\documentclass[reqno,12pt]{amsart}
\usepackage{geometry}
\usepackage{mathtools,amssymb,amsthm,mathrsfs,color,lineno,paralist,graphicx,float}
\usepackage[colorlinks,
linkcolor=blue,
anchorcolor=green,
citecolor=blue, 
]{hyperref}
\usepackage{tikz}
\usepackage{enumitem}
\usepackage[T1]{fontenc}

\usepackage{subfig} 
\usepackage[justification = centering, labelsep =period]{caption} 

\usepackage{calc}
\newtheorem{theorem}{Theorem}[section]
 
\newtheorem{lemma}{Lemma}[section]
\newtheorem{remark}{Remark}[section]

\allowdisplaybreaks[2]
\title[Polynomial Spectral Gap Estimates]{Polynomial Spectral Gap Estimates for Finitely Differentiable Quasi-Periodic Schr\"odinger Operators}

\author{Ao Cai}
\address{School of Mathematical Sciences, Soochow University, Suzhou, China}
\email{acai@suda.edu.cn}

\author{Yang Gao}
\address{School of Mathematical Sciences, Soochow University, Suzhou, China}
\email{glacier189@163.com}

\author{Jiawei He$^{*}$}
\thanks{$^{*}$Corresponding author.}
\address{Fujian Key Laboratory of Financial Information Processing, Putian University\\
Putian, Fujian 351100, P.R. China}
\email{hermit\_well@163.com}

\date{}

\begin{document}

\begin{abstract}
We prove polynomial upper bounds on spectral gaps of quasi-periodic Schr\"odinger operators with small $C^{k}$ potentials and Diophantine frequencies. For sufficiently large $k$, the decay exponent is $k-34\tau$, where $\tau$ is the Diophantine exponent. We construct examples with lower bounds along infinitely many gap labels, showing that the leading coefficient $1$ of $k$ is asymptotically optimal. The proof uses quantitative reducibility and the Moser-P\"oschel argument.
\end{abstract}

\maketitle

\section{Introduction}

\subsection{Motivation and  main results}
We consider the quasi-periodic Schr\"odinger operator on $\ell^{2}(\mathbb{Z})$ defined by
\begin{equation*}
    (H_{V,\alpha,\theta}u)_{n}=u_{n+1}+u_{n-1}+V(\theta+n\alpha)u_{n},\qquad n\in\mathbb{Z},
\end{equation*}
where $\theta\in\mathbb{T}^{d}\coloneqq\mathbb{R}^{d}/\mathbb{Z}^{d}$ is the phase, $\alpha\in\mathbb{R}^{d}$ is the frequency, and $V\in C^{k}(\mathbb{T}^{d},\mathbb{R})$ is the potential. Throughout the paper, we assume that $1,\alpha_{1},\ldots,\alpha_{d}$ are rationally independent. The operator $H_{V,\alpha,\theta}$ is bounded and self-adjoint, and its spectrum is a compact perfect subset of $\mathbb{R}$ independent of $\theta$. We denote this common spectrum by $\Sigma_{V,\alpha}$.

A fundamental example is the almost Mathieu operator, corresponding to $d=1$ and $V(\theta)=2\lambda\cos(2\pi\theta)$:
\begin{equation*}
    (H_{\lambda,\alpha,\theta}u)_{n}=u_{n+1}+u_{n-1}+2\lambda\cos\big(2\pi(\theta+n\alpha)\big)u_{n},\qquad n\in\mathbb{Z},
\end{equation*}
where $\lambda\in\mathbb{R}$ is the coupling constant. Quasi-periodic Schr\"odinger operators have attracted considerable attention because of their connections with quasicrystals and the quantum Hall effect, see \cite{AvilaA15,Avila2009TheTM,AvilaJ10,Artury21,Simonb00,HouY12,You18}. The
richness of its spectral theory, the abundance of unusual features, and its amenability to analysis also make it a crucial component of general
conjectures and counterexamples in mathematics.  For a systematic treatment of the spectral theory of one-dimensional ergodic Schr\"odinger operators, we refer to the two-volume monograph of Damanik and Fillman \cite{DamanikFillman22,DamanikFillman24}. 

In this paper, we study the sizes of spectral gaps for small finitely differentiable quasi-periodic potentials. Our aim is to determine how the polynomial gap-decay exponent depends on the differentiability order $k$. We address this question through a uniform upper bound and lower bounds for explicitly constructed potentials.
A spectral gap is a bounded connected component of $\mathbb{R}\setminus\Sigma_{V,\alpha}$. By the gap-labelling theorem \cite{DamanikF22}, each gap has a unique label $m\in\mathbb{Z}^{d}\setminus\{0\}$. We use the labelling convention recalled in \hyperref[pre]{Section~\ref*{pre}} and write
\begin{equation*}
    G_{m}(V)\coloneqq(E_{m}^{-},E_{m}^{+}),\qquad |G_{m}(V)|\coloneqq E_{m}^{+}-E_{m}^{-},
\end{equation*}
allowing $E_{m}^{-}=E_{m}^{+}$ when the gap is collapsed.
For $\gamma>0$ and $\tau>d$, we write $\alpha\in\mathrm{DC}_{d}(\gamma,\tau)$ if
\begin{equation*}
    \inf_{j\in\mathbb{Z}}|\langle m,\alpha\rangle-j|\geqslant\frac{\gamma}{|m|^{\tau}},\qquad m\in\mathbb{Z}^{d}\setminus\{0\},
\end{equation*}
where $|m|\coloneqq|m_{1}|+\cdots+|m_{d}|$.

We now state our main result.  

\begin{theorem}\label{GYGJMGYQ}
 Fix $\gamma>0$, $\tau>d$, $0<\sigma<\frac{1}{6}$, and an integer $k>(\frac{30}{\sigma}+2)\tau+2$. Let $\alpha\in\mathrm{DC}_{d}(\gamma,\tau)$. Choose $D=D(k,\tau,\sigma)$ such that
\begin{equation*}
    \max\bigg\{\frac{30}{\sigma},\frac{k-3}{\tau}-2\bigg\}<D<\frac{k-2}{\tau}-2.
\end{equation*} 
Then there exists $\varepsilon=\varepsilon(\sigma,\gamma,\tau,k,d)>0$ with the following properties.
\begin{enumerate}[label=(\arabic*)]
\item For every $V\in C^{k}(\mathbb{T}^{d},\mathbb{R})$ with $\|V\|_{k}\leqslant\varepsilon$ and every $m\in\mathbb{Z}^{d}\setminus\{0\}$,
\begin{equation*}
    |G_{m}(V)|\leqslant\varepsilon^{\frac{1}{6(D-5)}}|m|^{-(k-34\tau)}.
\end{equation*}

\item If $k>361\tau$, then for every $\delta\in(0,1)$ and $\varepsilon_{0}\in(0,\varepsilon)$ there exist $V\in C^{k}(\mathbb{T}^{d},\mathbb{R})$ with $\|V\|_{k}\leqslant\varepsilon_{0}$ and a sequence $m_{i}\in\mathbb{Z}^{d}\setminus\{0\}$ with $|m_{i}|\to\infty$ such that
\begin{equation*}
    |G_{m_{i}}(V)|\geqslant|m_{i}|^{-(1+\delta)k-6\tau}.
\end{equation*}
\end{enumerate}
\end{theorem}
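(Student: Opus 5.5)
The argument will go through the Schrödinger cocycle $(\alpha,S_E^V)$ with
\begin{equation*}
S_E^V(\theta)=\begin{pmatrix}E-V(\theta)&-1\\1&0\end{pmatrix}.
\end{equation*}
We use the standard correspondence: $E$ lies at an endpoint $E_m^\pm$ of the gap $G_m(V)$ exactly when the rotation number satisfies $2\rho(\alpha,S_E^V)=\langle m,\alpha\rangle \bmod \mathbb{Z}$. Everything will then follow from a quantitative reducibility scheme in finite regularity, which I set up first.

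\textbf{Step 1: analytic approximation and KAM.} Approximate $V$ by analytic potentials $V_j$ on shrinking strips $h_j=h_0\,\varepsilon_j^{1/D}$, with
\begin{equation*}
\|V-V_j\|_{h_j}\lesssim \|V\|_k\, h_j^{k},
\end{equation*}
in the spirit of Zehnder/Moser smoothing. The admissible window for $D$ in the statement is exactly what makes this approximation error summable against the KAM small divisors $|m|^{\tau}$ at each step.

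Next run a finite-step KAM (the almost-reducibility results recalled earlier in the paper). At step $j$ this produces a conjugacy $B_j$ with
\begin{equation*}
B_j(\theta+\alpha)^{-1}S_E^{V}B_j(\theta)=A_j e^{f_j(\theta)},\qquad \|f_j\|\leqslant \varepsilon_j,\qquad \varepsilon_{j+1}=\varepsilon_j^{1+\sigma}.
\end{equation*}
The resonances are controlled by $\|B_j\|\lesssim |n_j|^{\tau}$-type bounds, where $n_j$ is the resonant frequency removed at step $j$.

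\textbf{Step 2: upper bound (part (1)) via Moser--Pöschel.} Fix $m$ and $E$ in the closure of $G_m$. Choose the step $j$ with $|n_j|\sim |m|$. Along the KAM sequence, $2\rho$ equals $\langle m,\alpha\rangle$ modulo the sum of the removed resonances, so at this scale the rotation number of $A_j$ is nearly resonant. After conjugating by the rotation $R_{\langle n_j,\theta\rangle/2}$, which costs a factor $|m|^{\tau}$ each time it appears, we get a cocycle close to a parabolic or elliptic constant $A$ with small perturbation $\tilde f$.

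The Moser--Pöschel argument then bounds the gap length by the size of the off-diagonal resonant Fourier coefficient $\widehat{\tilde f}(m)$, up to $\|B_j\|^2$ factors. The finite regularity gives
\begin{equation*}
|\widehat V(m)|\lesssim \|V\|_k|m|^{-k},
\end{equation*}
and this decay must be transported through the conjugacies. Each loss is a power of $|m|^{\tau}$: from $\|B_j\|^2$, from the rotation, from the approximation error $h_j^{k}$, and from Cauchy estimates. Counting these losses yields $|m|^{-(k-34\tau)}$, and the prefactor $\varepsilon^{1/(6(D-5))}$ comes from interpolating the $\varepsilon_j$ along the chain.

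\textbf{Main obstacle.} I expect the hardest step to be bookkeeping the $\tau$-losses so that they total exactly $34\tau$, uniformly in $m$. In particular, the case where no resonance occurs up to scale $|m|$ must be handled separately: there the gap is controlled directly by $\varepsilon_j$ with $\varepsilon_j\lesssim|m|^{-k}$, by choosing $j$ from $|m|$.

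\textbf{Step 3: lower bound (part (2)).} Take a lacunary sparse potential
\begin{equation*}
V(\theta)=\sum_i c_i\,|m_i|^{-(1+\delta)k}\cos 2\pi\langle m_i,\theta\rangle,
\end{equation*}
with $|m_{i+1}|\gg |m_i|^{C}$ and small $c_i$. This $V$ lies in $C^k$ with $\|V\|_k\leqslant\varepsilon_0$, since $\delta>0$ makes $\sum_i |m_i|^{-\delta k}$ summable.

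For gap $m_i$, reduce first the contribution of the modes $m_\ell$ with $\ell<i$, where the KAM is analytic and effectively finite-dimensional. The modes with $\ell>i$ are of size at most $|m_{i+1}|^{-(1+\delta)k}$, which is negligible. At the gap $m_i$ we are then left with a perturbation whose resonant coefficient is
\begin{equation*}
\approx |m_i|^{-(1+\delta)k}
\end{equation*}
after the rotation conjugation. A first-order Moser--Pöschel lower bound (a gap opens of size comparable to the resonant coefficient divided by $\|B\|^2$) then gives
\begin{equation*}
|G_{m_i}|\geqslant |m_i|^{-(1+\delta)k-6\tau}.
\end{equation*}
The hypothesis $k>361\tau$ ensures that the second-order terms from the earlier modes cannot cancel the resonant coefficient.
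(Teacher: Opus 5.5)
\textbf{Part (1): the Moser--P\"oschel step is treated as lossless.} This is where the proposal has a genuine gap, at exactly the step you defer as bookkeeping. Your Step 2 assumes the Moser--P\"oschel argument bounds $|G_m(V)|$ linearly by the resonant coefficient, up to $\|B_j\|^2$ factors, so that only additive $\tau$-losses remain. In finite regularity the argument gives no such bound. What is available (Theorem~\ref{cw}) is this: if at $E_m^+$ the cocycle is \emph{exactly} conjugated by a $C^q$ map $B$ to $\begin{pmatrix}1&\zeta\\0&1\end{pmatrix}$, then $|G_m(V)|\leqslant\zeta^{1-t}$, provided $K^2\|B\|_q^6\zeta^{3t}\leqslant c_0$. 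For any fixed $t$ (the earlier argument used $t=1/18$), this costs a fixed fraction $tk$ of the exponent, and no count of $\tau$-losses removes it.

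The missing idea is to let $t$ depend on $k$, namely $t=(18\tau+8)/(k-7\tau)$. This is admissible only because two estimates hold at the gap edge.
\begin{itemize}
\item The shear satisfies $|\zeta|\leqslant\varepsilon^{1/(3D)}|m|^{-(k-4\tau-5/2)}$, with leading coefficient $1$. This comes from $|\nu|\lesssim|n^*_{l_g}|^{\tau}\varepsilon_{l_g}$ at the last resonant scale, where $|m|\asymp|n^*_{l_g}|$ and the window for $D$ pins $D\tau\in(k-2\tau-3,k-2\tau-2)$.
\item The conjugacy at a fixed regularity $q<4\tau+2$, independent of $k$, grows only like $\|B\|_q\leqslant C|m|^{2q+\tau}$.
\end{itemize}
Then $K^2\|B\|_q^6\zeta^{3t}\lesssim\varepsilon^{t/D}|m|^{12q+6\tau-3t(k-4\tau-5/2)}$ is small, because $3t(k-7\tau)=54\tau+24>12q+6\tau$. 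One also gets $(1-t)(k-4\tau-\frac52)>k-34\tau$. Your proposal has neither the $k$-dependent parameter nor a conjugacy bound at a $k$-independent regularity, so the $34\tau$ is asserted rather than derived.

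The hypothesis also requires an exact parabolic reduction, not a finite-step stop with a residual $\tilde f$. The paper gets it from the exact resonance. The accumulated degree reaches $m$ at a last resonant step, all later steps are nonresonant, and $B_{l_j}$ converges in $C^{\tilde k}$. Since $m\neq0$ forces at least one resonance, your separate ``no resonance up to scale $|m|$'' case never occurs.

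\textbf{Part (2): the key non-cancellation estimate is only asserted.} Your outline matches the paper's strategy: a lacunary sparse potential, KAM across shells, and the lower bound $|G_m|\geqslant c_1\zeta\|B\|_0^{-2}$ under $K^2\zeta\|B\|_{q_0}^4\leqslant c_0$. But the estimate that carries the argument is not proved, and the danger is misidentified. The issue is not cancellation by second-order terms from earlier modes. At the last resonance the relevant coefficient is
\[
\widehat T_J(n_J^*)=\frac{\langle\widetilde w_J\rangle+\widehat{\widetilde w}_J(2n_J)}{2|n_J|^{(1+\delta/2)k}},
\]
where $\widetilde w_J e^{-2\pi i\langle\widetilde n^{(J)},\theta\rangle}$ is the upper-right entry of $\operatorname{Ad}(P_JB^{(J)})W$. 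Nothing a priori keeps $\langle\widetilde w_J\rangle$ away from $0$ after several resonant conjugations.

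The paper handles this by tracking $\mathcal Z_j=|\langle w_j\rangle|$ against the high-frequency part $\mathcal X_j$ and the size $\mathcal M_j$ through every step. It proves $\mathcal Z_j\geqslant10\mathcal X_j+2^{-j}\mathcal Z_{j_0+1}$, seeded by $\mathcal Z_{j_0+1}\gtrsim\varepsilon_{l_{j_0}}^{1/7}$ from the commutator bound in Lemma~\ref{lemma2.1}. This is what yields $|b_\infty|\geqslant|n_J|^{-(1+\delta/2)k-3.5\tau}$.

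You also need three further ingredients to verify the smallness hypothesis of Lemma~\ref{r30}:
\begin{itemize}
\item an exact parabolic reduction at $E_{m_J}^+$;
\item the conjugacy bound $\|B\|_{q_0}\leqslant n^{q_0+6\tau/5}$;
\item the upper bound $\zeta\leqslant n^{122\tau-(1+\delta/2)k}$.
\end{itemize}
Finally, note the choice of coefficients. The paper uses $|n|^{-(1+\delta/2)k}$ rather than your $|m_i|^{-(1+\delta)k}$, which leaves a $\delta k/2$ margin to absorb the roughly $5.9\tau$ of losses. With your choice the margin is only $0.1\tau$.
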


Compared with the upper bound in \cite{CAI2021109035}, part (1) of \hyperref[GYGJMGYQ]{Theorem~\ref*{GYGJMGYQ}} improves the decay exponent from $k/9$ to $k-34\tau$. In particular, the loss in the decay exponent is bounded independently of $k$.
Part (2) constructs arbitrarily small $C^{k}$ potentials whose gap lengths satisfy polynomial lower bounds with exponent $(1+\delta)k+6\tau$ along an infinite sequence of labels, for every $\delta>0$. Together, the two parts determine the optimal leading coefficient of $k$ in uniform polynomial gap bounds as $k\to\infty$ with $\tau$ fixed.

\subsection{Reducibility}
Reducibility provides a dynamical approach to the spectral theory of quasi-periodic Schr\"odinger operators. In the analytic setting, Eliasson used a KAM scheme to prove reducibility for almost every energy for quasi-periodic Schr\"odinger equations with small potentials and Diophantine frequencies \cite{Eliasson92}. Puig obtained a nonperturbative extension in the one-frequency setting, with a smallness threshold independent of the Diophantine constants \cite{Puig06}. Avila and Jitomirskaya used quantitative Aubry duality to establish almost reducibility for Schr\"odinger cocycles with small analytic potentials and Diophantine frequencies \cite{AvilaJ10}. Hou and You proved almost reducibility for analytic  linear systems  on all irrational frequency and obtained reducibility under arithmetic conditions on the rotation number \cite{HouY12}.
 Avila's almost reducibility conjecture \cite{Artura10,Artura23} extends the local theory to the whole subcritical regime: every subcritical cocycle is almost reducible. For Schr\"odinger cocycles with trigonometric polynomial potentials and non-exponentially approximated frequencies, independent proofs were given by Ge \cite{Ge24} and Han and Schlag \cite{HanSchlag26}.

For finitely differentiable cocycles, quantitative almost reducibility and reducibility have been used to study spectral continuity, absolutely continuous spectrum, localization and transport, see \cite{cai2021absolutely,cai17,Cai2017ReducibilityOF,CaiLvWang2025,Shan25}. A main issue is the loss of regularity in the conjugacies. For Diophantine frequencies, Cai, Chavaudret, You and Zhou established quantitative $C^{k,k'}$ almost reducibility with $k'\leqslant k/6$ and used it to prove $\frac{1}{2}$-H\"older continuity of the Lyapunov exponent and the integrated density of states \cite{cai17}. Cai improved the retained regularity to $k'\leqslant k-2\tau-2$ and used the refined estimates to obtain purely absolutely continuous spectrum for sufficiently small finitely differentiable potentials \cite{cai2021absolutely}.

Under the additional assumption that the rotation number is Diophantine or rational with respect to the frequency, Cai and Ge proved reducibility for sufficiently small $C^{k}$ perturbations of constant cocycles, retaining $k'=\lfloor k/400\rfloor$ derivatives \cite{Cai2017ReducibilityOF}. Under the same condition on the rotation number, Cai, Lv and Wang established reducibility for $k>14\tau+2$ with $k'<k-10\tau-3$ \cite{CaiLvWang2025}. The loss of regularity can therefore be bounded independently of $k$.

Applications to localization require quantitative bounds for the conjugacies as well as control of their regularity. Shan developed estimates adapted to polynomial dynamical localization for finitely differentiable quasi-periodic long-range operators \cite{Shan25}. More recently, Cai, Lv, Shan and Wang established arithmetic polynomial spectral and dynamical localization through refined quantitative $C^{k}$ reducibility estimates \cite{cai2026arithmetic}. For results on reducibility, almost reducibility and their applications in other non-analytic settings, see \cite{Chengge21,Claire13,HouS19,WangYou26}.

\subsection{Spectral gaps}
Precise spectral gap estimates describe how gap lengths depend on their labels and help us understand the physical and mathematical properties of quantum systems. Upper bounds control the decay of gap lengths and are useful in the study of spectral homogeneity and inverse spectral problems \cite{CAI2021109035,zhaozhiyan}. Positive lower bounds show that the corresponding gaps are open.
Gap opening is also related to the topological structure of the spectrum. If the endpoints of open gaps are dense in the spectrum, the spectrum is a Cantor set, that is, a compact, perfect and nowhere dense set. For the almost Mathieu operator, the Ten Martini problem asks whether its spectrum is a Cantor set for every irrational frequency and every nonzero coupling. The stronger Dry Ten Martini problem asks whether every gap allowed by the gap-labelling theorem is open. For results on Cantor spectrum and the Ten Martini problems, see \cite{Avila2009TheTM,AvilaYZ,GeJitYou23,GeJitYou26,HeCheng2025,HHSY,HouS19,puig}.

For gap estimates, one needs quantitative bounds for the conjugacy and the reduced constant in terms of the gap label. 
The Moser-P\"oschel argument connects gap opening and gap estimates with quantitative reducibility \cite{Moser84,Puig06}. Leguil, You, Zhao and Zhou established exponential upper and lower bounds for the gaps of noncritical almost Mathieu operators with Diophantine frequencies \cite{zhaozhiyan}. For general sufficiently small analytic potentials with Diophantine frequencies, they also obtained exponential upper bounds with decay rates arbitrarily close to the width of analyticity. 
Liu and Shi obtained exponential upper bounds for the spectral gaps of weakly coupled analytic quasi-periodic Schr\"odinger operators with Liouville frequencies \cite{LiuS19}. 
In the finitely differentiable setting, Cai and Wang proved a polynomial upper bound with decay exponent $k/9$ for sufficiently small $C^{k}$ potentials with Diophantine frequencies \cite{CAI2021109035}.
For further results on spectral gaps and gap opening, see \cite{Krasovsky17,Moser84,Puig06,Shiyuan19}.

Explicit constructions provide a way to obtain quantitative lower bounds. In the continuous setting, He, Hou, Shan and You constructed quasi-periodic Schr\"odinger operators with analytic potentials and Cantor spectrum \cite{HHSY}. Hou, Shan and You gave a construction for Gevrey potentials based on KAM iterations and gap opening \cite{HouS19}. For finitely differentiable discrete operators, He and Cheng obtained polynomial upper and lower bounds along a selected set of gap labels by controlling the conjugacies and the nonzero parabolic shear in a quantitative reducibility scheme \cite{HeCheng2025}.

\subsection{Main ideas of the proof}
For the upper bound, the main task is to control the dependence of both the parabolic shear and the conjugacy on the gap label. We refine the quantitative estimates in the reducibility scheme so that the shear retains a decay exponent with leading coefficient $1$ in $k$. At the same time, the conjugacy at a fixed regularity grows with the gap label at a rate independent of $k$. These two estimates are needed together to obtain the improved gap bound.

A further point is the choice of the parameter in the Moser-P\"oschel argument. A fixed parameter would lose a fixed proportion of the decay exponent, even with the improved shear estimate. We instead let this parameter depend on $k$ and verify the corresponding smallness condition using the label-dependent conjugacy bound. This limits the loss in the gap-decay exponent to a quantity independent of $k$ and gives the exponent $k-34\tau$.

For the lower bound, we adapt the sparse Fourier construction in \cite{HeCheng2025}. The Fourier coefficients are chosen with a small additional decay to ensure that the potential belongs to $C^{k}$ and can have arbitrarily small norm. The main estimate shows that the selected resonant coefficient remains larger than the total error throughout the iteration. We also control the limiting conjugacy so that passing from the parabolic shear to the gap length introduces only an additive loss independent of $k$. The resulting lower bounds have leading coefficients arbitrarily close to $1$. Together with the uniform upper bound, they establish the optimal leading dependence of the gap-decay exponent on $k$.

\subsection{Organization of the paper}

Section~2 collects the notation and preliminary results. In Section~3, we establish quantitative reducibility, prove part (1) of \hyperref[GYGJMGYQ]{Theorem~\ref*{GYGJMGYQ}}, and derive the lower-gap estimate needed for part (2). Section~4 introduces the potential used for part (2) and the corresponding iteration. In Section~5, we deduce part (2) from a quantitative reducibility lemma at the selected gap endpoints and then prove that lemma.

\section{Preliminaries}\label{pre}

In this section, we introduce some basic notation and recall several auxiliary lemmas. For further details, see \cite{CAI2021109035,DamanikFillman22,DamanikFillman24,zhaozhiyan,You18}. We write $2\mathbb{T}^{d}=\mathbb{R}^{d}/(2\mathbb{Z})^{d}$ and use normalized Lebesgue measure on each torus.
For $h>0$, let
\begin{equation*}
\mathcal{S}_{h}{\coloneqq}\big\{\theta=(\theta_{1},\ldots,\theta_{d})\in\mathbb{C}^{d}:|\mathrm{Im}\,\theta_{i}|<h,\ i=1,\ldots,d\big\}.
\end{equation*}
For a scalar or matrix valued function $F$ on $\mathbb{T}^{d}$ admitting a holomorphic extension to $\mathcal{S}_{h}$, define
\begin{equation*}
|F|_{h}{\coloneqq}\sum_{n\in\mathbb{Z}^{d}}\|\widehat{F}(n)\|e^{{2\pi}h|n|}, \qquad \widehat{F}(n)\coloneqq\int_{[0,1]^{d}}F(\theta)e^{-2\pi\mathrm{i}\langle n,\theta\rangle}\,\mathrm{d}\theta.
\end{equation*}
where $|n|=|n_{1}|+\cdots+|n_{d}|.$ The corresponding spaces and Fourier norms on $2\mathbb{T}^{d}$ are defined analogously. Here $\|\cdot\|$ denotes the operator norm induced by the Euclidean norm for matrices and the absolute value for scalars. We denote by $C_{h}^{\omega}(\mathbb{T}^{d},\ast)$ the set of maps $F:\mathbb{T}^{d}\to\ast$ that admit a holomorphic extension to $\mathcal{S}_{h}$ and satisfy $|F|_{h}<\infty$, where $\ast$ usually denotes $\mathbb{R}$, $\mathrm{sl}(2,\mathbb{R})$, or $\mathrm{SL}(2,\mathbb{R})$. We also write
$C^{\omega}(\mathbb{T}^{d},\ast)=\bigcup_{h>0}C_{h}^{\omega}(\mathbb{T}^{d},\ast).$

For an integer $q\geqslant0$, we use the norm
\begin{equation*}
\|F\|_{q}\coloneqq\max_{|a|\leqslant q}\sup_{\theta\in\mathbb{T}^{d}}\|\partial^{a}F(\theta)\|,\qquad a\in\mathbb{Z}_{\geqslant0}^{d}.
\end{equation*}
For product estimates, we also use the equivalent submultiplicative norm
\begin{equation}\label{fn}
    \sum_{|a|\leqslant q}\frac{\|\partial^{a}F\|_{0}}{a!}, \qquad a!\coloneqq a_{1}!\cdots a_{d}!.
\end{equation}
Passing between these norms changes only constants depending on $q$ and $d.$

The same convention is used on $2\mathbb{T}^{d}$. For $N>0$, let
\begin{equation*}
\mathcal{T}_{N}F\coloneqq\sum_{|n|\leqslant N}\widehat{F}(n)e^{2\pi i\langle n,\theta\rangle},\qquad \mathcal{R}_{N}F\coloneqq F-\mathcal{T}_{N}F.
\end{equation*}

\subsection{\texorpdfstring{{Quasi-periodic cocycles and uniform hyperbolicity}}{Quasi-periodic cocycles and uniform hyperbolicity}}

Let $A\in C^{0}(\mathbb{T}^{d},\mathrm{SL}(2,\mathbb{C}))$. The quasi-periodic cocycle $(\alpha,A)$ is defined by
\begin{equation*}
\begin{aligned}
(\alpha,A):\mathbb{T}^{d}\times\mathbb{C}^{2}&\longrightarrow\mathbb{T}^{d}\times\mathbb{C}^{2},\\
(\theta,v)&\longmapsto(\theta+\alpha,A(\theta)v).
\end{aligned}
\end{equation*}
For $n\in\mathbb{Z}$, its iterates are given by $(\alpha,A)^{n}=(n\alpha,A_{n})$, where
\begin{equation*}
A_{n}(\theta)=
\begin{cases}
A(\theta+(n-1)\alpha)\cdots A(\theta+\alpha)A(\theta), & n\geqslant1,\\
I, & n=0,\\
A^{-1}(\theta+n\alpha)A^{-1}(\theta+(n+1)\alpha)\cdots A^{-1}(\theta-\alpha), & n<0.
\end{cases}
\end{equation*}

The cocycle $(\alpha,A)$ is called uniformly hyperbolic if there exist constants $C,c>0$ and a continuous splitting
$\mathbb{C}^{2}=E^{s}(\theta)\oplus E^{u}(\theta)$ into one-dimensional complex subspaces such that, for every $\theta\in\mathbb{T}^{d}$,
\begin{equation*}
\begin{aligned}
A(\theta)E^{s}(\theta)&=E^{s}(\theta+\alpha),\\
A(\theta)E^{u}(\theta)&=E^{u}(\theta+\alpha),
\end{aligned}
\end{equation*}
and for every integer $n\geqslant0$,
\begin{equation*}
\begin{aligned}
\|A_{n}(\theta)v\|&\leqslant Ce^{-cn}\|v\|, \qquad v\in E^{s}(\theta),\\
\|A_{-n}(\theta)v\|&\leqslant Ce^{-cn}\|v\|, \qquad v\in E^{u}(\theta).
\end{aligned}
\end{equation*}
An important class of $\mathrm{SL}(2,\mathbb{R})$ cocycles consists of the Schr\"odinger cocycles $(\alpha,S_{E}^{V})$. For $V\in C^{0}(\mathbb{T}^{d},\mathbb{R})$ and $E\in\mathbb{R}$, we define
\begin{equation*}
 S_{E}^{V}(\theta)=
\begin{pmatrix}
E-V(\theta) & -1\\
1 & 0
\end{pmatrix}.
\end{equation*}
These cocycles arise from the eigenvalue equation $H_{V,\alpha,\theta}u=Eu$. Indeed, any formal solution $u=(u_{n})_{n\in\mathbb{Z}}$ of $H_{V,\alpha,\theta}u=Eu$ satisfies
\begin{equation*}
\begin{pmatrix}
u_{n+1}\\
u_{n}
\end{pmatrix}
=S_{E}^{V}(\theta+n\alpha)
\begin{pmatrix}
u_{n}\\
u_{n-1}
\end{pmatrix},
\qquad n\in\mathbb{Z}.
\end{equation*}

A fundamental theorem of Johnson \cite{Johnson198654} states that, for every $E\in\mathbb{R}$,
\begin{equation*}
E\notin\Sigma_{V,\alpha}\quad\Longleftrightarrow\quad(\alpha,S_{E}^{V})\text{ is uniformly hyperbolic}.
\end{equation*}

\subsection{Conjugation and Reducibility}

Let $\ast\in\{\omega,k\}$ and $A_{1},A_{2}\in C^{\ast}(\mathbb{T}^{d},\allowbreak\mathrm{SL}(2,\mathbb{R}))$. We call $(\alpha,A_{1})$ and $(\alpha,A_{2})$ $C^{\ast}$ conjugate if there exists $Z\in C^{\ast}(2\mathbb{T}^{d},\mathrm{SL}(2,\mathbb{R}))$ such that
\begin{equation*}
    Z(\theta+\alpha)^{-1}A_{1}(\theta)Z(\theta)=A_{2}(\theta), \qquad \theta\in2\mathbb{T}^{d}.
\end{equation*}

Let $0<h'\leqslant h$ and $A\in C_{h}^{\omega}(\mathbb{T}^{d},\mathrm{SL}(2,\mathbb{R}))$. We call $(\alpha,A)$ $C_{h,h'}^{\omega}$ reducible if it admits a conjugacy $Z\in C_{h'}^{\omega}(2\mathbb{T}^{d},\mathrm{SL}(2,\mathbb{R}))$ to a constant cocycle.

Let $0\leqslant k'\leqslant k$ be integers and $A\in C^{k}(\mathbb{T}^{d},\mathrm{SL}(2,\mathbb{R}))$. We call $(\alpha,A)$ $C^{k,k'}$ almost reducible if the closure of its $C^{k'}$ conjugacy class in $C^{k'}(\mathbb{T}^{d},\mathrm{SL}(2,\mathbb{R}))$ contains a constant cocycle. We call it $C^{k,k'}$ reducible if its $C^{k'}$ conjugacy class contains a constant cocycle.

\subsection{Fibered rotation number}
Assume that $A\in C^{0}(\mathbb{T}^{d},\mathrm{SL}(2,\mathbb{R}))$ is homotopic to the identity. It induces the circle skew-product $F_{A}:\mathbb{T}^{d}\times\mathbb{S}^{1}\to\mathbb{T}^{d}\times\mathbb{S}^{1}$ defined by
\begin{equation*}
F_{A}(\theta,\omega)=\bigg(\theta+\alpha,\frac{A(\theta)\omega}{\|A(\theta)\omega\|}\bigg).
\end{equation*}
We identify $\mathbb{S}^{1}$ with ${\mathbb{R}/\mathbb{Z}}$ through the parametrization ${y\mapsto(\cos(2\pi y),\sin(2\pi y))}$. Since $F_{A}$ is homotopic to the identity, it admits a lift $\widetilde{F}_{A}:\mathbb{T}^{d}\times\mathbb{R}\to\mathbb{T}^{d}\times\mathbb{R}$ of the form
\begin{equation*}
\widetilde{F}_{A}(\theta,y)=(\theta+\alpha,y+\psi(\theta,y)),
\end{equation*}
where $\psi$ is continuous and $1$-periodic in $y$. Let $\mu$ be an $F_{A}$-invariant probability measure on $\mathbb{T}^{d}\times\mathbb{S}^{1}$ whose projection onto $\mathbb{T}^{d}$ is the normalized Lebesgue measure ${\mathrm{d}\theta}$. The fibered rotation number of $(\alpha,A)$ is defined by
\begin{equation*}
\rho(\alpha,A)=\int_{\mathbb{T}^{d}\times\mathbb{S}^{1}}\psi(\theta,y)\,\mathrm{d}\mu(\theta,y)\pmod{\mathbb{Z}}.
\end{equation*}
This defines $\rho(\alpha,A)\in\mathbb{R}/\mathbb{Z}$ independently of the lift and the invariant measure, see \cite{CAI2021109035,zhaozhiyan}.

For $\phi\in\mathbb{R}$, let
\begin{equation*}
R_{\phi}=
\begin{pmatrix}
\cos(2\pi\phi) & -\sin(2\pi\phi)\\
\sin(2\pi\phi) & \cos(2\pi\phi)
\end{pmatrix}.
\end{equation*}
For $B\in C^{0}(2\mathbb{T}^{d},\mathrm{SL}(2,\mathbb{R}))$, we write $\deg B=n\in\mathbb{Z}^{d}$ if $B$ is homotopic to $\theta\mapsto R_{\langle n,\theta\rangle/2}$. Suppose that $A_{1},A_{2}\in C^{0}(\mathbb{T}^{d},\mathrm{SL}(2,\mathbb{R}))$ are homotopic to the identity and satisfy
$B(\theta+\alpha)^{-1}A_{1}(\theta)B(\theta)=A_{2}(\theta)$
for some $B\in C^{0}(2\mathbb{T}^{d},\mathrm{SL}(2,\mathbb{R}))$. Then
\begin{equation*}
\rho(\alpha,A_{2})=\rho(\alpha,A_{1})-\frac{\langle\deg B,\alpha\rangle}{2}\pmod{\mathbb{Z}}.
\end{equation*}
In particular, the fibered rotation number is invariant under real conjugacies homotopic to the identity. Moreover, for two such maps $A$ and $B$, the degree satisfies
\begin{equation*}
 \deg(AB)=\deg A+\deg B.
\end{equation*}

For a cocycle $(\alpha,Ae^{f})$ homotopic to the identity, where $A\in\mathrm{SL}(2,\mathbb{R})$ is constant and $f\in C^{0}(\mathbb{T}^{d},\mathrm{sl}(2,\mathbb{R}))$ is sufficiently small, we have
\begin{equation}\label{rho_holder}
    \|\rho(\alpha,Ae^{f})-\rho(\alpha,A)\|_{\mathbb{R}/\mathbb{Z}}\leqslant\tilde{c}\|f\|_{0}^{\frac{1}{2}},
\end{equation}
where $\tilde{c}=\tilde{c}(\|A\|)$.

\subsection{Integrated Density of States and Spectral gap}
Let $\mu_{V,\alpha,\theta}^{\delta_{0}}$ be the spectral measure of $H_{V,\alpha,\theta}$ associated with the unit vector $\delta_{0}$. The integrated density of states (IDS) is defined by
\begin{equation*}
    N_{V,\alpha}(E)\coloneqq\int_{\mathbb{T}^{d}}\mu_{V,\alpha,\theta}^{\delta_{0}}\big((-\infty,E]\big)\,\mathrm{d}\theta,
\end{equation*}
where $\mathrm{d}\theta$ denotes the normalized Haar measure on $\mathbb{T}^{d}$, see \cite{AVRON1983369,DamanikF22}. Equivalently, the IDS is the limiting distribution of eigenvalues of finite restrictions. For $L\in\mathbb{N}$, let $H_{V,\alpha,\theta}^{L}$ be the restriction of $H_{V,\alpha,\theta}$ to $[-L,L]\cap\mathbb{Z}$ with Dirichlet boundary conditions $u_{-L-1}=u_{L+1}=0$, and set
\begin{equation*}
    N_{V,\alpha,\theta}^{L}(E)\coloneqq\frac{1}{2L+1}\#\big\{x\leqslant E:x\text{ is an eigenvalue of }H_{V,\alpha,\theta}^{L}\big\},
\end{equation*}
where eigenvalues are counted with multiplicity. Then $N_{V,\alpha}(E)=\lim_{L\to\infty}N_{V,\alpha,\theta}^{L}(E)$ for every $E\in\mathbb{R}$ and every $\theta\in\mathbb{T}^{d}$, see \cite{AVRON1983369}. The Schr\"odinger rotation number $\rho(\alpha,S_{E}^{V})\in[0,\frac{1}{2}]$ satisfies
$ N_{V,\alpha}(E)=1-2\rho(\alpha,S_{E}^{V}),$
see \cite{zhaozhiyan,You18}.

The IDS is constant on each spectral gap. By the gap-labelling theorem \cite{DamanikF22}, each bounded spectral gap has a unique label $m\in\mathbb{Z}^{d}\setminus\{0\}$ satisfying
\begin{equation*}
    2\rho(\alpha,S_{E}^{V})\equiv\langle m,\alpha\rangle\pmod{\mathbb{Z}}
\end{equation*}
for every $E$ in the gap. The IDS is continuous, and its associated measure has support $\Sigma_{V,\alpha}$, see \cite{AVRON1983369,DamanikF22}. 
Consequently, for every $m\in\mathbb{Z}^{d}\setminus\{0\}$, the level set
\begin{equation*}
  [E_{m}^{-},E_{m}^{+}]=\big\{E\in\mathbb{R}:N_{V,\alpha}(E)=1-\{\langle m,\alpha\rangle\}\big\}
\end{equation*}
is a nonempty compact interval with endpoints $E_{m}^{-},E_{m}^{+}\in\Sigma_{V,\alpha}$. Here $\{x\}\coloneqq x-\lfloor x\rfloor$ denotes the fractional part of $x$. The case $E_{m}^{-}=E_{m}^{+}$ corresponds to a collapsed gap.

\subsection{Analytic Approximation}

Let $k>\frac{d}{2}$ be an integer and $f\in C^{k}(\mathbb{T}^{d},\allowbreak\mathrm{sl}(2,\mathbb{R}))$. Using the analytic smoothing construction of Zehnder \cite{Zehnder75}, together with Fourier estimates, we can choose a sequence of trigonometric polynomials $f_{j}\in C_{{\frac{1}{j}}}^{\omega}(\mathbb{T}^{d},\mathrm{sl}(2,\mathbb{R}))$, $j\geqslant1$, and a constant $C'=C'(k,d)>0$ such that
\begin{equation}
\label{r2}
\begin{aligned}
\|f_{j}-f\|_{k}&\longrightarrow0 \quad \text{as }j\longrightarrow\infty,\\
|f_{j}|_{{\frac{1}{j}}}&\leqslant C'\|f\|_{k},\\
|f_{j+1}-f_{j}|_{{\frac{1}{j+1}}}&\leqslant C'j^{-k-1+\frac{d}{2}}\|f\|_{k}.
\end{aligned}
\end{equation}
The approximants are obtained by convolution with a fixed family of smoothing kernels whose Fourier transforms are smooth and compactly supported. These kernels are independent of the regularity index $k$. Consequently, if $f\in C^{\tilde{k}}(\mathbb{T}^{d},\mathrm{sl}(2,\mathbb{R}))$ for an integer $\tilde{k}\geqslant k$, the same sequence satisfies \eqref{r2} with $k$ replaced by $\tilde{k}$ and with a constant depending on $\tilde{k}$ and $d$.

\subsection{Auxiliary lemmas}

Recall that the Lie algebras $\mathrm{sl}(2,\mathbb{R})$ and $\mathrm{su}(1,1)$ are isomorphic under the map $C\mapsto MCM^{-1}$, where
\begin{equation*}
M=\frac{1}{1+\mathrm{i}}
\begin{pmatrix}
1 & -\mathrm{i}\\
1 & \mathrm{i}
\end{pmatrix}.
\end{equation*}
The matrix $M$ is unitary, so conjugation by $M$ preserves the Euclidean operator norm. With this choice,
\begin{equation*}
MR_{\phi}M^{-1}=\operatorname{diag}(e^{-{2\pi}\mathrm{i}\phi},e^{{2\pi}\mathrm{i}\phi}).
\end{equation*}
We use $\mathfrak{B}_{h}(\mathbb{T}^{d},\ast)$ for the class of $\ast$-valued functions with finite Fourier weighted norm $|\cdot|_{h}$. For $B\in\mathfrak{B}_{h}(\mathbb{T}^{d},\mathrm{SU}(1,1))$, define
\begin{equation*}
\operatorname{Ad}(B)W=BWB^{-1}, \qquad W\in\mathfrak{B}_{h}(\mathbb{T}^{d},\mathrm{su}(1,1)).
\end{equation*}
On the real torus, write
\begin{equation}
\label{W} W=
\begin{pmatrix}
\mathrm{i}u & w\\
\overline{w} & -\mathrm{i}u
\end{pmatrix},
\end{equation}
where $u$ is real-valued and $w$ is complex-valued. Since conjugation by $B$ preserves $\mathrm{su}(1,1)$, we can write
\begin{equation}
\label{r3} W_{+}=BWB^{-1}=
\begin{pmatrix}
\mathrm{i}u_{+} & w_{+}\\
\overline{w_{+}} & -\mathrm{i}u_{+}
\end{pmatrix}.
\end{equation}
Angle brackets denote the normalized torus average, so $\langle W\rangle=\widehat{W}(0)$, and $[C,W]=CW-WC$ denotes the commutator.

\begin{lemma}\label{lemma2.1}
Let $C,P\in\mathrm{SU}(1,1)$ with $PCP^{-1}=\operatorname{diag}(e^{2\pi i\rho},e^{-2\pi i\rho})$ and $\sin(2\pi\rho)\neq0$. Write $W$ and $PWP^{-1}$ as in \eqref{W} and \eqref{r3}. Then
\begin{equation*}
|\langle w_{+}\rangle|\geqslant\frac{\|[C,\langle W\rangle]\|}{2|\sin(2\pi\rho)|\|P\|^{2}}.
\end{equation*}
If $P_{11}=a$, then for every $n\in\mathbb{Z}^{d}$,
\begin{equation*}
\begin{aligned}
|\widehat{w}_{+}(n)|&\geqslant |a|^{2}\max\bigg\{|\widehat{w}(n)|-3\max\{|\widehat{w}(-n)|,|\widehat{u}(n)|\},0\bigg\},\\
|a|^{2}&=\frac{(\|P\|+\|P\|^{-1})^{2}}{4}.
\end{aligned}
\end{equation*}
\end{lemma}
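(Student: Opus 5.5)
The plan is to work directly with the matrix entries of $P$, using the standard parametrization of $\mathrm{SU}(1,1)$. I write
\begin{equation*}
P=\begin{pmatrix} a & b\\ \overline{b} & \overline{a}\end{pmatrix},\qquad |a|^{2}-|b|^{2}=1,\qquad P^{-1}=\begin{pmatrix} \overline{a} & -b\\ -\overline{b} & a\end{pmatrix}.
\end{equation*}
First I record the norm identity. The singular values of $P$ are $|a|\pm|b|$; this follows from computing $P^{*}P$, or from writing $P$ as a product of diagonal unitaries and a real hyperbolic matrix. Hence $\|P\|=|a|+|b|$ and $\|P\|^{-1}=|a|-|b|$, which gives $|a|^{2}=(\|P\|+\|P\|^{-1})^{2}/4$. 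Since $P\in\mathrm{SL}(2,\mathbb{C})$, we also have $\|P^{-1}\|=\|P\|$.

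For the first inequality, set $D=\operatorname{diag}(e^{2\pi i\rho},e^{-2\pi i\rho})$, so that $C=P^{-1}DP$. Then
\begin{equation*}
[C,\langle W\rangle]=P^{-1}[D,P\langle W\rangle P^{-1}]P .
\end{equation*}
Averaging is linear and $P$ is constant, so $P\langle W\rangle P^{-1}=\langle PWP^{-1}\rangle$. This matrix has the form \eqref{r3} with off-diagonal entries $\langle w_{+}\rangle$ and $\overline{\langle w_{+}\rangle}$. Commuting with the diagonal matrix $D$ kills the diagonal and multiplies the off-diagonal entries by $\pm(e^{2\pi i\rho}-e^{-2\pi i\rho})=\pm2i\sin(2\pi\rho)$. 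Therefore
\begin{equation*}
\|[D,\langle PWP^{-1}\rangle]\|=2|\sin(2\pi\rho)|\,|\langle w_{+}\rangle|,
\end{equation*}
and consequently
\begin{equation*}
\|[C,\langle W\rangle]\|\leqslant\|P^{-1}\|\,\|P\|\cdot 2|\sin(2\pi\rho)|\,|\langle w_{+}\rangle|=2|\sin(2\pi\rho)|\,\|P\|^{2}|\langle w_{+}\rangle|.
\end{equation*}
Rearranging gives the first inequality.

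For the Fourier bound, I compute the $(1,2)$ entry of $PWP^{-1}$ pointwise. A direct multiplication gives
\begin{equation*}
w_{+}=a^{2}w-b^{2}\overline{w}-2iab\,u .
\end{equation*}
I then take Fourier coefficients, using $\widehat{\overline{w}}(n)=\overline{\widehat{w}(-n)}$. Together with $|b|^{2}\leqslant|a|^{2}$ and $|ab|\leqslant|a|^{2}$, the triangle inequality yields
\begin{equation*}
|\widehat{w}_{+}(n)|\geqslant|a|^{2}\big(|\widehat{w}(n)|-|\widehat{w}(-n)|-2|\widehat{u}(n)|\big)\geqslant|a|^{2}\big(|\widehat{w}(n)|-3\max\{|\widehat{w}(-n)|,|\widehat{u}(n)|\}\big).
\end{equation*}
Since $|\widehat{w}_{+}(n)|\geqslant0$ trivially, the bound holds with the maximum with $0$ inserted.

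There is no real obstacle here; the only step needing care is the entry computation of $PWP^{-1}$ together with the singular-value identity $\|P\|=|a|+|b|$.
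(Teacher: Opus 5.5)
Your proposal is correct and follows essentially the same route as the paper: the entry formula $w_{+}=a^{2}w-b^{2}\overline{w}-2iab\,u$ together with $|b|\leqslant|a|$ and the triangle inequality for the Fourier bound, and the identity $\|P[C,\langle W\rangle]P^{-1}\|=2|\sin(2\pi\rho)|\,|\langle w_{+}\rangle|$ combined with $\|P^{-1}\|=\|P\|$ for the first estimate. You also supply the singular-value computation $\|P\|=|a|+|b|$, $\|P\|^{-1}=|a|-|b|$ behind the formula for $|a|^{2}$, which the paper states without proof.
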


\begin{proof}
Write $P=\begin{pmatrix}a&b\\\overline{b}&\overline{a}\end{pmatrix}$. Direct multiplication gives $w_{+}=a^{2}w-b^{2}\overline{w}-2iab u$. Since $|b|\leqslant|a|$, the second estimate follows from the triangle inequality and $|\widehat{w}_{+}(n)|\geqslant0$. Moreover,
\begin{equation*}
\|P[C,\langle W\rangle]P^{-1}\|=2|\sin(2\pi\rho)||\langle w_{+}\rangle|,
\end{equation*}
which proves the first estimate.
\end{proof}

\begin{lemma}\cite{HouY12}\label{lemma2.2}
Let $W \in \mathfrak{B}_{h}(\mathbb{T}^{d},\mathrm{su}(1,1))$ and $B \in \mathfrak{B}_{h}(\mathbb{T}^{d}, PSU(1, 1))$ satisfying $|B-I|_{h} \leqslant \frac{1}{2}$. Write $W_{+} = B^{-1}WB$ in the form \eqref{r3}. Then
\begin{equation*}
|\widehat{w}_{+}(k)| \geqslant |\widehat{w}(k)| - 4 |B-I|_{h} |W|_{h} e^{-{2\pi}|k|h}, \quad \text{for all } k \in \mathbb{Z}^{d}.
\end{equation*}
\end{lemma}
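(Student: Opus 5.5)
The plan is to prove the statement from the explicit structure of the adjoint action of $\mathrm{SU}(1,1)$ elements near the identity, using only the Fourier-weighted norm $|\cdot|_{h}$ and its Banach algebra property. Write
\begin{equation*}
B=I+Y,\qquad |Y|_{h}\leqslant\tfrac12 .
\end{equation*}
Since $B$ takes values in $PSU(1,1)$, it has determinant one. Its inverse is therefore given by the adjugate formula, and $B^{-1}=I+Y'$ with $|Y'|_{h}=|Y|_{h}$, because the adjugate merely permutes entries and changes signs. In particular $|B^{-1}-I|_{h}=|B-I|_{h}$. (For a $PSU(1,1)$-valued $B$ we lift locally; the sign ambiguity cancels in $B^{-1}WB$.)

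The key step is to expand
\begin{equation*}
W_{+}=B^{-1}WB=W+Y'W+WY+Y'WY,
\end{equation*}
and to control the error $E\coloneqq W_{+}-W$ in the $|\cdot|_{h}$ norm. Submultiplicativity of $|\cdot|_{h}$ (the weight $e^{2\pi h|n|}$ is submultiplicative and the matrix norm is submultiplicative) gives
\begin{equation*}
|E|_{h}\leqslant\big(2|Y|_{h}+|Y|_{h}^{2}\big)|W|_{h}\leqslant\tfrac52|Y|_{h}|W|_{h}\leqslant4|B-I|_{h}|W|_{h},
\end{equation*}
where the middle inequality uses $|Y|_{h}\leqslant\frac12$.

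Next we extract Fourier coefficients. The entry $w_{+}$ is the $(1,2)$ entry of $W_{+}$, so
\begin{equation*}
\widehat{w}_{+}(k)=\widehat{w}(k)+\widehat{E}_{12}(k).
\end{equation*}
The $(1,2)$ entry of a matrix has absolute value at most the operator norm of that matrix. Hence
\begin{equation*}
|\widehat{E}_{12}(k)|\leqslant\|\widehat{E}(k)\|\leqslant|E|_{h}e^{-2\pi h|k|},
\end{equation*}
by the definition of $|\cdot|_{h}$ as a weighted sum of the coefficient norms. The triangle inequality then yields the claimed bound
\begin{equation*}
|\widehat{w}_{+}(k)|\geqslant|\widehat{w}(k)|-4|B-I|_{h}|W|_{h}e^{-2\pi|k|h}.
\end{equation*}

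I expect no real obstacle; the only point requiring care is the constant. It relies on the identity $|B^{-1}-I|_{h}=|B-I|_{h}$, which comes from the determinant-one adjugate formula, together with the smallness $|B-I|_{h}\leqslant\frac12$ to absorb the quadratic term. If that identity were unavailable, I would instead bound $B^{-1}-I$ by a Neumann series, which gives $|B^{-1}-I|_{h}\leqslant2|Y|_{h}$. This still yields a constant of order $4$ after a slightly more careful accounting, or the stated constant if the cross term is combined as $Y'W(I+Y)$.
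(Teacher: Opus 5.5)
Your proof is correct and complete. The paper gives no argument for this lemma and simply imports it from \cite{HouY12}. Your expansion $B^{-1}WB-W=Y'W+WY+Y'WY$, combined with the Banach-algebra property of $|\cdot|_{h}$ and the coefficient bound $|\widehat{E}_{12}(k)|\leqslant\|\widehat{E}(k)\|\leqslant|E|_{h}e^{-2\pi|k|h}$, is the standard direct proof. It even gives the constant $\frac52$ instead of $4$, and the Neumann-series variant you mention gives exactly $4$.

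One point of phrasing should be fixed. The identity $|B^{-1}-I|_{h}=|B-I|_{h}$ holds because $\operatorname{adj}X=JX^{T}J^{-1}$ with $J=\begin{pmatrix}0&1\\-1&0\end{pmatrix}$ orthogonal, so the adjugate is an isometry for the operator norm. It does not follow merely because entries are permuted and signs changed: a general permutation of entries, or a general sign change, does not preserve the operator norm.
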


\section{Dynamical estimates of \texorpdfstring{$C^{k}$}{Ck} quasi-periodic cocycle}

We first establish quantitative almost reducibility for $C^{k}$ cocycles. For cocycles that are not uniformly hyperbolic and whose rotation number is rational with respect to $\alpha$, we then obtain a quantitative parabolic reduction. Finally, we apply these estimates to bound the spectral gaps. Throughout this section, let $\alpha\in\mathrm{DC}_{d}(\gamma,\tau)$ and $A\in\mathrm{SL}(2,\mathbb{R})$. Conjugacy identities in the normal-form arguments are understood in $\mathrm{PSL}(2,\mathbb{R})$. Fixed numerical factors are absorbed into the constants. We write $\|x\|_{\mathbb{R}/\mathbb{Z}}\coloneqq\operatorname{dist}(x,\mathbb{Z})$.

\subsection{Choice of parameters and Quantitative reducibility}

Fix $0<\sigma<\frac{1}{6}$ and an integer $k>\big(\frac{30}{\sigma}+2\big)\tau+2$. Choose $D=D(k,\tau,\sigma)$ so that
\begin{equation}\label{choice-D-k}
    \max\bigg\{\frac{30}{\sigma},\frac{k-3}{\tau}-2\bigg\}<D<\frac{k-2}{\tau}-2.
\end{equation}
This interval is nonempty, and $(D+2)\tau+2<k<(D+2)\tau+3$. The same $D$ is used in \hyperref[GYGJMGYQ]{Theorem~\ref*{GYGJMGYQ}}(1).

Set $s\coloneqq\frac{1}{6D\tau+3}$. Let $c>0$ and $\tilde{D}>0$ be the constants in \hyperref[akam]{Theorem~\ref*{akam}}. For sufficiently  large integer $l_{1},$  define
\begin{equation}\label{sc}
\begin{aligned}
    &l_{j+1}\coloneqq\lceil l_{j}^{1+s}\rceil, \qquad r_{j}\coloneqq\frac{1}{2\pi l_{j}},\\
    &\varepsilon_{l_{j}}\coloneqq\frac{c}{(2\|A\|)^{\tilde{D}}l_{j}^{D\tau+\frac{1}{2}}}, \qquad N_{l_{j}}\coloneqq\frac{2|\ln\varepsilon_{l_{j}}|}{r_{j}-r_{j+1}}.
\end{aligned}
\end{equation}
The analytic approximations from Section 2.5 satisfy $\|f_{j}-f\|_{k}\to0$ and
\begin{equation}\label{sm}
\begin{aligned}
    \sup_{\theta\in\mathcal{S}_{1/(2\pi j)}}\|f_{j}(\theta)\|&\leqslant C'\|f\|_{k},\\
    \sup_{\theta\in\mathcal{S}_{1/(2\pi(j+1))}}\|f_{j+1}(\theta)-f_{j}(\theta)\|&\leqslant C'j^{-k}\|f\|_{k}.
\end{aligned}
\end{equation}
Here $C'\geqslant1$ depends only on $k,d$ and includes the constant in the telescoping estimate.

We recall the analytic KAM step used below.
\begin{theorem}[{\cite[Theorem B.1]{cai2026arithmetic}}]\label{akam}
Let $\alpha\in\mathrm{DC}_{d}(\gamma,\tau)$, $0<\gamma<1$, $\tau>d$, $0<\sigma<\frac{1}{6}$, $D>\frac{2}{\sigma}$, and $0<r'<r<1$. Set $N\coloneqq\frac{2|\ln\varepsilon|}{r-r'}$.There are constants $c=c(\gamma,\tau,d,\sigma,D)>0$ and $\tilde{D}=\tilde{D}(D)>0$ such that, if $A\in\mathrm{SL}(2,\mathbb{R})$ and $f:\mathbb{T}^{d}\to\mathrm{sl}(2,\mathbb{R})$ is holomorphic on $\mathcal{S}_{r}$ and satisfies, for some $0<\varepsilon<1$,
\begin{equation}\label{ks}
    \sup_{\theta\in\mathcal{S}_{r}}\|f(\theta)\|\leqslant\varepsilon\leqslant\frac{c}{\|A\|^{\tilde{D}}}(r-r')^{D\tau},
\end{equation}
there exist an analytic map $B:2\mathbb{T}^{d}\to\mathrm{SL}(2,\mathbb{R})$, a constant $A_{+}\in\mathrm{SL}(2,\mathbb{R})$, and an analytic map $f_{+}:\mathbb{T}^{d}\to\mathrm{sl}(2,\mathbb{R})$, holomorphic on $\mathcal{S}_{r'}$, such that
\begin{equation}\label{kc}
    B(\theta+\alpha)^{-1}Ae^{f(\theta)}B(\theta)=A_{+}e^{f_{+}(\theta)}.
\end{equation}

\begin{enumerate}
\item[(i)] If $\|2\rho(\alpha,A)-\langle n,\alpha\rangle\|_{\mathbb{R}/\mathbb{Z}}\geqslant\varepsilon^{\sigma}$ for every $n\in\mathbb{Z}^{d}$ with $0<|n|\leqslant N$, then
\begin{equation}\label{kn}
\begin{aligned}
    \sup_{\theta\in\mathcal{S}_{r'}}\|B(\theta)-I\|&\leqslant\varepsilon^{1-2\sigma-\frac{3}{D}},\\
    \sup_{\theta\in\mathcal{S}_{r'}}\|f_{+}(\theta)\|&\leqslant\varepsilon^{2-2\sigma-\frac{3}{D}}, \qquad \|A_{+}-A\|\leqslant2\|A\|\varepsilon.
\end{aligned}
\end{equation}

\item[(ii)] If $\|2\rho(\alpha,A)-\langle n^{*},\alpha\rangle\|_{\mathbb{R}/\mathbb{Z}}<\varepsilon^{\sigma}$ for some $n^{*}\in\mathbb{Z}^{d}$ with $0<|n^{*}|\leqslant N$, then there exist $P\in\mathrm{SL}(2,\mathbb{R})$ and $Y:\mathbb{T}^{d}\to\mathrm{sl}(2,\mathbb{R})$, holomorphic on $\mathcal{S}_{r'}$, such that
\begin{equation}\label{kp}
\begin{aligned}
    &B(\theta)=P^{-1}e^{Y(\theta)}R_{\langle n^{*},\theta\rangle/2}, \qquad \|P\|\leqslant4\|A\|^{\frac{1}{2}}\gamma^{-\frac{1}{2}}|n^{*}|^{\frac{\tau}{2}},\\
    &\sup_{\theta\in\mathcal{S}_{r'}}\|Y(\theta)\|\leqslant4\|A\|^{\frac{1}{2}}\gamma^{-\frac{1}{2}}|n^{*}|^{\frac{\tau}{2}}\varepsilon^{\frac{1}{2}}.
\end{aligned}
\end{equation}
Moreover,
\begin{equation}\label{kr}
\begin{aligned}
    \sup_{\theta\in\mathcal{S}_{r'}}\|B(\theta)\|&\leqslant8\|A\|^{\frac{1}{2}}\gamma^{-\frac{1}{2}}|n^{*}|^{\frac{\tau}{2}}e^{\pi|n^{*}|r'},\\
    \|B\|_{0}&\leqslant8\|A\|^{\frac{1}{2}}\gamma^{-\frac{1}{2}}|n^{*}|^{\frac{\tau}{2}}, \qquad \sup_{\theta\in\mathcal{S}_{r'}}\|f_{+}(\theta)\|\leqslant\varepsilon^{100}.
\end{aligned}
\end{equation}
There exist $t\in\mathbb{R}$ and $\nu\in\mathbb{C}$ such that
\begin{equation}\label{kf}
    A_{+}=M^{-1}\exp\begin{pmatrix}it&\nu\\\overline{\nu}&-it\end{pmatrix}M,
\end{equation}
with
\begin{equation}\label{kt}
    |t|\leqslant4\varepsilon^{\sigma}, \qquad |\nu|\leqslant64\|A\|\gamma^{-1}|n^{*}|^{\tau}\varepsilon e^{-2\pi|n^{*}|r}.
\end{equation}
\end{enumerate}
\end{theorem}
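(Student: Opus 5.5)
The approach is a single KAM step with resonance cancellation, in the style of Hou--You \cite{HouY12} and its finitely differentiable adaptations \cite{Cai2017ReducibilityOF,CaiLvWang2025}. First I would normalize the constant part. If $A$ is hyperbolic or parabolic, or elliptic with $\|2\rho(\alpha,A)\|_{\mathbb{R}/\mathbb{Z}}$ not small, conjugate $A$ by a constant $P_{0}$ to its normal form. In the elliptic case, pass through $M$ so that $MP_{0}AP_{0}^{-1}M^{-1}=\operatorname{diag}(e^{2\pi\mathrm{i}\rho},e^{-2\pi\mathrm{i}\rho})$. The cost is $\|P_{0}\|\lesssim\|A\|^{1/2}|\sin 2\pi\rho|^{-1/2}$; this is where the powers $\|A\|^{\tilde D}$ in \eqref{ks} are spent. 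Next, split the Fourier modes of $Mf M^{-1}=\begin{pmatrix}\mathrm{i}u&w\\ \overline w&-\mathrm{i}u\end{pmatrix}$ into the nonresonant set and the resonant set $\Lambda=\{n:|n|\leqslant N,\ \|2\rho-\langle n,\alpha\rangle\|<\varepsilon^{\sigma}\}$. Solve the truncated homological equation
\[
Y(\theta+\alpha)A-AY(\theta)=-\mathcal{T}_{N}f+[\text{resonant and averaged part}]
\]
mode by mode. The diagonal modes $n\neq0$ have divisors $|e^{2\pi\mathrm{i}\langle n,\alpha\rangle}-1|\geqslant\gamma|n|^{-\tau}$. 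The off-diagonal nonresonant modes have divisors $\geqslant\varepsilon^{\sigma}$. Using $|n|\leqslant N\sim|\ln\varepsilon|/(r-r')$ and Cauchy estimates on the strip loss $r-r'$, this gives $\sup_{\mathcal{S}_{r'}}\|Y\|\leqslant\varepsilon^{1-\sigma}N^{\tau}(r-r')^{-d}\lesssim\varepsilon^{1-2\sigma-3/D}$. This is the step that uses the smallness assumption $\varepsilon\leqslant c(r-r')^{D\tau}$: the polynomial factors in $(r-r')^{-1}$ are absorbed into $\varepsilon^{-3/D}$.

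In case (i) the resonant set is empty. Set $B=e^{Y}$ and $A_{+}=Ae^{\langle f\rangle_{\mathrm{diag\ part}}}$, so that $\|A_{+}-A\|\leqslant2\|A\|\varepsilon$. The new error is quadratic, $\varepsilon\cdot\varepsilon^{1-2\sigma-3/D}$, plus the tail $\mathcal{R}_{N}f$. By the choice of $N$ the tail is at most $\varepsilon e^{-2\pi N(r-r')}\leqslant\varepsilon^{2}$. This gives \eqref{kn}.

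In case (ii) I would first use the Diophantine condition to show that the resonance is unique. If $n_{1}\neq n_{2}$ both lie in $\Lambda$, then $\gamma|n_{1}-n_{2}|^{-\tau}\leqslant2\varepsilon^{\sigma}$, which contradicts $|n_{1}-n_{2}|\leqslant2N$ once $D>2/\sigma$. So $\Lambda=\{n^{*}\}$, and $|\sin2\pi\rho|\gtrsim\gamma|n^{*}|^{-\tau}$. This yields the bound on $\|P\|$ in \eqref{kp}. I would then iterate the nonresonant elimination finitely many times, keeping the strip fixed up to a sub-loss, and each time removing every mode except $\widehat{w}(n^{*})$. This drives the remainder to $\varepsilon^{100}$, while the accumulated conjugacy remains $e^{Y}$ with $\|Y\|$ controlled by $\|P\|\varepsilon^{1/2}$ after undoing $P$. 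Now conjugate by $R_{\langle n^{*},\theta\rangle/2}$. This rotation kills the phase $e^{2\pi\mathrm{i}\langle n^{*},\theta\rangle}$ of the resonant term and turns it into a constant. The resulting constant $A_{+}$ has the form \eqref{kf}. Here $t$ is the shifted rotation $2\rho-\langle n^{*},\alpha\rangle$ plus $O(\varepsilon)$, hence $|t|\leqslant4\varepsilon^{\sigma}$, and $\nu$ is essentially $\|P\|^{2}\widehat{w}(n^{*})$. Since $|\widehat{w}(n^{*})|\leqslant\varepsilon e^{-2\pi|n^{*}|r}$, this gives \eqref{kt}. The factor $e^{\pi|n^{*}|r'}$ in \eqref{kr} comes from $|R_{\langle n^{*},\theta\rangle/2}|$ on $\mathcal{S}_{r'}$. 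On the real torus this factor disappears, which leaves $\|B\|_{0}\leqslant\|P\|\cdot\|e^{Y}\|$.

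The main obstacle I expect is the bookkeeping in case (ii). The rotation $R_{\langle n^{*},\theta\rangle/2}$ amplifies the remaining errors by $e^{2\pi|n^{*}|r'}$, with $|n^{*}|$ up to $N\sim|\ln\varepsilon|/(r-r')$. This amplification must not destroy the bound $\varepsilon^{100}$. I would handle it by performing the nonresonant iteration on the larger strip $\mathcal{S}_{r}$ and only then rotating. One must then check that $e^{2\pi Nr'}$ against the gain $e^{-2\pi Nr}$ in the tail leaves a net gain $\varepsilon^{2}$ per step, and that enough steps, on the order of $100/\sigma$, fit inside the loss $r-r'$ without violating \eqref{ks}. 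The condition $D>2/\sigma$ is what should make this budget close.
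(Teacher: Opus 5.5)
The paper does not prove this statement; it imports it from \cite{cai2026arithmetic}, where it is a Hou--You type KAM step. Your architecture is that step: normalize $A$, eliminate the nonresonant modes, and in case (ii) rotate by $R_{\langle n^{*},\theta\rangle/2}$. Your handling of $\|P\|$, $t$, $\nu$, $\|B\|_{0}$ and the factor $e^{\pi|n^{*}|r'}$ is on the right track.

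There is, however, a genuine gap in the bound $\sup_{\mathcal{S}_{r'}}\|f_{+}\|\leqslant\varepsilon^{100}$ in \eqref{kr}. If you only eliminate modes with $|n|\leqslant N$, the remainder contains the tail $\mathcal{R}_{N}f$, and repeating the elimination never touches it. Its coefficients can be as large as $\varepsilon e^{-2\pi r|n|}$. So on $\mathcal{S}_{r'}$ it has size about $\varepsilon e^{-2\pi(r-r')N}=\varepsilon^{1+4\pi}$ (take $f$ a single cosine mode with $|n|=N+1$), already far above $\varepsilon^{100}$. The rotation then multiplies the remainder by up to $e^{2\pi r'|n^{*}|}\leqslant\varepsilon^{-4\pi r'/(r-r')}$, which is unbounded in $\varepsilon$ when $r'/(r-r')$ is large. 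Your comparison of $e^{2\pi Nr'}$ with $e^{-2\pi Nr}$ drops the weight $e^{2\pi r'|n|}$ that comes from evaluating the tail itself on $\mathcal{S}_{r'}$. To reach $\varepsilon^{100}$ you must remove every nonresonant mode up to $|n|$ of order $|\ln\varepsilon|/(r-r')^{2}$ in the worst case. That range depends on $r'/(r-r')$, not on a step count of order $100/\sigma$, and it lies far outside $|n|\leqslant2N$, where you proved uniqueness of the resonance.

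The missing idea is that uniqueness holds on a much larger range. If $n\neq n^{*}$ also satisfies $\|2\rho(\alpha,A)-\langle n,\alpha\rangle\|_{\mathbb{R}/\mathbb{Z}}<\varepsilon^{\sigma}$, then $\gamma|n-n^{*}|^{-\tau}<2\varepsilon^{\sigma}$. Hence $n^{*}$ is the only resonance for $|n|\leqslant\tilde{N}\coloneqq(\gamma/2)^{1/\tau}\varepsilon^{-\sigma/\tau}-N$, and all other modes there can be removed with divisors $\gtrsim\varepsilon^{\sigma}$. This is exactly the cutoff $\tilde{N}_{l_{j}}$ in the resonant case of Theorem~\ref{GJKDXL}. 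Since $(r-r')^{-2}\leqslant(c/\varepsilon)^{2/(D\tau)}$, the required range $|\ln\varepsilon|/(r-r')^{2}$ is $\ll\tilde{N}$ because $2/D<\sigma$. After rotation the remainder is then bounded by roughly $\varepsilon e^{2\pi r'|n^{*}|-2\pi(r-r')\tilde{N}}(r-r')^{-d}$, which is smaller than any fixed power of $\varepsilon$. This is the real role of $D>2/\sigma$; uniqueness inside $|n|\leqslant2N$ needs only $D>1/\sigma$. If the nonresonant part is removed in one exact conjugation as in Lemma~\ref{remove}, the resonant remainder is only a constant diagonal term, the off-diagonal $n^{*}$ mode and modes with $|n|>\tilde{N}$. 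No iteration count is then involved.

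A second, smaller gap is the normalization in case (i). Hypothesis (i) excludes only $0<|n|\leqslant N$, so $\|2\rho(\alpha,A)\|_{\mathbb{R}/\mathbb{Z}}$ may be tiny. This is exactly how case (i) is used after the last resonance in Theorem~\ref{r8}. The normalizing cost $\|P_{0}\|^{2}$ can then be of order $\|A\|/|\sin2\pi\rho|$, and it is unbounded for hyperbolic $A$ near parabolic. No power $\|A\|^{\tilde{D}}$ absorbs this, and $B-I=P_{0}^{-1}(e^{Y'}-I)P_{0}$ inherits it. Also, the off-diagonal average has divisor $|e^{4\pi\mathrm{i}\rho}-1|$, so all of $\langle f\rangle$, not just its diagonal part, must go into $A_{+}$.

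The fix is not to normalize. Invert $L_{n}Y=e^{2\pi\mathrm{i}\langle n,\alpha\rangle}A^{-1}YA-Y$ on $\mathrm{sl}(2,\mathbb{C})$ directly, using $\|L_{n}^{-1}\|\lesssim\|A\|^{4}/|\det L_{n}|$ with $|\det L_{n}|\gtrsim\gamma N^{-\tau}\varepsilon^{2\sigma}$ for $0<|n|\leqslant N$. Together with $N^{\tau}(r-r')^{-d}\|A\|^{4}\lesssim\varepsilon^{-3/D}$ from \eqref{ks}, this accounts for the factor $\|A\|^{\tilde{D}}$ and the exponent $1-2\sigma-3/D$.
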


 Choose $C_{0}\geqslant128$, with $C_{0}\geqslant16(1+\|A\|)$, so that the coefficients in \eqref{kp} and \eqref{kt} are bounded by $C_{0}$ whenever the input constant has norm at most $2\|A\|$. Fix $L\geqslant4$ with $L^{\tau/2}\geqslant C_{0}$. The choice \eqref{choice-D-k} implies
\begin{equation}\label{par}
\begin{aligned}
    &s\bigg(D\tau+\frac{1}{2}\bigg)=\frac{1}{6}, \qquad 1-2\sigma-\frac{3}{D}>\frac{1}{2}, \qquad 2-2\sigma-\frac{3}{D}>1+s,\\
    &k-1-2\tau-(1+s)\bigg(D\tau+\frac{1}{2}\bigg)>\frac{1}{3}, \qquad \bigg(D\tau+\frac{1}{2}\bigg)\sigma>2\tau.
\end{aligned}
\end{equation}
For sufficiently large  $l_{1},$  we have 
\begin{equation}\label{l0}
    l_{1}^{s}\geqslant2, \qquad l_{1}^{\frac{1}{2}}\geqslant(4\pi)^{D\tau}, \qquad C_{0}\varepsilon_{l_{1}}^{\sigma}\leqslant\frac{1}{16},
\end{equation}
and for every $j\geqslant1$,
\begin{equation}\label{l1}
\begin{aligned}
    &r_{j+1}N_{l_{j}}\leqslant\frac{1}{4\pi}, \qquad 2C_{0}\varepsilon_{l_{j}}^{\sigma}(LN_{l_{j}}^{2})^{\tau}\leqslant\gamma,\\
    &C_{0}N_{l_{j}}^{\tau}\varepsilon_{l_{j}}\leqslant\varepsilon_{l_{j}}^{\sigma}, \qquad C_{0}^{3}\bigg(\varepsilon_{l_{j}}^{2-2\sigma-\frac{3}{D}}+N_{l_{j}}^{2\tau}\varepsilon_{l_{1}}l_{j}^{-(k-1)}\bigg)\leqslant\varepsilon_{l_{j+1}},
\end{aligned}
\end{equation}
together with
\begin{equation}\label{ls}
    C_{0}\varepsilon_{l_{1}}^{\sigma}+2\sum_{j=1}^{\infty}\varepsilon_{l_{j}}<\ln\frac{3}{2}, \qquad C_{0}\sum_{j=1}^{\infty}\bigg(\varepsilon_{l_{j}}^{\frac{1}{2}}+N_{l_{j}}^{\frac{\tau}{2}}\varepsilon_{l_{j}}^{\frac{1}{2}}\bigg)\leqslant1.
\end{equation}
These conditions can be imposed simultaneously. Indeed, $N_{l_{j}}\leqslant C l_{j}|\ln\varepsilon_{l_{j}}|$ and $(D\tau+\frac{1}{2})(1-\sigma)>\tau$. Together with \eqref{par}, these inequalities make the required ratios tend to zero uniformly in $j$ as $l_{1}\to\infty$. Both series in \eqref{ls} tend to zero as well. Having fixed $l_{1}$, set
\begin{equation}\label{r4}
    \varepsilon\coloneqq\frac{\varepsilon_{l_{1}}}{C'}=\frac{c}{C'(2\|A\|)^{\tilde{D}}l_{1}^{D\tau+\frac{1}{2}}}.
\end{equation}

With these choices, successive applications of \hyperref[akam]{Theorem~\ref*{akam}} yield the following iteration lemma.

\begin{lemma}\label{GZQQZZ}
Let $f\in C^{k}(\mathbb{T}^{d},\mathrm{sl}(2,\mathbb{R}))$ satisfy $\|f\|_{k}\leqslant\varepsilon$. For every $j\geqslant1$, there exist analytic maps $B_{l_{j}}:2\mathbb{T}^{d}\to\mathrm{SL}(2,\mathbb{R})$ and $f'_{l_{j}}:\mathbb{T}^{d}\to\mathrm{sl}(2,\mathbb{R})$, holomorphic on $\mathcal{S}_{r_{j+1}}$, and constant matrices $A_{l_{j}}\in\mathrm{SL}(2,\mathbb{R})$ such that
\begin{equation}\label{cq}
    B_{l_{j}}(\theta+\alpha)^{-1}Ae^{f_{l_{j}}(\theta)}B_{l_{j}}(\theta)=A_{l_{j}}e^{f'_{l_{j}}(\theta)}.
\end{equation}
 and
\begin{equation}\label{gb}
\begin{aligned}
    \|B_{l_{j}}\|_{0}&\leqslant\sup_{\theta\in\mathcal{S}_{r_{j+1}}}\|B_{l_{j}}(\theta)\|\leqslant C N_{l_{j}}^{\tau},\\
    \sup_{\theta\in\mathcal{S}_{r_{j+1}}}\|f'_{l_{j}}(\theta)\|&\leqslant\varepsilon_{l_{j}}^{2-2\sigma-\frac{3}{D}}, \qquad \|A_{l_{j}}\|\leqslant2\|A\|.
\end{aligned}
\end{equation}
Moreover, there exists $\overline{f}_{l_{j}}\in C^{k}(\mathbb{T}^{d},\mathrm{sl}(2,\mathbb{R}))$ satisfying
\begin{equation}\label{cf}
    B_{l_{j}}(\theta+\alpha)^{-1}Ae^{f(\theta)}B_{l_{j}}(\theta)=A_{l_{j}}e^{\overline{f}_{l_{j}}(\theta)}.
\end{equation}
For every integer $0\leqslant\tilde{k}\leqslant\frac{k-4\tau-3}{2(1+s)}$,
\begin{equation}\label{ck}
\begin{aligned}
    \|B_{l_{j}}\|_{\tilde{k}}&\leqslant C l_{j+1}^{\tilde{k}}N_{l_{j}}^{\tau}, \qquad \|\overline{f}_{l_{j}}\|_{0}\leqslant\varepsilon_{l_{j+1}},\\
    \|\overline{f}_{l_{j}}\|_{\tilde{k}}&\leqslant C l_{j+1}^{\tilde{k}}\varepsilon_{l_{j+1}}\longrightarrow0.
\end{aligned}
\end{equation}
\end{lemma}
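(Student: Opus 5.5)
We argue by induction on $j$, applying \hyperref[akam]{Theorem~\ref*{akam}} once at each stage to the analytic approximants $f_{l_j}$ of Section~2.5, evaluated on the strips $\mathcal{S}_{r_j}\supset\mathcal{S}_{r_{j+1}}$. For $j=1$, \eqref{sm} and \eqref{r4} give $\sup_{\mathcal{S}_{r_1}}\|f_{l_1}\|\leqslant C'\|f\|_k\leqslant\varepsilon_{l_1}$. The second inequality in \eqref{ks} with $r=r_1$, $r'=r_2$ holds because $r_1-r_2\geqslant c\,l_1^{-1}$, $l_1^{1/2}\geqslant(4\pi)^{D\tau}$ from \eqref{l0}, and $\|A\|\leqslant 2\|A\|$. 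Applying the theorem yields $B_{l_1},A_{l_1},f'_{l_1}$ satisfying \eqref{cq}.

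For the inductive step we assume \eqref{cq} and \eqref{gb} at stage $j$ and write
\begin{equation*}
B_{l_j}(\theta+\alpha)^{-1}Ae^{f_{l_{j+1}}}B_{l_j}=A_{l_j}e^{\tilde f_{j}},
\end{equation*}
where $\tilde f_j$ differs from $f'_{l_j}$ by a term of size $\|B_{l_j}\|^2_{\mathcal{S}_{r_{j+1}}}\sup\|f_{l_{j+1}}-f_{l_j}\|\lesssim N_{l_j}^{2\tau}\varepsilon_{l_1}l_j^{-k}$, controlled by \eqref{sm} and BCH. By the last inequality of \eqref{l1}, $\sup_{\mathcal{S}_{r_{j+1}}}\|\tilde f_j\|\leqslant\varepsilon_{l_{j+1}}$. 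This is exactly the smallness needed to apply \hyperref[akam]{Theorem~\ref*{akam}} with $A=A_{l_j}$, $r=r_{j+1}$, $r'=r_{j+2}$. We then set $B_{l_{j+1}}=B_{l_j}B$ and $A_{l_{j+1}}=A_+$. In case (i), $\|A_+\|\leqslant\|A_{l_j}\|(1+2\varepsilon_{l_{j+1}})$, and the first sum in \eqref{ls} keeps the product of these factors below $2\|A\|$. In case (ii), $A_+$ is close to the identity up to $|t|,|\nu|\lesssim\varepsilon^\sigma$, so its norm stays bounded as well. We expect case (ii) to occur at most finitely often with controlled total effect, or we reset the bound there.

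The main obstacle is the uniform bound $\|B_{l_j}\|\leqslant CN_{l_j}^\tau$ despite the product structure. Naively, resonant steps would multiply factors $|n^*|^{\tau/2}$. We plan to use that a resonance at step $j$ with $|n^*|\leqslant N_{l_j}$ forces the new rotation number to lie at distance $\gtrsim\gamma (LN_{l_j}^2)^{-\tau}$ from the next resonances; this is the second condition in \eqref{l1}. Consequently no resonance can occur for the next several steps, until $N_{l_{j'}}\geqslant LN_{l_j}^2$. Between resonances the conjugacies are $I+O(\varepsilon^{1/2})$, and their product converges by \eqref{ls}. At a new resonance the factor $P^{-1}$ of size $|n^*|^{\tau/2}$ combines with the previous one. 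Using $\|P\|^2$-type bounds and the growth $N\geqslant LN_{\text{prev}}^2$, the geometric product $\prod |n_i^*|^{\tau/2}$ is dominated by $N_{l_j}^{\tau}$, because the exponents halve backwards. The rotations $R_{\langle n^*,\theta\rangle/2}$ contribute $e^{\pi|n^*|r_{j+2}}\leqslant e^{1/4}$ by $r_{j+1}N_{l_j}\leqslant 1/(4\pi)$.

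For \eqref{cf} and \eqref{ck}, we define $\overline f_{l_j}$ by $e^{\overline f_{l_j}}=A_{l_j}^{-1}B_{l_j}(\theta+\alpha)^{-1}Ae^{f}B_{l_j}$. This equals $e^{f'_{l_j}}$ perturbed by $B_{l_j}^{-1}(\theta+\alpha)(e^f-e^{f_{l_j}})B_{l_j}$. Its $C^0$ size is $\leqslant\varepsilon_{l_j}^{2-2\sigma-3/D}+CN_{l_j}^{2\tau}\varepsilon_{l_1}l_j^{-(k-1)}\leqslant\varepsilon_{l_{j+1}}$, where the tail $\|f-f_{l_j}\|_0$ is obtained by telescoping \eqref{sm}. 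The $C^{\tilde k}$ bounds follow from Cauchy estimates on $\mathcal{S}_{r_{j+1}}$, which lose $r_{j+1}^{-\tilde k}\sim l_{j+1}^{\tilde k}$ for $B_{l_j}$ and $f'_{l_j}$. For the tail $f-f_{l_j}$ we use interpolation between $C^0$ and $C^k$. The restriction $\tilde k\leqslant\frac{k-4\tau-3}{2(1+s)}$ ensures that $l_{j+1}^{2\tilde k}N_{l_j}^{2\tau}l_j^{-(k-1)}$ is still $\lesssim l_{j+1}^{\tilde k}\varepsilon_{l_{j+1}}$, via the fourth relation in \eqref{par}. Convergence to $0$ then follows from $l_{j+1}^{\tilde k}\varepsilon_{l_{j+1}}\to0$, since $\tilde k<D\tau+\tfrac12$.
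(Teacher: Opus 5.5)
Your analytic induction matches the paper's. The first step is the same, and so is the passage from $f_{l_j}$ to $f_{l_{j+1}}$ closed by the last condition in \eqref{l1}. You use the same separation of resonances from the second condition in \eqref{l1}, and the same definition of $\overline f_{l_j}$ with its $C^0$ bound. The step that fails is the $C^{\tilde k}$ bound for $\overline f_{l_j}$.

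You claim that the restriction on $\tilde k$ and the fourth relation in \eqref{par} give $l_{j+1}^{2\tilde k}N_{l_j}^{2\tau}l_j^{-(k-1)}\lesssim l_{j+1}^{\tilde k}\varepsilon_{l_{j+1}}$. This is equivalent to $l_{j+1}^{\tilde k}N_{l_j}^{2\tau}l_j^{-(k-1)}\lesssim\varepsilon_{l_{j+1}}$, and it is false. Because \eqref{choice-D-k} ties $D$ to $k$, the ratio $N_{l_j}^{2\tau}l_j^{-(k-1)}/\varepsilon_{l_{j+1}}$ is, up to logarithms and $j$-independent constants, $l_j^{-\mu}$ with $\mu=k-1-2\tau-(1+s)(D\tau+\frac12)\in(\frac13,\frac43)$. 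The fourth relation in \eqref{par} says exactly $\mu>\frac13$. That $O(1)$ margin cannot absorb the extra factor $l_{j+1}^{\tilde k}\approx l_j^{(1+s)\tilde k}$ once $\tilde k\geqslant2$. At the top of the admissible range the inequality fails by a power of $l_j$ of order $D\tau/2$, so the upper restriction on $\tilde k$ does nothing here. Your bookkeeping is also inconsistent: you charge $\tilde k$ derivatives to both copies of $B_{l_j}$ and none to $f-f_{l_j}$. Used consistently, the submultiplicative norm \eqref{fn} gives $l_{j+1}^{2\tilde k}N_{l_j}^{2\tau}l_j^{\tilde k-(k-1)}$, which for large admissible $\tilde k$ does not even tend to $0$.

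What closes the estimate is a tame bound in which the $\tilde k$ derivatives fall on one factor at a time. By Leibniz and interpolation, $\|XYZ\|_{\tilde k}\lesssim\|X\|_{\tilde k}\|Y\|_0\|Z\|_0+\|X\|_0\|Y\|_{\tilde k}\|Z\|_0+\|X\|_0\|Y\|_0\|Z\|_{\tilde k}$, and similarly $\|\log(I+X)\|_{\tilde k}\lesssim\|X\|_{\tilde k}$ for $\|X\|_0\leqslant\frac12$. Apply this to $B_{l_j}^{-1}e^{-f_{l_j}}(e^{f}-e^{f_{l_j}})B_{l_j}$, using:
\begin{itemize}
\item $\|B_{l_j}\|_0\lesssim N_{l_j}^{\tau}$ and $\|B_{l_j}\|_{\tilde k}\lesssim l_{j+1}^{\tilde k}N_{l_j}^{\tau}$;
\item $\|f-f_{l_j}\|_{p}\lesssim\varepsilon_{l_1}l_j^{p-(k-1)}$, from Cauchy on the telescoping differences in \eqref{sm} as in the paper, or from your interpolation.
\end{itemize}
This term is then $\lesssim l_{j+1}^{\tilde k}N_{l_j}^{2\tau}\varepsilon_{l_1}l_j^{-(k-1)}\lesssim l_{j+1}^{\tilde k}\varepsilon_{l_{j+1}}$, by the same $C^0$ condition in \eqref{l1} you already used. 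The $f'_{l_j}$ part is $\lesssim l_{j+1}^{\tilde k}\varepsilon_{l_j}^{2-2\sigma-3/D}\lesssim l_{j+1}^{\tilde k}\varepsilon_{l_{j+1}}$ by Cauchy and \eqref{l1}. Only $\tilde k<D\tau+\frac12$ and $\tilde k<k-1$ are needed.

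Two smaller points.
\begin{itemize}
\item The lemma assumes nothing about $\rho(\alpha,Ae^{f})$, so resonances may recur indefinitely. The bound $\|A_{l_j}\|\leqslant2\|A\|$ must therefore come from resetting at the last resonant step $p$, namely $\|A_{l_{j+1}}\|\leqslant\exp(5\varepsilon_{l_p}^{\sigma}+2\sum_{q>p}\varepsilon_{l_q})<\frac32$, not from finiteness.
\item Your per-factor bound $e^{1/4}$ for the rotations accumulates over the resonances. Either absorb it into the per-resonance constant controlled by $L$, or use $\sum_i|n^{*}_{l_{j_i}}|\leqslant\frac43|n^{*}_{l_{j_r}}|$ on $\mathcal{S}_{r_{j+2}}$. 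Both need the vector-level separation $|n^{*}_{l_q}|\geqslant L|n^{*}_{l_p}|^{2}$, which follows from $\|A_{l_{q-1}}-I\|\leqslant C_{0}\varepsilon_{l_p}^{\sigma}$, not merely $N\geqslant LN_{\mathrm{prev}}^{2}$.
\end{itemize}
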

Here and below, $C=C(\gamma,\tau,d,k,\sigma,\|A\|)$ is independent of $j.$ 

\begin{remark}
Set $A_{l_{0}}\coloneqq A$, $B_{l_{0}}\coloneqq I$, and $\widetilde{B}_{l_{j-1}}\coloneqq B_{l_{j-1}}^{-1}B_{l_{j}}$. At step $j$, the incremental conjugacy is supplied by \hyperref[akam]{Theorem~\ref*{akam}} with input constant $A_{l_{j-1}}$, threshold $\varepsilon_{l_{j}}$, and widths $r_{j},r_{j+1}$. The step is resonant when alternative (ii) applies, and its resonant vector is denoted by $n_{l_{j}}^{*}$. At such a step, the local logarithm $A''_{l_{j}}\coloneqq\log A_{l_{j}}$ and the coefficient $\nu_{l_{j}}$ in the corresponding normal form satisfy
\begin{equation}\label{r5}
    \|A''_{l_{j}}\|\leqslant C\varepsilon_{l_{j}}^{\sigma}, \qquad |\nu_{l_{j}}|\leqslant C|n_{l_{j}}^{*}|^{\tau}\varepsilon_{l_{j}}.
\end{equation}
\end{remark}
\begin{proof}

We argue by induction. For $j=1,$ by \eqref{sm}, \eqref{l0}, and \eqref{r4}, we have
\begin{equation*}
    \sup_{\theta\in\mathcal{S}_{r_{1}}}\|f_{l_{1}}(\theta)\|\leqslant C'\|f\|_{k}\leqslant\varepsilon_{l_{1}}, \qquad r_{1}-r_{2}\geqslant\frac{1}{4\pi l_{1}}.
\end{equation*}
Moreover, the definition of $\varepsilon_{l_{1}}$ and the inequality $l_{1}^{\frac{1}{2}}\geqslant(4\pi)^{D\tau}$ give
\begin{equation*}
    \varepsilon_{l_{1}}\leqslant\frac{c}{(2\|A\|)^{\tilde{D}}}\bigg(\frac{1}{4\pi l_{1}}\bigg)^{D\tau}\leqslant\frac{c}{\|A\|^{\tilde{D}}}(r_{1}-r_{2})^{D\tau}.
\end{equation*}
Thus \eqref{ks} holds with $(f,\varepsilon,r,r')=(f_{l_{1}},\varepsilon_{l_{1}},r_{1},r_{2})$. Applying \hyperref[akam]{Theorem~\ref*{akam}}, we obtain $B_{l_{1}}$, $A_{l_{1}}$, and $f'_{l_{1}}$ satisfying \eqref{cq} for $j=1$. 

If the first step is nonresonant, \eqref{kn} and \eqref{par} give
\begin{equation*}
    \sup_{\theta\in\mathcal{S}_{r_{2}}}\|B_{l_{1}}(\theta)-I\|\leqslant\varepsilon_{l_{1}}^{\frac{1}{2}}, \qquad \|A_{l_{1}}-A\|\leqslant2\|A\|\varepsilon_{l_{1}}.
\end{equation*}
By \eqref{sc} and \eqref{l0}, we have
\begin{equation*}
    \sup_{\theta\in\mathcal{S}_{r_{2}}}\|B_{l_{1}}(\theta)\|\leqslant2\leqslant2C_{0}N_{l_{1}}^{\tau}, \qquad \|A_{l_{1}}\|\leqslant(1+2\varepsilon_{l_{1}})\|A\|<2\|A\|.
\end{equation*}
The map $B_{l_{1}}$ is $\mathbb{T}^{d}$-periodic and homotopic to $I$, so $\deg B_{l_{1}}=0$.

If the first step is resonant, by \eqref{kp}, there exists $B_{l_{1}}$  with $\deg B_{l_{1}}=n_{l_{1}}^{*}$. And by \eqref{kr}, \eqref{l1}, we have
\begin{equation*}
\sup_{\theta\in\mathcal{S}_{r_{2}}}\|B_{l_{1}}(\theta)\|\leqslant8\|A\|^{\frac{1}{2}}\gamma^{-\frac{1}{2}}N_{l_{1}}^{\frac{\tau}{2}}e^{\pi r_{2}N_{l_{1}}}\leqslant2C_{0}N_{l_{1}}^{\tau}.
\end{equation*}
And \eqref{kt} and \eqref{l1} yield
\begin{equation*}
    |t_{l_{1}}|\leqslant4\varepsilon_{l_{1}}^{\sigma}, \qquad |\nu_{l_{1}}|\leqslant C_{0}|n_{l_{1}}^{*}|^{\tau}\varepsilon_{l_{1}}\leqslant\varepsilon_{l_{1}}^{\sigma}.
\end{equation*}
Together with \eqref{l0} implies
\begin{equation*}
    \|A''_{l_{1}}\|\leqslant5\varepsilon_{l_{1}}^{\sigma}, \qquad \|A_{l_{1}}\|\leqslant e^{5\varepsilon_{l_{1}}^{\sigma}}<2\leqslant2\|A\|.
\end{equation*}
 This proves \eqref{r5} at the first step.
In both cases, \eqref{kn} and \eqref{kr} imply
\begin{equation*}
    \sup_{\theta\in\mathcal{S}_{r_{2}}}\|f'_{l_{1}}(\theta)\|\leqslant\varepsilon_{l_{1}}^{2-2\sigma-\frac{3}{D}},
\end{equation*}
since $0<\varepsilon_{l_{1}}<1$ and $100>2-2\sigma-\frac{3}{D}$.

This proves the analytic estimates at the first step. Suppose that they hold through step $j$. We first pass from $f_{l_{j}}$ to $f_{l_{j+1}}$ and then apply the next analytic step.

Assume that all conclusions hold in step $j$. By \eqref{sm} and \eqref{r4},
\begin{equation*}
    \sup_{\theta\in\mathcal{S}_{r_{j+1}}}\|f_{l_{j+1}}(\theta)-f_{l_{j}}(\theta)\|\leqslant C'\|f\|_{k}\sum_{q=l_{j}}^{l_{j+1}-1}q^{-k}\leqslant\varepsilon_{l_{1}}l_{j}^{-(k-1)}.
\end{equation*}
Both approximations have norm at most $\varepsilon_{l_{1}}$ on $\mathcal{S}_{r_{j+1}}$.  The inductive bounds give
\begin{equation}\label{nf}
\begin{aligned}
    &\sup_{\theta\in\mathcal{S}_{r_{j+1}}}\Big\|e^{f'_{l_{j}}(\theta)}B_{l_{j}}(\theta)^{-1}e^{-f_{l_{j}}(\theta)}e^{f_{l_{j+1}}(\theta)}B_{l_{j}}(\theta)-I\Big\|\\
    &\qquad\leqslant2\varepsilon_{l_{j}}^{2-2\sigma-\frac{3}{D}}+64C_{0}^{2}N_{l_{j}}^{2\tau}\varepsilon_{l_{1}}l_{j}^{-(k-1)}\leqslant\frac{1}{2}\varepsilon_{l_{j+1}},
\end{aligned}
\end{equation}
where the last inequality follows from \eqref{l1} and $C_{0}\geqslant128$. We may thus define
\begin{equation}\label{fl}
    \widetilde{f}_{l_{j}}(\theta)\coloneqq\log\Big(e^{f'_{l_{j}}(\theta)}B_{l_{j}}(\theta)^{-1}e^{-f_{l_{j}}(\theta)}e^{f_{l_{j+1}}(\theta)}B_{l_{j}}(\theta)\Big),
\end{equation}
 By \eqref{nf}, the inequality $\|\log(I+X)\|\leqslant2\|X\|$ for $\|X\|\leqslant\frac{1}{2}$, and the width estimate used in the initial step,
\begin{equation}\label{fj}
    \sup_{\theta\in\mathcal{S}_{r_{j+1}}}\|\widetilde{f}_{l_{j}}(\theta)\|\leqslant\varepsilon_{l_{j+1}}\leqslant\frac{c}{\|A_{l_{j}}\|^{\tilde{D}}}(r_{j+1}-r_{j+2})^{D\tau},
\end{equation}
where we used $\|A_{l_{j}}\|\leqslant2\|A\|$. Also, \eqref{cq} and \eqref{fl} imply
\begin{equation}\label{ij}
    B_{l_{j}}(\theta+\alpha)^{-1}Ae^{f_{l_{j+1}}(\theta)}B_{l_{j}}(\theta)=A_{l_{j}}e^{\widetilde{f}_{l_{j}}(\theta)}.
\end{equation}
By \eqref{fj}, \hyperref[akam]{Theorem~\ref*{akam}} applies to $(\alpha,A_{l_{j}}e^{\widetilde{f}_{l_{j}}})$ with threshold $\varepsilon_{l_{j+1}}$ and widths $r_{j+1},r_{j+2}$. It gives $\widetilde{B}_{l_{j}}$, $A_{l_{j+1}}$, and $f'_{l_{j+1}}$ satisfying
\begin{equation*}
    \widetilde{B}_{l_{j}}(\theta+\alpha)^{-1}A_{l_{j}}e^{\widetilde{f}_{l_{j}}(\theta)}\widetilde{B}_{l_{j}}(\theta)=A_{l_{j+1}}e^{f'_{l_{j+1}}(\theta)}.
\end{equation*}
Set $B_{l_{j+1}}\coloneqq B_{l_{j}}\widetilde{B}_{l_{j}}$. Combining this identity with \eqref{ij} proves \eqref{cq} at step $j+1$.

If step $j+1$ is nonresonant, \eqref{kn} and \eqref{par} give
\begin{equation*}
\begin{aligned}
    &\sup_{\theta\in\mathcal{S}_{r_{j+2}}}\|\widetilde{B}_{l_{j}}(\theta)-I\|\leqslant\varepsilon_{l_{j+1}}^{\frac{1}{2}}, \qquad \deg\widetilde{B}_{l_{j}}=0,\\
    &\|A_{l_{j+1}}-A_{l_{j}}\|\leqslant4\|A\|\varepsilon_{l_{j+1}}, \qquad \sup_{\theta\in\mathcal{S}_{r_{j+2}}}\|f'_{l_{j+1}}(\theta)\|\leqslant\varepsilon_{l_{j+1}}^{2-2\sigma-\frac{3}{D}}.
\end{aligned}
\end{equation*}
If step $j+1$ is resonant, \eqref{kp} gives
\begin{equation*}
    \widetilde{B}_{l_{j}}(\theta)=P_{l_{j+1}}^{-1}e^{Y_{l_{j+1}}(\theta)}R_{\langle n_{l_{j+1}}^{*},\theta\rangle/2}, \qquad \deg\widetilde{B}_{l_{j}}=n_{l_{j+1}}^{*},
\end{equation*}
with
\begin{equation*}
    \|P_{l_{j+1}}\|\leqslant C_{0}|n_{l_{j+1}}^{*}|^{\frac{\tau}{2}}, \qquad \sup_{\theta\in\mathcal{S}_{r_{j+2}}}\|Y_{l_{j+1}}(\theta)\|\leqslant C_{0}|n_{l_{j+1}}^{*}|^{\frac{\tau}{2}}\varepsilon_{l_{j+1}}^{\frac{1}{2}}.
\end{equation*}
Moreover, \eqref{kf}, \eqref{kt}, and \eqref{l1} imply
\begin{equation*}
    |t_{l_{j+1}}|\leqslant4\varepsilon_{l_{j+1}}^{\sigma}, \qquad |\nu_{l_{j+1}}|\leqslant C_{0}|n_{l_{j+1}}^{*}|^{\tau}\varepsilon_{l_{j+1}}\leqslant\varepsilon_{l_{j+1}}^{\sigma}.
\end{equation*}
As in the initial step, the local logarithm is well defined and
\begin{equation}\label{ri}
    \|A''_{l_{j+1}}\|\leqslant5\varepsilon_{l_{j+1}}^{\sigma}, \qquad \|A_{l_{j+1}}-I\|\leqslant6\varepsilon_{l_{j+1}}^{\sigma}, \qquad \sup_{\theta\in\mathcal{S}_{r_{j+2}}}\|f'_{l_{j+1}}(\theta)\|\leqslant\varepsilon_{l_{j+1}}^{100}.
\end{equation}
This proves \eqref{r5} at step $j+1$. 

 At every nonresonant step $q$, \eqref{kn} gives $\|A_{l_{q}}\|\leqslant\|A_{l_{q-1}}\|(1+2\varepsilon_{l_{q}})$. If no resonance has occurred through step $j+1$, then $\|A_{l_{j+1}}\|\leqslant\|A\|\exp(2\sum_{q=1}^{j+1}\varepsilon_{l_{q}})<\frac{3}{2}\|A\|$ by \eqref{ls}. Otherwise, let $p\leqslant j+1$ be the last resonant step. The resonant normal form and \eqref{ls} yield
\begin{equation*}
    \|A_{l_{j+1}}\|\leqslant\exp\bigg(5\varepsilon_{l_{p}}^{\sigma}+2\sum_{q=p+1}^{j+1}\varepsilon_{l_{q}}\bigg)<\frac{3}{2}\leqslant\frac{3}{2}\|A\|.
\end{equation*}
Thus $\|A_{l_{j+1}}\|\leqslant2\|A\|$.

 Let $p<q\leqslant j+1$ be consecutive resonant steps. The resonant estimates and the intervening nonresonant increments give
\begin{equation*}
    \|A_{l_{q-1}}-I\|\leqslant6\varepsilon_{l_{p}}^{\sigma}+4\|A\|\sum_{h=p+1}^{q-1}\varepsilon_{l_{h}}\leqslant C_{0}\varepsilon_{l_{p}}^{\sigma},
\end{equation*}
since $\sum_{h=p+1}^{\infty}\varepsilon_{l_{h}}\leqslant2\varepsilon_{l_{p+1}}\leqslant2\varepsilon_{l_{p}}^{\sigma}$. For a constant matrix near $I$, its eigenvalues give
\begin{equation*}
    \|2\rho(\alpha,A_{l_{q-1}})\|_{\mathbb{R}/\mathbb{Z}}\leqslant\frac{1}{\pi}\|\log A_{l_{q-1}}\|\leqslant\frac{2}{\pi}\|A_{l_{q-1}}-I\|\leqslant C_{0}\varepsilon_{l_{p}}^{\sigma}.
\end{equation*}
The resonance condition at step $q$ and the Diophantine condition imply $\gamma|n_{l_{q}}^{*}|^{-\tau}\leqslant2C_{0}\varepsilon_{l_{p}}^{\sigma}$. Hence \eqref{l1} gives
\begin{equation}\label{sep}
    |n_{l_{q}}^{*}|\geqslant L N_{l_{p}}^{2}\geqslant L|n_{l_{p}}^{*}|^{2}.
\end{equation}
Let $j_{1}<\cdots<j_{r}\leqslant j+1$ be all the resonant steps. If $r\geqslant1$, \eqref{sep} and $L^{\tau/2}\geqslant C_{0}$ imply
\begin{equation}\label{pr}
    \prod_{i=1}^{r}C_{0}|n_{l_{j_{i}}}^{*}|^{\frac{\tau}{2}}\leqslant C_{0}|n_{l_{j_{r}}}^{*}|^{\tau}, \qquad \sum_{i=1}^{r}|n_{l_{j_{i}}}^{*}|\leqslant\frac{4}{3}|n_{l_{j_{r}}}^{*}|.
\end{equation}
Indeed, the first bound follows by induction on $r$, and the second follows from $|n_{l_{j_{i+1}}}^{*}|\geqslant4|n_{l_{j_{i}}}^{*}|$. By \eqref{ls}, the product of the norms of all factors close to $I$ is at most
\begin{equation*}
    \exp\bigg(C_{0}\sum_{q=1}^{j+1}\bigg(\varepsilon_{l_{q}}^{\frac{1}{2}}+N_{l_{q}}^{\frac{\tau}{2}}\varepsilon_{l_{q}}^{\frac{1}{2}}\bigg)\bigg)\leqslant e.
\end{equation*}
Also, $\|P_{l_{q}}^{-1}\|=\|P_{l_{q}}\|$ and $\sup_{\theta\in\mathcal{S}_{r_{j+2}}}\|R_{\langle n,\theta\rangle/2}\|\leqslant e^{\pi r_{j+2}|n|}$. The cutoffs increase with the step index, since
\begin{equation*}
    N_{l_{q}}\leqslant8\pi l_{q}|\ln\varepsilon_{l_{q}}|<4\pi l_{q+1}|\ln\varepsilon_{l_{q+1}}|\leqslant N_{l_{q+1}}.
\end{equation*}
Therefore, \eqref{l1} and \eqref{pr} give
\begin{equation*}
\begin{aligned}
    \sup_{\theta\in\mathcal{S}_{r_{j+2}}}\|B_{l_{j+1}}(\theta)\|&\leqslant eC_{0}N_{l_{j+1}}^{\tau}\exp\bigg(\frac{4\pi}{3}r_{j+2}N_{l_{j+1}}\bigg)\\
    &\leqslant e^{\frac{4}{3}}C_{0}N_{l_{j+1}}^{\tau}<4C_{0}N_{l_{j+1}}^{\tau}.
\end{aligned}
\end{equation*}

If no resonant step has occurred, the bound for $B_{l_{j+1}}$ follows from \eqref{ls}. This proves \eqref{gb} at step $j+1$ and completes the analytic induction.

Define $\overline{f}_{l_{j}}$ by the right-hand side of \eqref{fl}, with $f_{l_{j+1}}$ replaced by $f$. By \eqref{sm} and \eqref{r4}, the product estimate in \eqref{nf} applies on the real torus. Thus the local logarithm is well defined, $\overline{f}_{l_{j}}\in C^{k}(\mathbb{T}^{d},\mathrm{sl}(2,\mathbb{R}))$, and $\|\overline{f}_{l_{j}}\|_{0}\leqslant\varepsilon_{l_{j+1}}$. Equation \eqref{cq} then gives \eqref{cf}.

For $0\leqslant p\leqslant\tilde{k}<k-1$, Cauchy's inequalities applied to the analytic differences in \eqref{sm}, followed by summation, give $\|f-f_{l_{j}}\|_{p}\leqslant C\varepsilon_{l_{1}}l_{j}^{p-(k-1)}$. Using this estimate and \eqref{gb} in the product and chain rules for the local logarithm proves the bound for $\|\overline{f}_{l_{j}}\|_{\tilde{k}}$ in \eqref{ck}, by \eqref{l1}. The bound for $\|B_{l_{j}}\|_{\tilde{k}}$ follows from \eqref{gb} and Cauchy's inequalities. Finally, the convergence in \eqref{ck} follows from \eqref{sc} and $\tilde{k}<D\tau+\frac{1}{2}$.

\end{proof}

We next consider cocycles whose rotation number is rational with respect to $\alpha$. We show that the resonant steps terminate, so the conjugacies constructed above converge and yield a quantitative parabolic reduction.
At the choice of the initial scale, before fixing $\varepsilon$ in \eqref{r4}, we additionally require $C\varepsilon_{l_{1}}^{\frac{1}{2}-\sigma}<\frac{1}{2}$ and
\begin{equation}\label{r0}
    C\bigg(\frac{c}{(2\|A\|)^{\tilde{D}}}\bigg)^{1-\frac{1}{3D}}(C')^{\frac{1}{3D}}l_{1}^{-\frac{D\tau-1}{6D}}\leqslant1.
\end{equation}

\begin{theorem}\label{r8}
Let $f\in C^{k}(\mathbb{T}^{d},\mathrm{sl}(2,\mathbb{R}))$ satisfy $\|f\|_{k}\leqslant\varepsilon$. Suppose that $(\alpha,Ae^{f})$ is not uniformly hyperbolic and
\begin{equation*}
    2\rho(\alpha,Ae^{f})\equiv\langle m,\alpha\rangle\pmod{\mathbb{Z}}, \qquad m\in\mathbb{Z}^{d}\setminus\{0\}.
\end{equation*}
For every integer $0\leqslant\tilde{k}\leqslant\frac{k-4\tau-3}{2(1+s)}$, there exist a projectively $\mathbb{T}^{d}$-periodic map $B_{1}\in C^{\tilde{k}}(2\mathbb{T}^{d},\mathrm{SL}(2,\mathbb{R}))$ and $\zeta\in\mathbb{R}$ such that
\begin{equation*}
    B_{1}(\theta+\alpha)^{-1}Ae^{f(\theta)}B_{1}(\theta)=\begin{pmatrix}1&\zeta\\0&1\end{pmatrix}\eqqcolon\tilde{H},
\end{equation*}
with
\begin{equation*}
    |\zeta|\leqslant\varepsilon^{\frac{1}{3D}}|m|^{-(k-4\tau-\frac{5}{2})}, \qquad \|B_{1}\|_{\tilde{k}}\leqslant C|m|^{2\tilde{k}+\tau}.
\end{equation*}
The constant $C$ is independent of $m$ and of the chosen initial scale.
\end{theorem}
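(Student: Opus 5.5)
We run the iteration of Lemma~\ref{GZQQZZ} for $(\alpha,Ae^{f})$ and show that, because the rotation number is rational with respect to $\alpha$, only finitely many steps are resonant. The last resonant vector $n^{*}_{l_{p}}$ is tied to $m$, and from that step on every increment is close to the identity.

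\emph{Step 1: the resonances are finite in number.} By the degree formula, after step $j$ we have
\begin{equation*}
2\rho(\alpha,A_{l_{j}}e^{\overline{f}_{l_{j}}})\equiv\Big\langle m-\sum_{i}n^{*}_{l_{j_{i}}},\alpha\Big\rangle\pmod{\mathbb{Z}}.
\end{equation*}
By \eqref{rho_holder} and $\|\overline{f}_{l_{j}}\|_{0}\leqslant\varepsilon_{l_{j+1}}$, $\rho(\alpha,A_{l_{j}})$ differs from this value by at most $\tilde c\,\varepsilon_{l_{j+1}}^{1/2}$. Suppose $m':=m-\sum n^{*}\neq0$. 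If a further resonance occurred at a step $q$ with $|n^{*}_{l_{q}}|$ much larger than $|m'|$, the Diophantine condition and \eqref{sep} would give a contradiction. If instead $m'=0$, then the rotation number of $A_{l_{j}}$ is $O(\varepsilon_{l_{j+1}}^{1/2})$, so no resonance with $|n|\leqslant N_{l_{j+1}}$ can satisfy $\|\langle n,\alpha\rangle\|\geqslant\gamma N^{-\tau}\gg\varepsilon^{\sigma}$. Hence all later steps are nonresonant.

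I expect the argument to show that the resonances stop exactly when $\sum n^{*}=m$. Let $p$ be the last resonant step. By \eqref{sep} and \eqref{pr}, $\frac34|m|\leqslant|n^{*}_{l_{p}}|\leqslant\frac43|m|$. If no resonance occurs at all, $m$ must be $0$, which is excluded. Consequently $N_{l_{p}}\geqslant|n^{*}_{l_{p}}|\gtrsim|m|$, and by \eqref{sep} also $|m|\gtrsim N_{l_{p-1}}^{2}$, so $l_{p}\lesssim|m|$ up to log factors, and $\varepsilon_{l_{p}}\gtrsim|m|^{-(D\tau+1/2)-}$.

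\emph{Step 2: convergence after step $p$.} For $j>p$ the increments $\widetilde B_{l_{j}}$ satisfy $\|\widetilde B_{l_{j}}-I\|\leqslant\varepsilon_{l_{j+1}}^{1/2}$ on strips. Cauchy estimates then give $\|\widetilde B_{l_{j}}-I\|_{\tilde k}\lesssim l_{j+2}^{\tilde k}\varepsilon_{l_{j+1}}^{1/2}$, which is summable. So $B_{l_{j}}\to B_{\infty}$ in $C^{\tilde k}$ with
\begin{equation*}
\|B_{\infty}\|_{\tilde k}\lesssim\|B_{l_{p}}\|_{\tilde k}\lesssim l_{p+1}^{\tilde k}N_{l_{p}}^{\tau}.
\end{equation*}
Here we still need $l_{p+1}\lesssim|m|^{2}$, which follows from $l_{p+1}\approx l_{p}^{1+s}$ and $l_{p}\lesssim|m|$; this yields $C|m|^{2\tilde k+\tau}$ once $N_{l_{p}}\approx|m|$ is checked. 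The constants $A_{l_{j}}$ converge to some $A_{\infty}$ and $\overline f_{l_{j}}\to0$ in $C^{\tilde k}$ by \eqref{ck}, so $B_{\infty}$ conjugates $Ae^{f}$ to $A_{\infty}$. Since the total degree equals $m$, $\rho(\alpha,A_{\infty})=0$. Non-uniform hyperbolicity excludes hyperbolic $A_{\infty}$, so $A_{\infty}$ is parabolic or $\pm I$. A constant rotation $P$ conjugates it to $\tilde H$; absorb $P$ into $B_{1}$. The bounds on $P$ come from the normal form \eqref{kf}.

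\emph{Step 3: the bound on $\zeta$ (main obstacle).} Here $|\zeta|\approx|\nu_{\infty}|$, and this is fixed essentially at step $p$. From \eqref{kt},
\begin{equation*}
|\nu_{l_{p}}|\leqslant C|m|^{\tau}\varepsilon_{l_{p}}e^{-2\pi|n^{*}|r_{p}}.
\end{equation*}
The exponential factor alone is useless, since $|n^{*}|r_{p}=O(\log)$. We must therefore use the sharper input that the size of $\widetilde f_{l_{p-1}}$ at Fourier mode $n^{*}$ is controlled by the $C^{k}$ decay of $f$, roughly $\|f\|_{k}|n^{*}|^{-k}$, plus a tail $\varepsilon_{l_{p}}^{2-}$. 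Since $l_{p}\gtrsim|m|^{1/(1+s)}$ up to logs, $\varepsilon_{l_{p}}^{1+s}\lesssim|m|^{-(1+s)(D\tau+1/2)}$. By \eqref{par} and $k<(D+2)\tau+3$ this gives roughly $|m|^{-(k-4\tau-5/2)}$. Later nonresonant steps perturb $\nu$ by at most $\sum_{j>p}\varepsilon_{l_{j}}$, which is smaller still. Trading part of $\varepsilon_{l_{p}}$ for the prefactor $\varepsilon^{1/(3D)}$ uses \eqref{r0}. I expect the exponent bookkeeping here, namely locating $l_{p}$ relative to $|m|$ on both sides and converting the resonant Fourier coefficient into a $|m|^{-k}$-type bound, to be the delicate part.
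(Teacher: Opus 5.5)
\textbf{The gap is the conjugacy bound in Step 2.} You bound $\|B_\infty\|_{\tilde k}$ by the crude estimate $\|B_{l_p}\|_{\tilde k}\lesssim l_{p+1}^{\tilde k}N_{l_p}^{\tau}$ from \eqref{ck}. You then need $l_p\lesssim|m|$ and $N_{l_p}\approx|m|$, and neither holds in general.

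\eqref{sep} relates consecutive \emph{resonant} steps, not steps $p-1$ and $p$, so it does not give $|m|\gtrsim N_{l_{p-1}}^{2}$. When step $p-1$ is nonresonant you only get $|n^*_{l_p}|\gtrsim N_{l_{p-1}}$. Hence $N_{l_p}\lesssim|m|^{1+s}$ up to logarithms, which already fails to give $|m|^{\tau}$ when $\tilde k=0$.

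More seriously, take $|m|$ small, say $|m|=1$. Then $2\rho(\alpha,A)$ is within $C\varepsilon^{1/2}$ of $\langle m,\alpha\rangle$, so the resonance with $n^*=m$ happens at the first step. Thus $p=1$, and your bound is $l_2^{\tilde k}N_{l_1}^{\tau}$, a power of the initial scale, while the target is $C|m|^{2\tilde k+\tau}=C$ with $C$ independent of $l_1$. That independence is part of the statement and is not cosmetic. In the proof of Theorem~\ref{GYGJMGYQ}(1) one needs $C^{6}K^{2}\varepsilon^{t/D}\leqslant c_0$ with $\varepsilon\sim l_1^{-(D\tau+1/2)}$ and $t$ small, and this fails if $C$ grows polynomially in $l_1$.

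\textbf{The missing idea} is to bound the limit conjugacy factor by factor, never applying Cauchy estimates to the rotations. Write it as a product of near-identity factors and resonant factors $P_{l_{j_i}}^{-1}e^{Y_{l_{j_i}}}R_{\langle n^*_{l_{j_i}},\theta\rangle/2}$.
\begin{itemize}
\item Cauchy estimates are used only on the near-identity pieces: $\|Y_{l_j}\|_{\tilde k}$ and $\|\widetilde B_{l_j}-I\|_{\tilde k}$ are at most $Cl_j^{-1/4}$, so their product is bounded uniformly in $l_1$.
\item The rotation is computed exactly: $\|R_{\langle n,\theta\rangle/2}\|_{\tilde k}\leqslant e^{\pi}|n|^{\tilde k}$ in the norm \eqref{fn}.
\item $\|P^{-1}\|\lesssim|n^*|^{\tau/2}$ by \eqref{kp}.
\end{itemize}
Each resonance therefore costs $2C_0|n^*_{l_{j_i}}|^{\tilde k+\tau/2}$. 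Then \eqref{sep}, with $L$ large relative to $C_0$, telescopes the product to $C|n^*_{l_g}|^{2\tilde k+\tau}\leqslant C|m|^{2\tilde k+\tau}$. The factor $2$ in the exponent comes from this telescoping, not from $l_{p+1}\lesssim|m|^{2}$.

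\textbf{On Step 3.} You do not need any Fourier-decay input for $\widetilde f_{l_{p-1}}$ at mode $n^*$, and you have not justified one; the earlier rotation factors shift Fourier modes. The paper uses only $|\nu|\leqslant C|n^*_{l_g}|^{\tau}\varepsilon_{l_g}$ together with $|m|\leqslant\frac43|n^*_{l_g}|\leqslant Cl_g|\ln\varepsilon_{l_g}|$. Since $\varepsilon_{l_g}\sim l_g^{-(D\tau+1/2)}$ and \eqref{choice-D-k} gives $k-(D+3)\tau-3<-\tau$, this yields $|\nu||m|^{k-4\tau-\frac{5}{2}}\leqslant\frac{Cc}{(2\|A\|)^{\tilde D}}l_1^{-\tau/2}$. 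Then \eqref{r0} converts this into the prefactor $\varepsilon^{1/(3D)}$. The $k$-dependent decay is built into the scales through $D$. Your closing computation amounts to this once its exponents are corrected: for instance, $l_p\gtrsim|m|^{1/(1+s)}$ gives $\varepsilon_{l_p}^{1+s}\lesssim|m|^{-(D\tau+1/2)}$, not $|m|^{-(1+s)(D\tau+1/2)}$.
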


\begin{proof}

Apply \hyperref[GZQQZZ]{Lemma~\ref*{GZQQZZ}}. By \eqref{cf} and the transformation law for the rotation number,
\begin{equation*}
    2\rho(\alpha,A_{l_{j}}e^{\overline{f}_{l_{j}}})\equiv\langle m-\deg B_{l_{j}},\alpha\rangle\pmod{\mathbb{Z}}.
\end{equation*}
Combining \eqref{rho_holder} with \eqref{ck}, we obtain
\begin{equation}\label{rr}
    \|2\rho(\alpha,A_{l_{j}})-\langle m-\deg B_{l_{j}},\alpha\rangle\|_{\mathbb{R}/\mathbb{Z}}\leqslant C\|\overline{f}_{l_{j}}\|_{0}^{\frac{1}{2}}\leqslant C\varepsilon_{l_{j+1}}^{\frac{1}{2}}.
\end{equation}
The constant is uniform in $j$, since $\|A_{l_{j}}\|\leqslant2\|A\|$. The initial scale conditions and \eqref{l1} imply
\begin{equation}\label{r9}
    C\varepsilon_{l_{j}}^{\frac{1}{2}}<\frac{1}{2}\varepsilon_{l_{j}}^{\sigma}, \qquad 2\varepsilon_{l_{j}}^{\sigma}<\frac{\gamma}{(2N_{l_{j}})^{\tau}}, \qquad j\geqslant1.
\end{equation}
The second inequality ensures uniqueness of the resonant vector by the Diophantine condition.

We first show that the iteration has a last resonant step. If $\deg B_{l_{j-1}}=m$ for some $j\geqslant2$, then \eqref{rr} and \eqref{r9} give, for every $0<|n|\leqslant N_{l_{j}}$,
\begin{equation*}
    \|2\rho(\alpha,A_{l_{j-1}})-\langle n,\alpha\rangle\|_{\mathbb{R}/\mathbb{Z}}\geqslant\frac{\gamma}{N_{l_{j}}^{\tau}}-C\varepsilon_{l_{j}}^{\frac{1}{2}}>\varepsilon_{l_{j}}^{\sigma}.
\end{equation*}
Thus step $j$ is nonresonant, and its incremental degree is zero. Repeating this argument shows that all subsequent steps are nonresonant and their accumulated degree remains $m$.

Since $N_{l_{j+1}}/N_{l_{j}}\to\infty$, choose $J\geqslant2$ such that $|m|+\sum_{j=1}^{J-1}N_{l_{j}}<N_{l_{J}}$. If $\deg B_{l_{J-1}}\neq m$, set $m'\coloneqq m-\deg B_{l_{J-1}}$. Degree additivity gives $0<|m'|<N_{l_{J}}$, while \eqref{rr} and \eqref{r9} imply
\begin{equation*}
    \|2\rho(\alpha,A_{l_{J-1}})-\langle m',\alpha\rangle\|_{\mathbb{R}/\mathbb{Z}}\leqslant C\varepsilon_{l_{J}}^{\frac{1}{2}}<\varepsilon_{l_{J}}^{\sigma}.
\end{equation*}
Consequently, step $J$ is resonant with $n_{l_{J}}^{*}=m'$, and $\deg B_{l_{J}}=m$. In either case, the accumulated degree reaches $m$ and no further resonance occurs. Since $m\neq0$, there is at least one resonant step. Denote all resonant steps by $j_{1}<\cdots<j_{r}=g$. Then
\begin{equation}\label{r10}
    m=\sum_{i=1}^{r}n_{l_{j_{i}}}^{*}, \qquad \deg B_{l_{j}}=m, \qquad j\geqslant g.
\end{equation}
By \eqref{sep}, $\sum_{i=1}^{r-1}|n_{l_{j_{i}}}^{*}|\leqslant\frac{1}{3}|n_{l_{g}}^{*}|$. Hence
\begin{equation}\label{m}
    \frac{2}{3}|n_{l_{g}}^{*}|\leqslant|m|\leqslant\frac{4}{3}|n_{l_{g}}^{*}|.
\end{equation}

 The parameter choices imply
\begin{equation*}
    (1+s)\tilde{k}-\frac{D\tau+\frac{1}{2}}{2}<-\tau-\frac{1}{4}, \qquad (1+s)\tilde{k}+\frac{\tau}{2}-\frac{D\tau+\frac{1}{2}}{2}<-\frac{\tau}{2}-\frac{1}{4}.
\end{equation*}
Since every step after $g$ is nonresonant, Cauchy's inequalities give
\begin{equation*}
    \|\widetilde{B}_{l_{j}}-I\|_{\tilde{k}}\leqslant C l_{j+2}^{\tilde{k}}\varepsilon_{l_{j+1}}^{\frac{1}{2}}\leqslant C l_{j+1}^{-\frac{1}{4}}, \qquad j\geqslant g.
\end{equation*}
It shows that $B_{l_{j}}$ converges in $C^{\tilde{k}}$ to a projectively $\mathbb{T}^{d}$-periodic map $B:2\mathbb{T}^{d}\to\mathrm{SL}(2,\mathbb{R})$. The resonant normal form and \eqref{ls} also give $\|A_{l_{j}}\|\leqslant\exp(5\varepsilon_{l_{g}}^{\sigma}+2\sum_{q=g+1}^{j}\varepsilon_{l_{q}})<\frac{3}{2}$ for $j\geqslant g$. Thus the constants converge, with
\begin{equation}\label{rg}
    \overline{A}\coloneqq\lim_{j\to\infty}A_{l_{j}}, \qquad \|\overline{A}-A_{l_{g}}\|\leqslant3\sum_{j=g+1}^{\infty}\varepsilon_{l_{j}}\leqslant6\varepsilon_{l_{g+1}}.
\end{equation}
Since $\|\overline{f}_{l_{j}}\|_{\tilde{k}}\to0$ by \eqref{ck}, passing to the limit in \eqref{cf} gives
\begin{equation*}
    B(\theta+\alpha)^{-1}Ae^{f(\theta)}B(\theta)=\overline{A}, \qquad \deg B=m, \qquad 2\rho(\alpha,\overline{A})\equiv0\pmod{\mathbb{Z}}.
\end{equation*}
Here the degree is preserved by uniform convergence. The matrix $\overline{A}$ is not hyperbolic, since the original cocycle is not uniformly hyperbolic. Moreover, \eqref{rg} and the resonant estimates place $\overline{A}$ near $I$. Its rotation number therefore implies $\operatorname{tr}\overline{A}=2$. Consequently, $h\coloneqq\log\overline{A}=\overline{A}-I$ satisfies $h^{2}=0$. Write
\begin{equation*}
    MhM^{-1}=\begin{pmatrix}it&\nu\\\overline{\nu}&-it\end{pmatrix}, \qquad t\in\mathbb{R}, \quad \nu\in\mathbb{C}, \quad |t|=|\nu|.
\end{equation*}
A constant rotation $R_{\phi}$ puts $h$ into the form
\begin{equation*}
    R_{\phi}hR_{-\phi}=\begin{pmatrix}0&\zeta\\0&0\end{pmatrix}, \qquad |\zeta|=\|h\|\leqslant2|\nu|.
\end{equation*}
Set $B_{1}\coloneqq B R_{-\phi}$. This gives the required conjugacy, with $\|B_{1}\|_{\tilde{k}}=\|B\|_{\tilde{k}}$.

We now estimate $\zeta$. The local logarithm is Lipschitz near $I$, so \eqref{r5} and \eqref{rg} imply
\begin{equation*}
    |\nu-\nu_{l_{g}}|\leqslant C\varepsilon_{l_{g+1}}, \qquad |\nu|\leqslant C|n_{l_{g}}^{*}|^{\tau}\varepsilon_{l_{g}}.
\end{equation*}
By \eqref{m} and $|n_{l_{g}}^{*}|\leqslant N_{l_{g}}\leqslant C l_{g}|\ln\varepsilon_{l_{g}}|$,
\begin{equation*}
\begin{aligned}
    |\nu||m|^{k-4\tau-\frac{5}{2}}&\leqslant\frac{C c}{(2\|A\|)^{\tilde{D}}}l_{g}^{k-(D+3)\tau-3}|\ln\varepsilon_{l_{g}}|^{k-3\tau-\frac{5}{2}}\\
    &\leqslant\frac{C c}{(2\|A\|)^{\tilde{D}}}l_{1}^{-\frac{\tau}{2}}.
\end{aligned}
\end{equation*}
The last bound follows from $k-(D+3)\tau-3<-\tau$ and $|\ln\varepsilon_{l_{j}}|\leqslant C(1+\ln l_{j})$. All constants are independent of $g$ and $l_{1}$. By \eqref{r4} and \eqref{r0},
\begin{equation*}
    \frac{2|\nu||m|^{k-4\tau-\frac{5}{2}}}{\varepsilon^{\frac{1}{3D}}}\leqslant C\bigg(\frac{c}{(2\|A\|)^{\tilde{D}}}\bigg)^{1-\frac{1}{3D}}(C')^{\frac{1}{3D}}l_{1}^{-\frac{D\tau-1}{6D}}\leqslant1.
\end{equation*}
Together with $|\zeta|\leqslant2|\nu|$, this proves the claimed estimate for $\zeta$.

Finally, we estimate the conjugacy. At every resonant step $j_{i}$, Cauchy's inequalities give
\begin{equation*}
    \|Y_{l_{j_{i}}}\|_{\tilde{k}}\leqslant C l_{j_{i}+1}^{\tilde{k}}N_{l_{j_{i}}}^{\frac{\tau}{2}}\varepsilon_{l_{j_{i}}}^{\frac{1}{2}}\leqslant C l_{j_{i}}^{-\frac{1}{4}}.
\end{equation*}
The same bound holds for $\|\widetilde{B}_{l_{q-1}}-I\|_{\tilde{k}}$ at a nonresonant step $q$. Hence all factors close to $I$ have a uniformly bounded product of norms. For the following product estimates, we use the norm in \eqref{fn} with $q=\tilde{k}$,
\begin{equation*}
    \|R_{\langle n,\theta\rangle/2}\|_{\tilde{k}}=\sum_{p=0}^{\tilde{k}}\frac{(\pi|n|)^{p}}{p!}\leqslant e^{\pi}|n|^{\tilde{k}}, \qquad |n|\geqslant1.
\end{equation*}
Combining this with $\|P_{l_{j_{i}}}^{-1}\|\leqslant4(2\|A\|)^{\frac{1}{2}}\gamma^{-\frac{1}{2}}|n_{l_{j_{i}}}^{*}|^{\frac{\tau}{2}}$, the choice of $C_{0}$ bounds the coefficient of each resonant factor by $2C_{0}$. Since $L^{\tilde{k}+\tau/2}\geqslant4C_{0}\geqslant2C_{0}$, \eqref{sep} implies, by induction on the number of resonances,
\begin{equation*}
    \|B_{1}\|_{\tilde{k}}\leqslant C\prod_{i=1}^{r}2C_{0}|n_{l_{j_{i}}}^{*}|^{\tilde{k}+\frac{\tau}{2}}\leqslant C|n_{l_{g}}^{*}|^{2\tilde{k}+\tau}\leqslant C|m|^{2\tilde{k}+\tau}.
\end{equation*}
Returning to the original $C^{\tilde{k}}$ norm changes only the constant $C$.

This completes the proof.
\end{proof}

\subsection{An upper bound for spectral gaps}

In this subsection, we prove part (1) of \hyperref[GYGJMGYQ]{Theorem~\ref*{GYGJMGYQ}} by combining \hyperref[r8]{Theorem~\ref*{r8}} with the upper bound for spectral gaps obtained via the Moser-P\"oschel argument.

 For each integer $q>3\tau+d$, choose
\begin{equation}\label{dt}
    D_{\tau}\coloneqq C(q,d,\tau)\bigg(1+\sum_{n\in\mathbb{Z}^{d}\setminus\{0\}}|n|^{-(q-3\tau)}\bigg), \qquad K\coloneqq D_{\tau}\gamma^{-3}\geqslant1,
\end{equation}
where the fixed constant $C(q,d,\tau)\geqslant1$ includes the constants in \cite[Lemma 4.1]{CAI2021109035}. The series converges since $q>3\tau+d$.

We recall the following upper bound for spectral gaps obtained by the Moser-P\"oschel argument, see \cite[Section 4.2]{CAI2021109035} and \cite[Sections 3.3 and 3.4]{HeCheng2025}.

\begin{theorem}[{\cite[Section 4.2]{CAI2021109035}}]\label{cw}
Let $\alpha\in\mathrm{DC}_{d}(\gamma,\tau)$ with $0<\gamma<1$ and $\tau>d$, and let $q>3\tau+d$ be an integer. Suppose that $V\in C^{q}(\mathbb{T}^{d},\mathbb{R})$ and a projectively $\mathbb{T}^{d}$-periodic map $B\in C^{q}(2\mathbb{T}^{d},\mathrm{SL}(2,\mathbb{R}))$ satisfies
\begin{equation}\label{constant}
    B(\theta+\alpha)^{-1}S_{E_{m}^{+}}^{V}(\theta)B(\theta)=\begin{pmatrix}1&\zeta\\0&1\end{pmatrix}, \qquad m\in\mathbb{Z}^{d}\setminus\{0\}, \qquad 0<\zeta<\frac{1}{2}.
\end{equation}
There exists $c_{0}=c_{0}(q,d,\tau)>0$ such that, if
\begin{equation}\label{cwh}
    0<t<\frac{1}{6}, \qquad K^{2}\|B\|_{q}^{6}\zeta^{3t}\leqslant c_{0},
\end{equation}
then $|G_{m}(V)|\leqslant\zeta^{1-t}$.
\end{theorem}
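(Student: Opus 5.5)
The plan is the Moser–P\"oschel perturbation argument at the right endpoint $E_m^{+}$. The idea is to show that moving the energy down by more than $\zeta^{1-t}$ already changes the fibered rotation number, so that the whole closed level set $[E_m^-,E_m^+]$ has length at most $\zeta^{1-t}$. The sign convention matters: since $E_m^+$ is the right endpoint of the gap, the gap lies in $(E_m^+-|G_m|,E_m^+)$.

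\emph{Step 1: conjugate the perturbed cocycle.} Write $E=E_m^{+}-\delta$ with $\zeta^{1-t}\leqslant\delta\leqslant 2\zeta^{1-t}$. Since $S_{E}^{V}=S_{E_m^+}^{V}-\delta\Pi$ with $\Pi=\begin{pmatrix}1&0\\0&0\end{pmatrix}$, conjugating by $B$ and using \eqref{constant} gives
\begin{equation*}
B(\theta+\alpha)^{-1}S_{E}^{V}(\theta)B(\theta)=\tilde{H}\big(I-\delta\,Q(\theta)\big),
\end{equation*}
where $Q(\theta)\coloneqq\tilde{H}^{-1}B(\theta+\alpha)^{-1}\Pi B(\theta)$ satisfies $\|Q\|_{q}\leqslant C\|B\|_{q}^{2}$. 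Rewrite this as $\tilde{H}e^{-\delta F(\theta)+O(\delta^{2}\|B\|^{4})}$ with $F\in C^{q}(\mathbb{T}^{d},\mathrm{sl}(2,\mathbb{R}))$. Here one uses that $B$ is projectively periodic, so $F$ is genuinely $\mathbb{T}^{d}$-periodic.

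\emph{Step 2: one KAM averaging step.} Remove all nonzero Fourier modes of $F$ by solving the cohomological equation $\tilde{H}^{-1}Z(\theta+\alpha)\tilde{H}-Z(\theta)=F(\theta)-\widehat{F}(0)$. Because $\tilde{H}$ is unipotent, solving this upper-triangular system requires inverting $e^{2\pi i\langle n,\alpha\rangle}-1$ up to three times. This costs $\gamma^{-3}|n|^{3\tau}$ per mode, which is summable against the decay $|\widehat{F}(n)|\leqslant C\|B\|_q^2|n|^{-q}$. This is exactly where $q>3\tau+d$ and the constants $D_\tau$, $K=D_\tau\gamma^{-3}$ of \eqref{dt} enter. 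The output is $\|Z\|_0\leqslant K\|B\|_q^{2}$, and the conjugated cocycle is
\begin{equation*}
\tilde{H}\exp\big(-\delta\widehat{F}(0)\big)e^{g(\theta)},\qquad \|g\|_{0}\leqslant C\delta^{2}K^{2}\|B\|_{q}^{4}.
\end{equation*}

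\emph{Step 3: read off the rotation number.} The lower-left entry of $\widehat{F}(0)$ is $\langle b_{21}\rangle=\int|B_{11}|^{2}\,d\theta$ up to sign, and it is bounded below by $c\|B\|_q^{-2}$. Hence the constant part $A_\delta\coloneqq\tilde{H}e^{-\delta\widehat{F}(0)}$ satisfies
\begin{equation*}
\operatorname{tr}A_\delta=2-\delta\zeta\,\langle|B_{11}|^2\rangle+O\big(\delta^{2}\|B\|_q^4\big).
\end{equation*}
For $\delta\geqslant\zeta^{1-t}$ this trace is strictly less than $2$, so $A_\delta$ is elliptic with $\|\rho(\alpha,A_\delta)\|_{\mathbb{R}/\mathbb{Z}}\gtrsim(\delta\zeta\|B\|_q^{-2})^{1/2}$. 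By \eqref{rho_holder}, the perturbation $e^{g}$ moves the rotation number by at most $\tilde c\|g\|_0^{1/2}\lesssim\delta K\|B\|_q^{2}$. The main obstacle is checking that this error is smaller than the elliptic gain, uniformly for $\delta\in[\zeta^{1-t},2\zeta^{1-t}]$. The required inequality is $\delta K\|B\|_q^{2}\ll(\delta\zeta)^{1/2}\|B\|_q^{-1}$, i.e.
\begin{equation*}
K^{2}\|B\|_q^{6}\,\delta\ll\zeta .
\end{equation*}
With $\delta\sim\zeta^{1-t}$ this is not directly of the form \eqref{cwh}, so I would first try a refined choice. Either take $\delta=\zeta^{1-t}$ and exploit that the $O(\delta^{2})$ remainder of Step 1 is absorbed into $\widehat{F}(0)$ at second order, or perform the averaging with $\delta$ measured relative to $\zeta$. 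The aim is to reduce the requirement to $K^{2}\|B\|_q^{6}\zeta^{3t}\leqslant c_0$; the exponent $3t$ is expected to come from balancing three factors of $\delta/\zeta=\zeta^{-t}$ against the gain. If this balance cannot be achieved, I would follow \cite[Section 4.2]{CAI2021109035} exactly.

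\emph{Step 4: conclude.} Conjugacy by $B e^{Z}$, which has degree $\deg B$, shifts $2\rho$ by the fixed quantity $\langle\deg B,\alpha\rangle$. Therefore $2\rho(\alpha,S_{E}^{V})\not\equiv\langle m,\alpha\rangle$ at $E=E_m^{+}-\zeta^{1-t}$. By monotonicity of the IDS, $E$ lies strictly below $E_m^{-}$, and hence $|G_m(V)|\leqslant\zeta^{1-t}$.
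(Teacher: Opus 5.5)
Your skeleton (displace to $E_m^{+}-\zeta^{1-t}$, conjugate by $B$, do one averaging step costing $K=D_\tau\gamma^{-3}$, show the conjugated rotation number leaves $0$) is the route the paper cites. However, Step~3, which is the core of the argument, is wrong, and you leave the decisive inequality unresolved.

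Write $u=B_{11}$, $v=B_{12}$, $a=\langle u^{2}\rangle$, $b=\langle uv\rangle$, $c=\langle v^{2}\rangle$, $\Delta=ac-b^{2}$. Using the second row of \eqref{constant}, your $Q$ equals $\begin{pmatrix}uv&v^{2}\\-u^{2}&-uv\end{pmatrix}$. This is nilpotent, so $I-\delta Q=e^{-\delta Q}$ exactly. Since $\widehat{Q}(0)^{2}=-\Delta I$,
\[
\operatorname{tr}\big(\tilde{H}e^{-\delta\widehat{Q}(0)}\big)=2\cos(\delta\sqrt{\Delta})+\zeta a\,\frac{\sin(\delta\sqrt{\Delta})}{\sqrt{\Delta}}=2+\delta\zeta a-\delta^{2}\Delta+\cdots .
\]
The first-order term has the opposite sign to yours. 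It pushes toward hyperbolicity; this is exactly the gap-opening mechanism of Lemma~\ref{r30} and Remark~\ref{r37}. With your sign, the displaced cocycle would be elliptic for every small $\delta>0$ and the gap would be empty. That contradicts Lemma~\ref{r30}, whose hypothesis $K^{2}\zeta\|B\|_q^{4}\leqslant c_0$ is implied by \eqref{cwh}.

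Moreover, at $\delta=\zeta^{1-t}$ the $\delta^{2}$ term you discard as an error exceeds $\delta\zeta a$ by a factor at least $\zeta^{-t}$. So your expansion cannot determine the sign of $\operatorname{tr}-2$ at all. Ellipticity comes from $-\delta^{2}\Delta$, and the proof needs a lower bound on the Gram determinant $\Delta$, which your proposal never mentions. Your gain $(\delta\zeta)^{1/2}\|B\|_q^{-1}$ and the unattainable requirement $K^{2}\|B\|_q^{6}\delta\ll\zeta$ are artifacts of this error.

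The missing estimate comes from \eqref{constant} itself. Its second row gives $B_{21}(\theta+\alpha)=u(\theta)$ and $B_{22}(\theta+\alpha)=v(\theta)-\zeta u(\theta)$, so $\det B=1$ yields $u(\theta+\alpha)v(\theta)-v(\theta+\alpha)u(\theta)=1+\zeta u(\theta+\alpha)u(\theta)$. This identity is unchanged if $v$ is replaced by $v_{\perp}=v-\frac{b}{a}u$. Integrating and applying Cauchy--Schwarz gives $1-\zeta a\leqslant2\sqrt{a}\|v_{\perp}\|_{L^{2}}$, hence $\Delta=a\|v_{\perp}\|_{L^{2}}^{2}\geqslant\frac14(1-\zeta a)^{2}\geqslant\frac1{16}$.

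With $\delta=\zeta^{1-t}$ one has $\delta\zeta a\leqslant\zeta^{t}\|B\|_{0}^{2}\,\delta^{2}$. So the constant part is elliptic with angle $\lambda_\delta\gtrsim\delta$ once $\zeta^{t}\|B\|_0^{2}$ is small. That smallness is the cube root of \eqref{cwh}, which is where $\|B\|_q^{6}\zeta^{3t}$ comes from, not from balancing three factors of $\delta/\zeta$.

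Next, \eqref{rho_holder} is too lossy here. Against a gain of order $\delta$ it gives an error of order $\|g\|_0^{1/2}\sim\delta K\|B\|_q^{2}$, which is not smaller. Instead, conjugate the constant part to a rotation by a constant $L$ with $\|L\|^{2}\lesssim\|\log A_\delta\|/\lambda_\delta\lesssim\|B\|_0^{2}$, as in \eqref{dl}. The remainder then becomes $O(\delta^{2}K^{2}\|B\|_q^{6})$. This is $\ll\lambda_\delta$ because $K^{2}\|B\|_q^{6}\zeta^{1-t}\leqslant K^{2}\|B\|_q^{6}\zeta^{3t}\leqslant c_0$. The projective lift increments are then pointwise comparable to $\lambda_\delta$, so the fibered rotation number is small and nonzero mod $\mathbb{Z}$, and your Step~4 applies. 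The same hypothesis also gives $K\delta\|B\|_q^{2}\ll1$, which the averaging step requires.
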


\begin{remark}
The argument in \cite[Section 4.2]{CAI2021109035} uses $t=\frac{1}{18}$, but the same proof applies to every $0<t<\frac{1}{6}$ by taking the energy displacement $\delta=\zeta^{1-t}$ and applying \cite[Lemma 4.2]{CAI2021109035} with $\kappa=t$. Under \eqref{cwh}, the error estimates are uniform in $t$, so $c_{0}$ can be chosen independently of $t$.
\end{remark}

\begin{proof}[{Proof of part (1) of {\hyperref[GYGJMGYQ]{Theorem~\ref*{GYGJMGYQ}}}}]
 Set
\begin{equation}\label{qt}
    q\coloneqq\lfloor3\tau+d+1\rfloor+1, \qquad t\coloneqq\frac{18\tau+8}{k-7\tau}.
\end{equation}
Since $\tau>d\geqslant1$, we have $3\tau+d<q<4\tau+2$. Moreover, $k>182\tau+2$ and $s<\frac{1}{3}$ give
\begin{equation*}
    \frac{k-4\tau-3}{2(1+s)}>\frac{3(178\tau-1)}{8}>4\tau+2>q, \qquad k-7\tau>175\tau+2>108\tau+48.
\end{equation*}
Thus $q$ is an admissible regularity in \hyperref[r8]{Theorem~\ref*{r8}} and $0<t<\frac{1}{6}$.

We apply the quantitative reducibility theorem to the Schr\"odinger cocycle $(\alpha,S_{E}^{V})$. It can be written as $(\alpha,A_{E}e^{f_{E}})$, where
\begin{equation*}
A_{E}=\begin{pmatrix}E&-1\\1&0\end{pmatrix}, \qquad f_{E}(\theta)=\begin{pmatrix}0&0\\V(\theta)&0\end{pmatrix}.
\end{equation*}
In particular, $\|f_{E}\|_{k}=\|V\|_{k}$. Since $|E|\leqslant2+\|V\|_{0}$ for $E\in\Sigma_{V,\alpha}$, we have $\|A_{E}\|\leqslant3$ when $\|V\|_{0}$ is sufficiently small. The threshold $\varepsilon$ in \hyperref[r8]{Theorem~\ref*{r8}} is therefore independent of the energy and the gap label. Consider a noncollapsed gap $G_{m}(V)$.
At $E=E_{m}^{+}$, the Schr\"odinger cocycle is not uniformly hyperbolic and its rotation number satisfies the resonance hypothesis of \hyperref[r8]{Theorem~\ref*{r8}}. 
There exists  $B\in C^{\tilde{k}}(2\mathbb{T}^{d},\mathrm{SL}(2,\mathbb{R}))$  such that
\begin{equation}
B(\theta+\alpha)^{-1}S_{E_{m}^{+}}^{V}(\theta)B(\theta)= H, \qquad H=\begin{pmatrix}1&\zeta\\0&1\end{pmatrix},
\end{equation}
with
\begin{equation}
\label{edge}
0<\zeta\leqslant\varepsilon^{\frac{1}{3D}}|m|^{-(k-4\tau-\frac{5}{2})}<\frac{1}{2}, \qquad \|B\|_{\tilde{k}}\leqslant C|m|^{2\tilde{k}+\tau}.
\end{equation}
Here $\tilde{k}\leqslant\frac{k-4\tau-3}{2(1+s)}$ and  $\zeta>0$ at the right endpoint of a noncollapsed gap, see \cite[Remark 4.1]{CAI2021109035}.

It remains to verify the smallness condition in \eqref{cwh}. Since $q<4\tau+2$ and $k-4\tau-\frac{5}{2}>k-7\tau$, \eqref{qt} gives
\begin{equation*}
    12q+6\tau<54\tau+24=3t(k-7\tau)<3t\bigg(k-4\tau-\frac{5}{2}\bigg).
\end{equation*}
Therefore, \eqref{edge}  imply
\begin{equation}\label{tildeP}
\begin{aligned}
    K^{2}\|B\|_{q}^{6}\zeta^{3t}
    &\leqslant C^{6}K^{2}\varepsilon^{\frac{t}{D}}|m|^{12q+6\tau-3t(k-4\tau-\frac{5}{2})}\\
    &\leqslant C^{6}K^{2}\varepsilon^{\frac{t}{D}}\leqslant c_{0}.
\end{aligned}
\end{equation}
 Consequently, by \hyperref[cw]{Theorem~\ref*{cw}},
\begin{equation*}
    |G_{m}(V)|\leqslant\zeta^{1-t}\leqslant\varepsilon^{\frac{1-t}{3D}}|m|^{-(1-t)(k-4\tau-\frac{5}{2})}.
\end{equation*}
Since $D>180$ and $t<\frac{1}{6}$, we have $\frac{1-t}{3D}>\frac{5}{18D}>\frac{1}{6(D-5)}$. Furthermore,
\begin{equation*}
\begin{aligned}
    (1-t)\bigg(k-4\tau-\frac{5}{2}\bigg)
    &=k-22\tau-\frac{21}{2}-t\bigg(3\tau-\frac{5}{2}\bigg)\\
    &>k-\frac{45}{2}\tau-\frac{21}{2}>k-34\tau.
\end{aligned}
\end{equation*}
As $0<\varepsilon<1$ and $|m|\geqslant1$, it follows that
\begin{equation*}
    |G_{m}(V)|\leqslant\varepsilon^{\frac{1-t}{3D}}|m|^{-(1-t)(k-4\tau-\frac{5}{2})}\leqslant\varepsilon^{\frac{1}{6(D-5)}}|m|^{-(k-34\tau)}.
\end{equation*}

\end{proof}

\subsection{A lower bound for spectral gaps}

We prove a lower bound following the Moser-P\"oschel argument in \cite[Section 6]{HeCheng2025,zhaozhiyan}. This estimate will be used to prove part (2) of \hyperref[GYGJMGYQ]{Theorem~\ref*{GYGJMGYQ}}. 

\begin{lemma}\label{r30}
Let $q>3\tau+d$ be an integer and let $K$ be as in \eqref{dt}.
Suppose that $E\in\Sigma_{V,\alpha},$ $0<N_{V,\alpha}(E)<1$ and $B\in C^{q}(2\mathbb{T}^{d},\mathrm{SL}(2,\mathbb{R}))$ satisfy
\begin{equation}\label{lp}
    B(\theta+\alpha)^{-1}S_{E}^{V}(\theta)B(\theta)=H,\qquad H=\begin{pmatrix}1&\zeta\\0&1\end{pmatrix},\qquad 0<\zeta<\frac{1}{2}.
\end{equation}
There are constants $c_{0},c_{1}>0$, depending only on $q,d,\tau$, such that
\begin{equation}\label{r32}
    K^{2}\zeta\|B\|_{q}^{4}\leqslant c_{0}
\end{equation}
implies
\begin{equation}\label{r33}
    (E-c_{1}\zeta\|B\|_{0}^{-2},E)\cap\Sigma_{V,\alpha}=\varnothing.
\end{equation}
In particular, $E$ is the right endpoint of a noncollapsed spectral gap. If its label is $m$, then
\begin{equation}\label{r34}
    |G_{m}(V)|\geqslant c_{1}\zeta\|B\|_{0}^{-2}.
\end{equation}
\end{lemma}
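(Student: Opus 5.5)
We propose to prove Lemma~\ref{r30} by the Moser--P\"oschel perturbation argument, run in the opposite direction to Theorem~\ref{cw}. Fix $\delta\in(0,c_{1}\zeta\|B\|_{0}^{-2})$ and set $E'=E-\delta$. Then
\begin{equation*}
S_{E'}^{V}(\theta)=S_{E}^{V}(\theta)-\delta\begin{pmatrix}1&0\\0&0\end{pmatrix},
\end{equation*}
and conjugating by $B$ gives
\begin{equation*}
B(\theta+\alpha)^{-1}S_{E'}^{V}(\theta)B(\theta)=H\big(I-\delta\,Q(\theta)\big),\qquad Q(\theta)\coloneqq H^{-1}B(\theta+\alpha)^{-1}\begin{pmatrix}1&0\\0&0\end{pmatrix}B(\theta).
\end{equation*}
Writing $B=\begin{pmatrix}b_{11}&b_{12}\\b_{21}&b_{22}\end{pmatrix}$, the lower-left entry of the perturbation is, up to the $H^{-1}$ factor (which only adds $O(\zeta)$ corrections), $-\delta\, b_{11}(\theta+\alpha)b_{11}(\theta)$ or a similar product of first-row/column entries. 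The goal is to show that the averaged system is hyperbolic: the relevant effective matrix is
\begin{equation*}
\begin{pmatrix}1&\zeta\\-\delta\beta&1\end{pmatrix},\qquad \beta\approx\langle b_{11}(\cdot+\alpha)b_{11}(\cdot)\rangle>0,
\end{equation*}
whose trace squared exceeds $4$ when $\zeta\delta\beta<0$. The sign must therefore be arranged so that energies slightly below $E$ are hyperbolic. This is consistent with $E$ being a right endpoint and $\zeta>0$, and it is where the positivity of $\zeta$ enters.

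The key steps are as follows.

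\textbf{Step 1.} Perform one or two KAM-type cohomological steps to remove the nonconstant part of $\delta Q$. Solve $Y(\theta+\alpha)-Y(\theta)=\delta(Q-\langle Q\rangle)$ componentwise via Fourier series. The Diophantine condition costs $|n|^{\tau}$ per coefficient, and summing $|\widehat{Q}(n)|\lesssim\|B\|_{q}^{2}|n|^{-q}$ against $|n|^{\tau}$ converges because $q>3\tau+d$. This is exactly where $D_{\tau}$ and $K=D_{\tau}\gamma^{-3}$ from \eqref{dt} enter. Because $H$ is not the identity, the homological equation is twisted by the shear. We handle it as in \cite{zhaozhiyan,HeCheng2025} by solving the entries in triangular order, which produces the extra factors of $\gamma^{-1}$ and powers of $\|B\|_{q}$.

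\textbf{Step 2.} After conjugation, obtain the constant matrix
\begin{equation*}
\begin{pmatrix}1+O(\delta\|B\|_{0}^{2})&\zeta+O(\delta\|B\|_{0}^{2})\\-\delta\beta&1+O(\delta\|B\|_{0}^{2})\end{pmatrix}
\end{equation*}
plus a nonconstant remainder of size $K^{2}\delta^{2}\|B\|_{q}^{4}$. Here $\beta\geqslant c\|B\|_{0}^{-2}$, from the lower bound $\int|b_{11}|^{2}$ or the corresponding column-norm quantity, using $\det B=1$. This lower bound gives the factor $\|B\|_{0}^{-2}$ in \eqref{r33}.

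\textbf{Step 3.} Show that the constant part is hyperbolic with eigenvalues $1\pm\sqrt{\zeta\delta\beta}$. Then verify, via \eqref{r32} and $\delta\leqslant c_{1}\zeta\|B\|_{0}^{-2}\leqslant c_{1}\zeta$, that the remainder is smaller than the hyperbolicity gap $\sqrt{\zeta\delta\beta}$, so that uniform hyperbolicity persists by a cone-field argument. Invoke Johnson's theorem to conclude $E'\notin\Sigma_{V,\alpha}$, which proves \eqref{r33}.

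\textbf{Step 4.} Since $E\in\Sigma_{V,\alpha}$ and $0<N_{V,\alpha}(E)<1$, the interval $(E-c_{1}\zeta\|B\|_{0}^{-2},E)$ lies inside a bounded gap whose right endpoint is $E$. The gap-labelling theorem assigns this gap a label $m$, and \eqref{r34} follows immediately.

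The main obstacle is Step 3: comparing the remainder with $\sqrt{\zeta\delta\beta}$ rather than with $\zeta$, and handling the lower-bound constant $\beta$ uniformly. For the comparison, we first try taking $\delta$ at the top of the allowed range, which makes $\sqrt{\zeta\delta\beta}\asymp\zeta\|B\|_{0}^{-2}$. We then check that the remainder terms, which are $O(\delta^{2}K^{2}\|B\|_{q}^{4})$ and $O(\delta\zeta K\|B\|_{q}^{2})$, are dominated by this quantity precisely under \eqref{r32}. If this comparison fails, we fall back on the rotation-number argument from \cite[Section 6]{zhaozhiyan}: the conjugated cocycle has zero degree change on $(E-\delta,E)$, so the rotation number is constant there, and the IDS must be constant on that interval.
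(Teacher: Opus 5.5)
Your route is the paper's: conjugate $S^{V}_{E-s}$ by $B$ to $H-sP$, and remove the nonconstant part by one averaging step. The paper takes this twisted homological solve from \cite[Lemma 4.1]{CAI2021109035}, and that is where $K$ and $q>3\tau+d$ enter. Then show the averaged constant part is hyperbolic, absorb the remainder, and apply Johnson's theorem.

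The first problem is the sign, which is the entire content of the lemma. You leave it as ``must be arranged'', and your tentative formula gets it wrong. It is a direct computation. We have $B(\theta+\alpha)^{-1}e_{1}=(B_{22}(\theta+\alpha),-B_{21}(\theta+\alpha))^{T}$, and the second row of \eqref{lp} gives $B_{21}(\theta+\alpha)=u(\theta)$ and $B_{22}(\theta+\alpha)=v(\theta)-\zeta u(\theta)$, with $u=B_{11}$, $v=B_{12}$. Hence the lower-left entry of $H-sP$ is $+s\,u(\theta)^{2}$. Its average $sa$, with $a=\langle u^{2}\rangle$, is positive. So the averaged matrix is approximately $\begin{pmatrix}1&\zeta\\ sa&1\end{pmatrix}$, which is hyperbolic because $\zeta>0$. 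With your $-\delta\beta$ and $\beta>0$ it would be elliptic, and $\langle b_{11}(\cdot+\alpha)b_{11}(\cdot)\rangle$ is not sign-definite in any case. The lower bound on $a$ also needs these two relations, not just $\det B=1$. Integrating $1=u(\theta+\alpha)\big(v(\theta)-\zeta u(\theta)\big)-v(\theta+\alpha)u(\theta)$ and applying Cauchy--Schwarz gives $1\leqslant2\sqrt{ac}+\zeta a\leqslant3\|B\|_{0}\sqrt{a}$.

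The more serious gap is the persistence criterion in Step~3. It is not enough for the remainder to be smaller than $\lambda_{s}\asymp\sqrt{s\zeta a}$. The averaged matrix is nearly parabolic, with eigenvectors at an angle of order $\lambda_{s}/\zeta$. For example, $\begin{pmatrix}1&\zeta\\ sa-\epsilon&1\end{pmatrix}$ is already elliptic once $\epsilon>sa$, even though $sa\asymp\lambda_{s}^{2}/\zeta\ll\lambda_{s}$. The paper diagonalizes $e^{b_{0}-sb_{1}}$ by $L_{s}\in\mathrm{SL}(2,\mathbb{R})$ with $\|L_{s}\|^{2}\leqslant C\zeta/\lambda_{s}$, and requires $\|s^{2}L_{s}P_{1,s}L_{s}^{-1}\|_{0}\leqslant\lambda_{s}/20$. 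So the remainder must be compared with $\lambda_{s}^{2}/\zeta\asymp sa$, not with $\lambda_{s}$. Use $\|P_{1,s}\|_{0}\leqslant CK^{2}\|B\|_{q}^{4}+Cs^{-1}\zeta^{2}\|B\|_{q}^{2}$, $s\leqslant c_{1}\zeta\|B\|_{0}^{-2}$ and $a\geqslant(9\|B\|_{0}^{2})^{-1}$. Then the ratio is at most $CK^{2}\zeta\|B\|_{q}^{4}$. That is exactly where \eqref{r32} is used. The bound is uniform in $s$, so it covers every energy in the interval, not only $s$ at the top of the range.

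Your fallback does not close the gap. Constancy of the rotation number on $(E-\delta,E)$ is equivalent to the conclusion, and it requires the same hyperbolicity.
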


\begin{proof}
Write $u\coloneqq B_{11}$, $v\coloneqq B_{12}$, and set $a\coloneqq[u^{2}]$, $b\coloneqq[uv]$, $c\coloneqq[v^{2}]$ and $\Delta\coloneqq ac-b^{2}$. By \eqref{lp} and $\det B=1$, direct calculation gives
\begin{equation}\label{pe}
    B(\theta+\alpha)^{-1}S_{E-s}^{V}(\theta)B(\theta)=H-sP(\theta),\qquad P(\theta)\coloneqq\begin{pmatrix}uv-\zeta u^{2}&v^{2}-\zeta uv\\-u^{2}&-uv\end{pmatrix},
\end{equation}
with
\begin{equation}\label{pm}
    \|P\|_{q}\leqslant C\|B\|_{q}^{2}.
\end{equation}

Choose $c_{1}\in(0,\frac{1}{2})$ sufficiently small. For $0<s\leqslant s_{*}\coloneqq c_{1}\zeta\|B\|_{0}^{-2}$, condition \eqref{r32} and $\|B\|_{q}\geqslant\|B\|_{0}\geqslant1$ imply
\begin{equation}\label{ss}
    Ks\|B\|_{q}^{2}\leqslant c_{1}K\zeta\|B\|_{q}^{2}\leqslant c_{1}K^{2}\zeta\|B\|_{q}^{4}\leqslant c_{1}c_{0}<\frac{1}{100}.
\end{equation}
By \eqref{pe}, \eqref{pm} and \eqref{ss}, we  apply \cite[Lemma 4.1]{CAI2021109035} with $\tilde{k}=q$ and $\hat{k}=0$. This is the averaging lemma recalled in \cite[Lemma 6.1]{HeCheng2025}. There exist $\widetilde{B}_{s}\in C^{0}(2\mathbb{T}^{d},\mathrm{SL}(2,\mathbb{R}))$ and $P_{1,s}\in C^{0}(\mathbb{T}^{d},\mathrm{gl}(2,\mathbb{R}))$ such that
\begin{equation}\label{av}
    \widetilde{B}_{s}(\theta+\alpha)^{-1}(H-sP(\theta))\widetilde{B}_{s}(\theta)=e^{b_{0}-sb_{1}}+s^{2}P_{1,s}(\theta),
\end{equation}
where
\begin{equation}\label{bp}
    b_{0}\coloneqq\begin{pmatrix}0&\zeta\\0&0\end{pmatrix},\qquad b_{1}\coloneqq\begin{pmatrix}b-\frac{\zeta}{2}a&c-\zeta b\\-a&-b+\frac{\zeta}{2}a\end{pmatrix},
\end{equation}
and
\begin{equation}\label{ar}
    \|P_{1,s}\|_{0}\leqslant CK^{2}\|B\|_{q}^{4}+Cs^{-1}\zeta^{2}\|B\|_{q}^{2}.
\end{equation}

We next estimate the determinant of $b_{0}-sb_{1}$. The second row of \eqref{lp} gives $B_{21}(\theta+\alpha)=u(\theta)$ and $B_{22}(\theta+\alpha)=v(\theta)-\zeta u(\theta)$. Using $\det B=1$, integration and the Cauchy-Schwarz inequality yield
\begin{equation*}
    1\leqslant2\sqrt{ac}+\zeta a\leqslant3\|B\|_{0}\sqrt{a}.
\end{equation*}
Hence
\begin{equation}\label{r35}
    a\geqslant\frac{1}{9\|B\|_{0}^{2}},\qquad 0\leqslant\Delta\leqslant ac\leqslant a\|B\|_{0}^{2}.
\end{equation}
By \eqref{bp},
\begin{equation}\label{dm}
    d(s)\coloneqq\det(b_{0}-sb_{1})+\frac{1}{4}s^{2}\zeta^{2}a^{2}=-s\zeta a+s^{2}\Delta.
\end{equation}
Since $s\|B\|_{0}^{2}\leqslant c_{1}\zeta$ and $c_{1}<\frac{1}{2}$, \eqref{r35} and \eqref{dm} imply
\begin{equation}\label{dh}
    \det(b_{0}-sb_{1})\leqslant d(s)\leqslant-\frac{1}{2}s\zeta a<0,\qquad \lambda_{s}\coloneqq\sqrt{-\det(b_{0}-sb_{1})}\geqslant\sqrt{\frac{s\zeta a}{2}}.
\end{equation}
Moreover, \eqref{bp} and $s\|B\|_{0}^{2}\leqslant c_{1}\zeta$ give
\begin{equation}\label{dn}
    \|b_{0}-sb_{1}\|\leqslant C(\zeta+s\|B\|_{0}^{2})\leqslant C\zeta.
\end{equation}
By \eqref{dh}, the matrix $b_{0}-sb_{1}$ has eigenvalues $\lambda_{s}$ and $-\lambda_{s}$. By \cite[proof of Theorem 6.2]{zhaozhiyan} and \cite[proof of Proposition 18]{Puig06}, there exists $L_{s}\in\mathrm{SL}(2,\mathbb{R})$ such that
\begin{equation}\label{dl}
    L_{s}e^{b_{0}-sb_{1}}L_{s}^{-1}=\begin{pmatrix}e^{\lambda_{s}}&0\\0&e^{-\lambda_{s}}\end{pmatrix},\qquad \|L_{s}\|^{2}\leqslant C\frac{\|b_{0}-sb_{1}\|}{\lambda_{s}}\leqslant C\frac{\zeta}{\lambda_{s}},
\end{equation}
where the last inequality follows from \eqref{dn}.
Set $F_{s}\coloneqq s^{2}L_{s}P_{1,s}L_{s}^{-1}$. Using \eqref{ar}, \eqref{dh}, \eqref{dl} and $\|L_{s}^{-1}\|=\|L_{s}\|$, we obtain
\begin{equation}\label{fr}
\begin{aligned}
    \frac{\|F_{s}\|_{0}}{\lambda_{s}}
    &\leqslant C\frac{\zeta}{\lambda_{s}^{2}}\bigg(K^{2}s^{2}\|B\|_{q}^{4}+s\zeta^{2}\|B\|_{q}^{2}\bigg)\\
    &\leqslant C\bigg(\frac{K^{2}s\|B\|_{q}^{4}}{a}+\frac{\zeta^{2}\|B\|_{q}^{2}}{a}\bigg)\\
    &\leqslant C\bigg(K^{2}\zeta\|B\|_{q}^{4}+\zeta^{2}\|B\|_{q}^{2}\|B\|_{0}^{2}\bigg)
    \leqslant CK^{2}\zeta\|B\|_{q}^{4}\leqslant Cc_{0}.
\end{aligned}
\end{equation}
The last line follows from \eqref{r32}, $s\leqslant c_{1}\zeta\|B\|_{0}^{-2}$ and \eqref{r35}. By \eqref{r32}, \eqref{dn} and \eqref{fr}, taking $c_{0}$ sufficiently small ensures
\begin{equation}\label{hc}
    0<\lambda_{s}\leqslant\|b_{0}-sb_{1}\|<1,\qquad \|F_{s}\|_{0}\leqslant\frac{\lambda_{s}}{20}.
\end{equation}
Thus \eqref{hc} implies uniform hyperbolicity of
$\big(\alpha,\operatorname{diag}(e^{\lambda_{s}},e^{-\lambda_{s}})+F_{s}\big)$.
By \eqref{pe}, \eqref{av} and \eqref{dl}, the cocycle $(\alpha,S_{E-s}^{V})$ is uniformly hyperbolic for every $0<s\leqslant s_{*}$.

Johnson's theorem \cite{Johnson198654} now gives \eqref{r33}. Since $E\in\Sigma_{V,\alpha}$ and $0<N_{V,\alpha}(E)<1$, the interval in \eqref{r33} lies in a bounded spectral gap with right endpoint $E$. Its length is at least $s_{*}=c_{1}\zeta\|B\|_{0}^{-2}$, proving \eqref{r34}.
\end{proof}

\begin{remark}\label{r37}
Suppose that \eqref{lp} holds with $\zeta$ replaced by $z$, where $0<|z|<\frac{1}{2}$ and $K^{2}|z|\|B\|_{q}^{4}\leqslant c_{0}$. For the energy displacement $s\operatorname{sgn}(z)$, the matrices in \eqref{bp}, with $\zeta=z$, satisfy
\begin{equation*}
    \det(b_{0}-s\operatorname{sgn}(z)b_{1})=-s|z|a+s^{2}\Delta-\frac{1}{4}s^{2}z^{2}a^{2}.
\end{equation*}
The estimates \eqref{r35}, \eqref{dh} and \eqref{fr}, with $\zeta$ replaced by $|z|$, therefore give
\begin{equation*}
    \bigg\{E-s\operatorname{sgn}(z):0<s\leqslant c_{1}|z|\|B\|_{0}^{-2}\bigg\}\cap\Sigma_{V,\alpha}=\varnothing.
\end{equation*}
If $E=E_{m}^{+}$ and $z<0$, continuity of the IDS would extend its level set to the right of $E_{m}^{+}$, contradicting the definition of $E_{m}^{+}$. Hence $z>0$ at $E=E_{m}^{+}$.
\end{remark}

\section{Construction of Schr\"odinger Operators with Polynomially Decaying Spectral Gaps}
Fix $\delta\in(0,1)$ and $k>361\tau$ as in \hyperref[GYGJMGYQ]{Theorem~\ref*{GYGJMGYQ}}(2). 
In this section, we introduce some definitions as follows
\begin{equation}
\label{r13} \varepsilon_{m}=m^{-21\tau}, \quad N_{l_{j}}=l_{j+1}, \quad l_{j}={M_{0}}^{{(\frac{3}{2})}^{j}}, \quad h_{j}={\frac{17\tau}{2\pi}}l_{j}^{-1}\ln l_{j},
\end{equation}
where   ${M_{0}}$ is sufficiently large  fixed constant . For every positive odd integer $j$, choose
\begin{equation*}
n_{j}=(\lceil1.95l_{j}\rceil,0,\ldots,0),\qquad \mathcal{K}=\{n_{j}:j\geqslant1\text{ is odd}\}.
\end{equation*}
Then $1.9l_{j}<|n_{j}|<2l_{j}$ for all these indices. In particular, $\mathcal{K}$ is infinite and no two adjacent shells contain elements of $\mathcal{K}$. This choice satisfies
\begin{equation*}
\begin{aligned}
\#\{n\in\mathcal{K}:2l_{j-2}< |n|\leqslant\ 2l_{j}\}\leqslant1, \\
\#\{n \in \mathcal{K}: 2l_{j-1}<|n|\leqslant 1.9l_{j}\}=0.
\end{aligned}
\end{equation*}
Denote
\begin{equation*}
\begin{aligned}
& \mathcal{A}_{j} \coloneqq\{n \in \mathbb{Z}^{d}|\ \ 2l_{j-1} \leqslant |n|< 2l_{j}\},  j\geqslant2, \\
& \mathcal{A}_{1} \coloneqq\{n \in \mathbb{Z}^{d}|\ \ 0 \leqslant |n| < 2l_{1}\},
\end{aligned}
\end{equation*}
and
\begin{equation*}
\tilde{h}_{j}=\begin{cases} h_{j}, \quad \text { as } \mathcal{K} \cap \mathcal{A}_{j} \neq \emptyset \\
\frac{3}{4} h_{j-1}, \quad \text { as } \mathcal{K} \cap \mathcal{A}_{j}=\emptyset.
\end{cases}
\end{equation*}
Moreover, set
\begin{equation}\label{pot}
V(\theta)\coloneqq\sum_{n\in\mathcal{K}}|n|^{-(1+\frac{\delta}{2})k}\cos(2\pi\langle n,\theta\rangle).
\end{equation} 
The series converges in $C^{k}$, since the factor $(2\pi)^{q}$ from every derivative of order $q\leqslant k$ is included in a constant depending only on $k$ and $d$, and
\begin{equation*}
\sum_{n\in\mathcal{K}}|n|^{k}|n|^{-{(1+\frac{\delta}{2})k}} =\sum_{n\in\mathcal{K}}|n|^{-\frac{\delta k}{2}}<\infty.
\end{equation*}
Moreover, $\|V\|_{k}$ can be made arbitrarily small by increasing ${M_{0}}$.
Set $q_{0}\coloneqq\lfloor3\tau+d\rfloor+1$ and take $K$ from \eqref{dt} with $q=q_{0}$. Let $c_{0},c_{1}$ be the constants in \hyperref[r30]{Lemma~\ref*{r30}} at this regularity. Choose $M_{0}$ sufficiently large so that the iteration conditions below hold and
\begin{equation}\label{is}
\begin{aligned}
    \|V\|_{k}&\leqslant\varepsilon_{0},\qquad K^{2}(1.9l_{1})^{147\tau-(1+\frac{\delta}{2})k}\leqslant c_{0},\\
    (1.9l_{1})^{122\tau-(1+\frac{\delta}{2})k}&\leqslant\frac{1}{4},\qquad c_{1}(1.9l_{1})^{\frac{\delta k}{2}+\frac{\tau}{10}}\geqslant1.
\end{aligned}
\end{equation}
These conditions can be imposed simultaneously since $k>361\tau$ and $\delta>0$.
Then the cocycle $(\alpha, S_{E}^{V}(\theta))$ of the Schr\"odinger operator $H_{V, \alpha, \theta}$ can be rewritten as 
\begin{equation*}
 S_{E}^{V}(\theta)\coloneqq \begin{pmatrix} E-V(\theta) & -1\\1 & 0
\end{pmatrix}
=A_{1}\mathrm{e}^{V(\theta)W} =A_{1}\prod_{n_{j} \in \mathcal{K} }\mathrm{e}^{V_{j}(\theta)W},
\end{equation*}
where $W=\begin{pmatrix}
0 & 0 \\1 & 0
\end{pmatrix}, 
A_{1}=\begin{pmatrix}
E & -1 \\1 & 0
\end{pmatrix}$ and
\begin{equation*}
V_{j}=\begin{cases} |n_{j}|^{-{(1+\frac{\delta}{2})k}}\cos{(2\pi\langle n_{j},\theta\rangle)}, \ \text { if } \{n_{j}\} \coloneqq \mathcal{K} \cap \mathcal{A}_{j},  \\
0, \quad \text { if } \mathcal{K} \cap \mathcal{A}_{j}=\emptyset.
\end{cases}
\end{equation*}
From now on we work in $\mathrm{SU}(1,1)$ coordinates and, to keep the notation, replace $A_{1}$ and $W$ by
\begin{equation*}
A_{1}=M\begin{pmatrix}E&-1\\1&0\end{pmatrix}M^{-1},\qquad W=M\begin{pmatrix}0&0\\1&0\end{pmatrix}M^{-1} =\frac{1}{2}\begin{pmatrix}-i&-i\\i&i\end{pmatrix}.
\end{equation*}
For a constant $A_{j}\in\mathrm{SU}(1,1)$, we write
$\rho_{j}=-\rho(\alpha,M^{-1}A_{j}M)$, with representatives chosen locally and with rotation numbers measured in turns. Thus an elliptic $A_{j}$ has the diagonal normal form $\operatorname{diag}(e^{{2\pi}i\rho_{j}},e^{-{2\pi}i\rho_{j}})$. In the resonance conditions below, ${\operatorname{dist}}(x,{\mathbb{Z}})$ denotes distance in $\mathbb{R}/\mathbb{Z}$.
For given $\eta>0$, $\alpha\in\mathbb{R}^{d}$ and $A\in\mathrm{SU}(1,1)$, we define a decomposition $C_{h}^{\omega}(\mathbb{T}^{d},\mathrm{su}(1,1))=C_{h}^{nre}(\eta)\oplus C_{h}^{re}(\eta)$ such that, for every $Y\in C_{h}^{nre}(\eta)$,
\begin{equation*}
    A^{-1}Y(\cdot+\alpha)A-Y(\cdot)\in C_{h}^{nre}(\eta), \qquad |A^{-1}Y(\cdot+\alpha)A-Y(\cdot)|_{h}\geqslant\eta|Y|_{h}.
\end{equation*}
Let $\mathbb{P}_{nre}$ and $\mathbb{P}_{re}$ be the standard projections from $C_{h}^{\omega}(\mathbb{T}^{d},\mathrm{su}(1,1))$ onto $C_{h}^{nre}(\eta)$ and $C_{h}^{re}(\eta)$, respectively.

\begin{theorem}\label{GJKDXL}
    Let $\alpha\in \mathrm{DC}_{d}(\gamma, \tau)$, $\gamma>0$, $\tau>d$. Suppose that ${k}>361\tau$, $A_{j}\in\mathrm{SU}(1,1)$ with
\begin{equation*}
\|A_{j}\|\leqslant3\prod_{r=1}^{j-1}(1+12l_{r}^{118\tau-{(1+\frac{\delta}{2})k}})\leqslant4,
\end{equation*}
Let $B^{(j)}\in C^{\omega}_{h_{j}}(2\mathbb{T}^{d}$, $\mathrm{SU}(1, 1))$ and $f_{j} \in C^{\omega}_{\tilde{h}_{j}}(\mathbb{T}^{d}$, $\mathrm{su}(1, 1))$ satisfying $|f_{j}|_{\tilde{h}_{j}}\leqslant \varepsilon_{l_{j}}^{{(2+\delta)k}}$, $|B^{(j)}|_{h_{j}}\leqslant \varepsilon_{l_{j}}^{-2}$, and suppose that $\operatorname{Ad}(B^{(j)})$ is $1$-periodic in every coordinate, as is the case for the conjugacies constructed below. Then there exist $B_{j} \in C^{\omega}_{h_{j}}(2\mathbb{T}^{d}$, $\mathrm{SU}(1, 1))$, ${f_{j+1}}\in C^{\omega}_{\tilde{h}_{j+1}}(\mathbb{T}^{d},\mathrm{su}(1, 1))$ and $A_{j+1} \in \mathrm{SU}(1, 1)$ such that 
    \begin{equation}
\label{r14} B_{j}(\theta+\alpha)(A_{j}e^{F_{j}(\theta)})B^{-1}_{j}(\theta)=A_{j+1}e^{F_{j+1}(\theta)},
\end{equation}
    where $F_{j}\in C^{k}(\mathbb{T}^{d},\mathrm{su}(1,1))$ is defined  by \begin{equation*}
e^{F_{j}}=e^{f_{j}}\prod \limits_{p\geqslant j} e^{\operatorname{Ad}(B^{(j)})(V_{p} W)},
\end{equation*} $B^{(1)}=I$ and $B^{(j+1)}=B_{j}B^{(j)}$ with estimates $|f_{j+1}|_{\tilde{h}_{j+1}}\leqslant\varepsilon_{l_{j+1}}^{{(2+\delta)k}}$,   $|B^{(j+1)}|_{h_{j+1}}\leqslant \varepsilon_{l_{j+1}}^{-2}$.
Furthermore, we obtain the following estimates for the two cases separately.

    \textbf{(Non-resonant case)} if for any $n\in \mathbb{Z}^{d}$ with $0<|n|\leqslant N_{l_{j}}$, we have \\${\operatorname{dist}}(2 \rho_{j}-\langle n, \alpha \rangle,{\mathbb{Z}})\geqslant \varepsilon_{l_{j}}^{\frac{1}{7}}$, then
    \begin{equation}
\label{n-r-c}
\begin{split}
|B_{j}-I|_{\tilde{h}_{j}} &\leqslant 12{l_{j}}^{127\tau-{(1+\frac{\delta}{2})k}}, \\
\|A_{j+1}-A_{j}\| &\leqslant 12\|A_{j}\|{l_{j}}^{118\tau-{(1+\frac{\delta}{2})k}},
\end{split}
\end{equation}
     and
     \begin{equation}
\label{10}
\begin{cases}
\|A_{j+1}-A_{j}-A_{j}\langle \operatorname{Ad}(B^{(j)}) V_{j}(\theta) W\rangle\| \leqslant30\|A_{j}\|l_{j}^{299\tau-{(2+\delta)k}}, \quad \mathcal{K} \cap \mathcal{A}_{j} \neq \emptyset,\\
\|A_{j+1}-A_{j}\| \leqslant30\|A_{j}\|l_{j}^{299\tau-{(2+\delta)k}}, \quad \mathcal{K} \cap \mathcal{A}_{j}=\emptyset .
\end{cases}
\end{equation}
\textbf{(Resonant case)} if there exists $n^{*}_{j} \in \mathbb{Z}^{d}$ with $0<|n^{*}_{j}|\leqslant N_{l_{j}}$ such that \\${\operatorname{dist}}(2 \rho_{j}-\langle n^{*}_{j}, \alpha \rangle,{\mathbb{Z}})<\varepsilon_{l_{j}}^{\frac{1}{7}}, $ then $B_{j}\coloneqq Q_{n^{*}_{j}}e^{Y_{j}}P_{j}, $ where \begin{equation*}
Q_{n^{*}_{j}}=\operatorname{diag}(e^{-{\pi}i\langle n^{*}_{j},\theta\rangle},e^{{\pi}i\langle n^{*}_{j},\theta\rangle}),
\end{equation*} $P_{j} \in \mathrm{SU}(1, 1)$, $Q_{n_{j}^{*}}\in C^{\omega}_{h_{j}}(2\mathbb{T}^{d},\mathrm{SU}(1,1))$ and $e^{Y_{j}}\in C^{\omega}_{\tilde{h}_{j}}(\mathbb{T}^{d}, \mathrm{SU}(1, 1)) $ with the following estimates 
\begin{equation*}
|e^{Y_{j}}-I|_{\tilde{h}_{j}}\leqslant 36{l_{j}}^{124\tau-{(1+\frac{\delta}{2})k}}, \qquad
\|P_{j}\|^{2}\leqslant 3\varepsilon_{l_{j}}^{-\frac{1}{7}}.
\end{equation*}
Moreover, (j+1)-th step is non-resonant. And if we rewrite  $A_{j+1}=(\begin{array}{cc}*& {b_{j+1}} \\ \overline{b_{j+1}} & *\end{array})$ and $P_{j} \operatorname{Ad}(B^{(j)}) \cdot(V_{j} W) P_{j}^{-1}=(\begin{array}{cc}* & T_{j} \\ \overline{T_{j}} & *\end{array})$, we can get
\begin{equation}
\label{12}
\begin{cases}
|b_{j+1}e^{-{2\pi}i(\rho_{j}-\frac{\langle n_{j}^{*}, \alpha \rangle}{2})}-\widehat{T}_{j}(n^{*}_{j})| \leqslant 324l_{j}^{263\tau-{(2+\delta)k}}, \quad \mathcal{K} \cap \mathcal{A}_{j} \neq \emptyset,\\
|b_{j+1}| \leqslant 324l_{j}^{263\tau-{(2+\delta)k}}, \quad \mathcal{K} \cap \mathcal{A}_{j}=\emptyset .
\end{cases}
\end{equation}
\end{theorem}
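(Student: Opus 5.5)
The proof is a single KAM step in the Hou--You style, carried out in $\mathrm{SU}(1,1)$ coordinates with the width $\tilde h_{j}$ and the truncation $N_{l_{j}}=l_{j+1}$. The first step reduces the perturbation to an analytic function of controlled size. The infinite product $\prod_{p\geqslant j}e^{\operatorname{Ad}(B^{(j)})(V_{p}W)}$ is split into the current factor $V_{j}$, which is a trigonometric monomial of frequency $n_{j}\in\mathcal{A}_{j}$ and hence entire, and the tail $p\geqslant j+1$. The $p=j$ term is estimated in $|\cdot|_{\tilde h_{j}}$ from
\begin{equation*}
|V_{j}|_{h}\leqslant|n_{j}|^{-(1+\frac{\delta}{2})k}e^{2\pi h|n_{j}|},
\end{equation*}
using $h_{j}|n_{j}|\leqslant\frac{17\tau}{\pi}\ln l_{j}$ and $|B^{(j)}|_{h_{j}}\leqslant\varepsilon_{l_{j}}^{-2}=l_{j}^{42\tau}$. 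This bookkeeping produces the exponents of the form $l_{j}^{c\tau-(1+\frac{\delta}{2})k}$. I will not expand the tail $p\geqslant j+1$ in the present step. It is carried along and becomes the $p\geqslant j+1$ part of $F_{j+1}$, which is why $F_{j+1}$ is defined with $\operatorname{Ad}(B^{(j+1)})$. Since $\operatorname{Ad}(B^{(j)})$ is $1$-periodic, the quantity $g_{j}\coloneqq\log(e^{f_{j}}e^{\operatorname{Ad}(B^{(j)})V_{j}W})$ is a genuine element of $C^{\omega}_{\tilde h_{j}}(\mathbb{T}^{d},\mathrm{su}(1,1))$, and its size is $O(l_{j}^{c\tau-(1+\frac{\delta}{2})k})$.

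\textbf{Non-resonant case.} I take the decomposition $C^{nre}\oplus C^{re}$ with $\eta=\varepsilon_{l_{j}}^{1/7}$. The Fourier modes $0<|n|\leqslant N_{l_{j}}$ then lie in $C^{nre}$ by the hypothesis, and the modes $|n|>N_{l_{j}}$ are removed by truncation with the loss $e^{-2\pi(\tilde h_{j}-\tilde h_{j+1})N_{l_{j}}}$. I will use the standard homological lemma of \cite{HouY12}: if $|g|\leqslant\eta^{2}$, there is $Y$ with $|Y|\leqslant|g|/\eta$ such that $e^{Y(\theta+\alpha)}A_{j}e^{g}e^{-Y(\theta)}=A_{j}e^{\mathbb{P}_{re}g+g'}$ and $|g'|\lesssim|g|^{2}/\eta$. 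I set $B_{j}=e^{Y}$; this gives \eqref{n-r-c}, with the loss $\eta^{-1}=l_{j}^{3\tau}$ accounting for the gap between $118\tau$ and $127\tau$. The resonant part $\mathbb{P}_{re}g$ consists of the constant mode only, so $A_{j+1}=A_{j}e^{\langle g\rangle}$. Expanding the exponential shows that the error in \eqref{10} is quadratic, of order $|g|^{2}$ plus the $f_{j}$ contribution $\varepsilon_{l_{j}}^{(2+\delta)k}$. This gives the $(2+\delta)k$ exponent. The new remainder $f_{j+1}$ collects $g'$, the truncation tail, and the conjugated $f_{j}$, all of size $\leqslant\varepsilon_{l_{j+1}}^{(2+\delta)k}=l_{j}^{-\frac{3}{2}\cdot21\tau(2+\delta)k}$. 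This bound is the place where the smallness of the shells must be checked carefully. When $\mathcal{K}\cap\mathcal{A}_{j+1}=\emptyset$, the next width is $\tilde h_{j+1}=\frac{3}{4}h_{j}$, which leaves room to absorb the $g$-quadratic term.

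\textbf{Resonant case.} I first diagonalize $A_{j}$ as $P_{j}A_{j}P_{j}^{-1}=\operatorname{diag}(e^{2\pi i\rho_{j}},e^{-2\pi i\rho_{j}})$. Lemma~\ref{lemma2.1}, combined with the lower bound on $|\sin 2\pi\rho_{j}|$ (the resonance forces $\rho_{j}$ away from $0$ by the Diophantine condition), gives $\|P_{j}\|^{2}\leqslant3\varepsilon_{l_{j}}^{-1/7}$. Next I remove the non-resonant modes by $e^{Y_{j}}$ as in the non-resonant case, with the resonant space now spanned by $\{0,\pm n_{j}^{*}\}$ in the off-diagonal entries. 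Finally I rotate by $Q_{n_{j}^{*}}$, which turns the mode $n_{j}^{*}$ into a constant. The off-diagonal entry of $A_{j+1}$ is then
\begin{equation*}
b_{j+1}\approx e^{2\pi i(\rho_{j}-\langle n_{j}^{*},\alpha\rangle/2)}\widehat{T}_{j}(n_{j}^{*}),
\end{equation*}
with error coming from quadratic terms, which yields \eqref{12}. The bound $|Q_{n^{*}_{j}}|_{h_{j}}\leqslant e^{\pi h_{j}N_{l_{j}}}$ together with $\|P_{j}\|$ gives $|B^{(j+1)}|_{h_{j+1}}\leqslant\varepsilon_{l_{j+1}}^{-2}$, provided $h_{j+1}N_{l_{j}}=O(\ln l_{j})$.

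The next step is non-resonant because after the rotation the new rotation number satisfies $2\rho_{j+1}\approx2\rho_{j}-\langle n_{j}^{*},\alpha\rangle$, which is within $\varepsilon_{l_{j}}^{1/7}$ of $0$. By the Diophantine condition $\gamma|n|^{-\tau}\geqslant\gamma N_{l_{j+1}}^{-\tau}\gg\varepsilon_{l_{j+1}}^{1/7}$, the quantity $\langle n,\alpha\rangle$ stays away from $0$ for $0<|n|\leqslant N_{l_{j+1}}$, so no $n$ in this range can resonate at step $j+1$. The main obstacle is the exponent bookkeeping: verifying that every remainder, after the losses $\varepsilon^{-2}$, $\eta^{-1}$, $e^{2\pi h|n|}$, and the width reduction, fits into $\varepsilon_{l_{j+1}}^{(2+\delta)k}$. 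Here $k>361\tau$ is exactly the assumption that makes the exponents such as $299\tau$ and $263\tau$ close.
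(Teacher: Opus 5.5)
\textbf{There is a genuine gap in how you produce $f_{j+1}$.} Your architecture matches the paper's. You merge $f_j$ with the current shell, carry the tail $p\geqslant j+1$ along, remove the non-resonant terms and truncate at $N_{l_j}$; in the resonant case you diagonalize by $P_j$, remove, and rotate by $Q_{n_j^*}$.

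The failure is in what you put into $f_{j+1}$. You include the quadratic remainder $g'$ of a one-shot Newton step, of size about $|g|^2/\eta\approx l_j^{c\tau-(2+\delta)k}$. You also include the conjugated $f_j$, of size up to $\varepsilon_{l_j}^{(2+\delta)k}=l_j^{-21\tau(2+\delta)k}$. The target, however, is
\begin{equation*}
|f_{j+1}|_{\tilde h_{j+1}}\leqslant\varepsilon_{l_{j+1}}^{(2+\delta)k}=l_j^{-\frac{63}{2}\tau(2+\delta)k},
\end{equation*}
which is smaller than both by an enormous power of $l_j$. No exponent bookkeeping with $k>361\tau$ closes this. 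Passing to the narrower strip $\frac34 h_j$ also does nothing to low Fourier modes. The only terms small enough to enter $f_{j+1}$ are Fourier modes above the cut-off, which gain the factor $e^{-\frac{\pi}{2}h_jN_{l_j}}=l_j^{-\frac{17\tau}{4}l_j^{1/2}}$ when the strip is shrunk.

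\textbf{The missing ingredient is the exact non-resonance cancellation of Lemma~\ref{remove}.} That lemma gives $Y\in C^{nre}_h(\eta)$ with $e^{Y(\theta+\alpha)}Ae^{F}e^{-Y}=Ae^{F_*}$ and $F_*\in C^{re}_h(\eta)$ \emph{exactly}, so no non-resonant remainder survives. The quadratic error is controlled by $|F_*-\mathbb{P}_{re}F|_h\leqslant2\eta^{-61/10}|F|_h^2$. The two cases then go as follows.
\begin{itemize}
\item Non-resonant case. One has $F_j^*=\langle F_j^*\rangle+\mathcal{R}_{N_{l_j}}F_j^*$. The quadratic error and the contribution of $f_j$ sit in $\langle F_j^*\rangle$ and are absorbed into $A_{j+1}=A_je^{\langle F_j^*\rangle}$; this is exactly the source of the $(2+\delta)k$ error in \eqref{10}. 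Then $f_{j+1}$ is only the truncation tail.
\item Resonant case. At low frequency only the diagonal constant and the upper-right mode $n_j^*$ may be retained. They become the constant $\tilde C_j$ absorbed into $A_{j+1}$, and $f_{j+1}$ consists only of modes $|n|>\tilde N_{l_j}$. The off-diagonal constant mode is non-resonant, since $\operatorname{dist}(2\rho_j,\mathbb{Z})\geqslant4\varepsilon_{l_j}^{1/7}$. If you kept it, as your set $\{0,\pm n_j^*\}$ suggests, it would become a nonconstant term of size comparable to $|g_j|$ after conjugation by $Q_{n_j^*}$.
\end{itemize}

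\textbf{A secondary point concerns $\eta$ in the non-resonant case.} There $A_j$ is not diagonalized and may be nearly parabolic, for instance right after a resonant step. So the hypothesis $\operatorname{dist}(2\rho_j-\langle n,\alpha\rangle,\mathbb{Z})\geqslant\varepsilon_{l_j}^{1/7}$ does not by itself give $\eta=\varepsilon_{l_j}^{1/7}$. The paper takes $\eta=\varepsilon_{l_j}^{3/7}=l_j^{-9\tau}$, and this is the actual origin of $127\tau=118\tau+9\tau$. Your $\eta^{-1}=l_j^{3\tau}$ would give $121\tau$, so it does not account for that gap.
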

\begin{proof}
Set 
\begin{equation*}
\begin{aligned}
e^{\tilde{F}_{j}(\theta)}=e^{f_{j}(\theta)}e^{\operatorname{Ad}(B^{(j)})V_{j}(\theta)W}.
\end{aligned}
\end{equation*}
 If $\mathcal{K}\cap\mathcal{A}_{j}\neq\emptyset$, then \eqref{r13} yields $|V_{j}(\theta)|_{h_{j}} \leqslant l_{j}^{34\tau-{(1+\frac{\delta}{2})k}} $, together with the hypotheses on $f_{j}$ stated in the \hyperref[GJKDXL]{Theorem~\ref*{GJKDXL}} we have
\begin{equation}
\label{r16}
\begin{aligned}
|\tilde{F}_{j}|_{\tilde{h}_{j}}\leqslant 3{l_{j}}^{118\tau-{(1+\frac{\delta}{2})k}}.
\end{aligned}
\end{equation}
And if $\mathcal{K} \cap \mathcal{A}_{j}= \emptyset$, then $\tilde{F}_{j}=f_{j}$ and \eqref{r16} still holds.

\textbf{Non-resonant case:} We define
\begin{equation*}
\begin{aligned}
\Lambda_{j}=\{f\in C_{\tilde{h}_{j}}^{\omega}(\mathbb{T}^{d}, \mathrm{su}(1, 1))| ~f(\theta)=\sum_{n\in \mathbb{Z}^{d}, 0<|n|\leqslant N_{l_{j}}}\widehat{f}(n)e^{{2\pi}i\langle n, \theta \rangle}\}.
\end{aligned}
\end{equation*}

For any $0<|n|\leqslant N_{l_{j}}$, we have
\begin{equation*}
\begin{aligned}
{\operatorname{dist}}(\langle n, \alpha\rangle,{\mathbb{Z}})\geqslant\varepsilon_{l_{j}}^{\frac{1}{7}}.
\end{aligned}
\end{equation*}
By the Diophantine and non-resonance conditions, together with $\|A_{j}\|\leqslant4$,  for any $Y\in\Lambda_{j},$ we have
\begin{equation*}
\begin{aligned}
A_{j}^{-1}Y_{j}(\theta+\alpha)A_{j}-Y_{j}(\theta)\in C_{{\tilde{h}_{j}}}^{nre}(\varepsilon^{\frac{3}{7}}_{l_{j}}),
\end{aligned}
\end{equation*}
and
\begin{equation*}
\begin{aligned}
|A_{j}^{-1}Y_{j}(\theta+\alpha)A_{j}-Y_{j}(\theta)|_{\tilde{h}_{j}}\geqslant \varepsilon_{l_{j}}^{\frac{3}{7}}|Y|_{\tilde{h}_{j}}.
\end{aligned}
\end{equation*}	
Thus $\Lambda_{j}\subseteq C_{{\tilde{h}_{j}}}^{nre}
(\varepsilon_{l_{j}}^{\frac{3}{7}}).$

We recall the following lemma from \cite{HeCheng2025}, which will be used to remove the non-resonant terms.

\begin{lemma}\label{remove}
   Assume that $A \in \mathrm{SU}(1, 1)$ and $\eta \leqslant (2|A|)^{-30}$. Then, for any $F \in C_{h}^{\omega}(\mathbb{T}^{d}, \mathrm{su}(1, 1))$ with $|F|_{h} < \eta^{61/30}$, there exists $Y \in C_{h}^{nre}(\eta)$, $F_{*} \in C_{h}^{re}(\eta)$ such that
\begin{equation*}
e^{Y(\theta+\alpha)} (A e^{F(\theta)}) e^{-Y(\theta)} = A e^{F_{*}(\theta)},
\end{equation*}
with estimates
\begin{equation*}
|Y|_{h} \leqslant 2\eta^{-1}|F|_{h}, \quad |F_{*} - \mathbb{P}_{re}F|_{h} \leqslant 2\eta^{-61/10}|F|_{h}^{2}.
\end{equation*}
\end{lemma}

For $\ell_1$ large enough , one can check $\varepsilon_{l_{j}}^{\frac{3}{7}}\leqslant(2\|{A_{j}}\|)^{-30}$ and $|\tilde{F}_{j}|_{\tilde{h}_{j}}\leqslant(\varepsilon_{l_{j}}^{\frac{3}{7}})^{\frac{61}{30}}$, then by \hyperref[remove]{Lemma~\ref*{remove}} , there exist $Y_{j}\in C^{nre}_{\tilde{h}_{j}}(\varepsilon_{l_{j}}^{\frac{3}{7}})$ and $F^{*}_{j}\in C^{re}_{\tilde{h}_{j}}(\varepsilon_{l_{j}}^{\frac{3}{7}})$  such that
\begin{equation*}
\begin{aligned}
e^{Y_{j}(\theta+\alpha)}(A_{{j}}e^{\tilde{F}_{j}(\theta)})e^{-Y_{j}(\theta)}=A_{{j}} e^{F^{*}_{j}(\theta)},
\end{aligned}
\end{equation*}
with the estimates
\begin{equation}
\label{r17}
\begin{split}
|Y_{j}|_{\tilde{h}_{j}} \leqslant2\varepsilon_{l_{j}}^{-\frac{3}{7}}|\tilde{F}_{j}|_{\tilde{h}_{j}} \leqslant6{l_{j}}^{127\tau-{(1+\frac{\delta}{2})k}}, \\
|F^{*}_{j}-\mathbb{P}_{re}\tilde{F}_{j}|_{\tilde{h}_{j}} \leqslant18{l_{j}}^{299\tau-{(2+\delta)k}}.
\end{split}
\end{equation}
Combined with ${k>361\tau}$, we obtain that
\begin{equation}
\label{r18}
\begin{aligned}
|F^{*}_{j}|_{\tilde{h}_{j}} \leqslant 2 |\tilde{F}_{j}|_{\tilde{h}_{j}}\leqslant 6{l_{j}}^{118\tau-{(1+\frac{\delta}{2})k}}.
\end{aligned}
\end{equation}
Let $B_{j}=e^{Y_{j}}$, then we have
\begin{equation}
\label{resonantB_j}
\begin{aligned}
|B_{j}-I|_{\tilde{h}_{j}} \leqslant2|Y_{j}|_{\tilde{h}_{j}}\leqslant 12{l_{j}}^{127\tau-{(1+\frac{\delta}{2})k}},
\end{aligned}
\end{equation}
which together with the estimates about $B^{(j)}$ in \hyperref[GJKDXL]{Theorem~\ref*{GJKDXL}} yield
\begin{equation*}
\begin{aligned}
|B^{(j+1)}|_{h_{j+1}}\leqslant|B^{(j)}|_{h_{j}}|B_{j}|_{h_{j}}\leqslant2\varepsilon_{l_{j}}^{-2}\leqslant\varepsilon_{l_{j+1}}^{-2}.
\end{aligned}
\end{equation*}

Note that  $\Lambda_{j}\subseteq C_{\tilde{h}_{j}}^{nre}
(\varepsilon_{l_{j}}^{\frac{3}{7}})$, then for $F^{*}_{j}\in C^{re}_{\tilde{h}_{j}}(\varepsilon_{l_{j}}^{\frac{3}{7}})$,  we have
\begin{equation*}
\begin{aligned}
F^{*}_{j}=\langle F^{*}_{j}\rangle+\mathcal{R}_{N_{l_{j}}}F^{*}_{j}.
\end{aligned}
\end{equation*}
Let
\begin{equation*}
\begin{aligned}
e^{F^{*}_{j}}\eqqcolon e^{\langle F^{*}_{j}\rangle}e^{f_{j+1}},
\end{aligned}
\end{equation*}
with estimate
\begin{equation*}
\begin{aligned}
|f_{j+1}|_{\tilde{h}_{j+1}} \leqslant 2|\mathcal{R}_{N_{l_{j}}}F^{*}_{j}|_{\frac{3}{4}h_{j}}\leqslant 2e^{-{\frac{\pi}{2}}h_{j}N_{l_{j}}}|F_{j}^{*}|_{\tilde{h}_{j}}<\varepsilon^{{(2+\delta)k}}_{l_{j+1}}.
\end{aligned}
\end{equation*}
Let $A_{j+1}\coloneqq A_{j} e^{\langle F^{*}_{j}\rangle}$ and combining with \eqref{r18}, we have
\begin{equation*}
\begin{aligned}
\|A_{j+1}-A_{j}\| \leqslant 2|F_{j}^{*}|_{\tilde{h}_{j}}\|A_{j}\|\leqslant12\|A_{j}\|{l_{j}}^{118\tau-{(1+\frac{\delta}{2})k}},
\end{aligned}
\end{equation*}
and if we expand $A_{j+1}$ to higher order, we have
\begin{equation*}
\begin{aligned}
\|A_{j}+A_{j}\langle F^{*}_{j}\rangle-A_{j+1}\| \leqslant  \|A_{j}\||F^{*}_{j}|^{2}_{\tilde{h}_{j}} \leqslant{36}\|A_{j}\|l_{j}^{236\tau-{(2+\delta)k}}.
\end{aligned}
\end{equation*}
Moreover, \eqref{r17} yields
\begin{equation*}
\begin{aligned}
\|A_{j}\langle F^{*}_{j}\rangle-A_{j}\langle\tilde{F}_{j}\rangle\| \leqslant\|A_{j}\||F^{*}_{j}-\mathbb{P}_{re}\tilde{F}_{j}|_{\tilde{h}_{j}} \leqslant18\|A_{j}\|{l_{j}}^{299\tau-{(2+\delta)k}},
\end{aligned}
\end{equation*}
and
\begin{equation*}
\begin{aligned}
\|A_{j}\langle\tilde{F}_{j}(\theta)\rangle-A_{j}\langle \operatorname{Ad}(B^{(j)}) V_{j}(\theta) W\rangle\|\leqslant 2\|A_{j}\||f_{j}|_{\tilde{h}_{j}}\leqslant 2\|A_{j}\|\varepsilon_{l_{j}}^{{(2+\delta)k}}.
\end{aligned}
\end{equation*}

Combined with the above formulas and the condition ${k}>200\tau$, we arrive at
\begin{equation*}
\begin{aligned}
\|A_{j+1}-A_{j}-A_{j} \langle \operatorname{Ad}(B^{(j)}) V_{j}(\theta) W \rangle | \leqslant30\|A_{j}\|l_{j}^{299\tau-{(2+\delta)k}}.
\end{aligned}
\end{equation*}

\textbf{Resonant case:}
assume that there exists $n^{'}_{j}\neq n^{*}_{j}$ satisfying 
${\operatorname{dist}}(2 \rho_{j}-\langle n_{j}^{\prime}, \alpha\rangle,{\mathbb{Z}})<\varepsilon_{l_{j}}^{\frac{1}{7}}$, then by the Diophantine condition, we have
\begin{equation*}
\gamma|n_{j}^{\prime} - n^{*}_{j}|^{-\tau} \leqslant{\operatorname{dist}}(\langle n_{j}^{\prime} - n^{*}_{j}, \alpha\rangle,{\mathbb{Z}}) <2 \varepsilon_{l_{j}}^{\frac{1}{7}},
\end{equation*}
which implies that \begin{equation*}
|n_{j}^{\prime}|> \tilde{N}_{l_{j}} \gg  N_{l_{j}},
\end{equation*}
where
\begin{equation*}
\tilde{N}_{l_{j}}=(2^{-1}\gamma)^{\tau^{-1}} \varepsilon_{l_{j}}^{-\frac{1}{7\tau}}-N_{l_{j}}.
\end{equation*}
Therefore, $n^{*}_{j}$ is the unique resonant point.

In the resonant normal forms below, we use compatible local representatives of the angular quantities. The resonant condition and $\alpha\in\mathrm{DC}_{d}(\gamma,\tau)$ yield
\begin{equation*}
\begin{aligned}
{\operatorname{dist}}(2\rho_{j},{\mathbb{Z}}) \geqslant{\operatorname{dist}}(\langle n^{*}_{j}, \alpha\rangle,{\mathbb{Z}})- \varepsilon^{\frac{1}{7}}_{l_{j}} \geqslant 4 \varepsilon^{\frac{1}{7}}_{l_{j}}.
\end{aligned}
\end{equation*}
By Lemma 8.1 of \cite{HouY12}, we can find $P_{j} \in \mathrm{SU}(1, 1)$ such that
\begin{equation*}
A^{\prime}_{j} \coloneqq P_{j} A_{j} P_{j}^{-1} =(\begin{array}{cc}e^{{2\pi}i \rho_{j}} & 0 \\ 0 & e^{-{2\pi}i \rho_{j}}\end{array})
\end{equation*}
\begin{equation}\label{pd}
    \|P_{j}\|^{2}\leqslant\frac{C}{|\sin(2\pi\rho_{j})|}\leqslant C\gamma^{-1}|n_{j}^{*}|^{\tau}\leqslant3\varepsilon_{l_{j}}^{-1/7}.
\end{equation}
Then, we can conjugate the cocycle $(\alpha, A_{j}e^{\tilde{F_{j}}})$ to $(\alpha, A_{j}^{\prime}e^{g_{j}})$ where $g_{j} =P_{j}\tilde{F_{j}}P^{-1}_{j}$ and with the following estimate
\begin{equation*}
\begin{aligned}
|g_{j}|_{\tilde{h}_{j}} \leqslant\|P_{j}\|^{2}|\tilde{F}_{j}|_{\tilde{h}_{j}} \leqslant9{l_{j}}^{121\tau-{(1+\frac{\delta}{2})k}}.
\end{aligned}
\end{equation*}

Denote ${C}_{{\tilde{h}_{j}}}^{nre}(\varepsilon_{l_{j}}^{\frac{1}{7}})$ by the set of all $Y=(\begin{array}{cc}i a(\theta) & y(\theta) \\ {\overline{y(\theta)}} & -i a(\theta)\end{array}) \in {C}^{\omega}_{{\tilde{h}_{j}}}(\mathbb{T}^{d}, \mathrm{su}(1, 1))$ with
\begin{equation*}
a(\theta)=\sum_{n \in \mathbb{Z}^{d}, 0<|n|\leqslant \tilde{N}_{l_{j}}} \widehat{a}(n) e^{{2\pi}i\langle n, \theta\rangle}, \quad y(\theta)=\sum_{n \in \mathbb{Z}^{d}, 0 \leqslant|n|\leqslant \tilde{N}_{l_{j}}, n \neq n^{*}_{j}} \widehat{y}(n) e^{{2\pi}i\langle n, \theta\rangle},
\end{equation*}
and by ${C}_{{\tilde{h}_{j}}}^{re}$ the set of all $Z \in C^{\omega}_{\tilde{h}_{j}}(\mathbb{T}^{d}, \mathrm{su}(1, 1))$ of the form
\begin{equation*}
Z(\theta)=(\begin{array}{cc} i a & 0 \\
0 & -i a
\end{array})+(\begin{array}{cc}
0 & \widehat{z}(n^{*}_{j})e^{{2\pi}i\langle n^{*}_{j}, \theta\rangle} \\
\overline{\widehat{z}}(n^{*}_{j}) e^{-{2\pi}i\langle n^{*}_{j}, \theta\rangle} & 0
\end{array})+\sum_{n \in \mathbb{Z}^{d}, |n|> \tilde{N}_{l_{j}}} \widehat{Z}(n) e^{{2\pi}i\langle n, \theta\rangle},
\end{equation*}
where $a \in \mathbb{R}$, $\widehat{z}(n^{*}_{j})\in\mathbb{C} $ and the Fourier coefficients of $Z$ satisfy the reality relations determined by $Z(\theta)\in\mathrm{su}(1,1)$ on the real torus.

For $\eta=\varepsilon_{l_{j}}^{1/7}$, 
applying \hyperref[remove]{Lemma~\ref*{remove}} to $(\alpha,A_{j}' e^{g_{j}(\theta)})$, there exist $Y_{j} \in C^{nre}_{\tilde{h}_{j}}$ and $g^{*}_{j} \in {C}_{\tilde{h}_{j}}^{r e}(\varepsilon_{l_{j}}^{\frac{1}{7}})$ such that
\begin{equation*}
\begin{aligned}
e^{Y_{j}(\theta+\alpha)}(A^{\prime}_{j} e^{g_{j}(\theta)}) e^{-Y_{j}(\theta)}=A^{\prime}_{j} e^{g^{*}_{j}(\theta)},
\end{aligned}
\end{equation*}
with
\begin{equation}
\label{two}
\begin{split}
|Y_{j}|_{\tilde{h}_{j}} \leqslant 2\varepsilon_{l_{j}}^{-\frac{1}{7}}|g_{j}|_{\tilde{h}_{j}}\leqslant 18{l_{j}}^{124\tau-{(1+\frac{\delta}{2})k}}, \\
|g^{*}_{j}-\mathbb{P}_{re}g_{j}|_{\tilde{h}_{j}} \leqslant 2 \varepsilon_{l_{j}}^{-\frac{61}{70}}|g_{j}|_{\tilde{h}_{j}}^{2} \leqslant 162{l_{j}}^{261\tau-{(2+\delta)k}} .
\end{split}
\end{equation}
Thus, we can get
\begin{equation}
\label{r19}
\begin{aligned}
|e^{Y_{j}}-I|_{\tilde{h}_{j}} \leqslant2|Y_{j}|_{\tilde{h}_{j}} \leqslant 36{l_{j}}^{124\tau-{(1+\frac{\delta}{2})k}}, \quad |g^{*}_{j}|_{\tilde{h}_{j}}\leqslant2|g_{j}|_{\tilde{h}_{j}} \leqslant 18{l_{j}}^{121\tau-{(1+\frac{\delta}{2})k}}.
\end{aligned}
\end{equation}
Furthermore, we can write $g^{*}_{j}(\theta) \in {C}_{\tilde{h}_{j}}^{re}$ in the following form
\begin{equation*}
\begin{aligned}
g^{*}_{j}(\theta)=(\begin{array}{cc}i a_{j} & b^{*}_{j} e^{{2\pi}i\langle  n^{*}_{j}, \theta\rangle} \\ \bar{b}^{*}_{j} e^{-{2\pi}i\langle n^{*}_{j}, \theta\rangle} & -i a_{j}\end{array})+\mathcal{R}_{\tilde{N}_{l_{j}}}g^{*}_{j},
\end{aligned}
\end{equation*}
where $a_{j} \in \mathbb{R}, b^{*}_{j}=(\widehat{{g}^{*}_{j}}(n_{j}^{*}))_{12} \in \mathbb{C}$ with the estimates
\begin{equation*}
\begin{aligned}
&|a_{j}| \leqslant |g^{*}_{j}|_{\tilde{h}_{j}}\leqslant 18{l_{j}}^{121\tau-{(1+\frac{\delta}{2})k}}, \quad|b^{*}_{j}| \leqslant  |g^{*}_{j}|_{\tilde{h}_{j}} e^{-{2\pi}|n^{*}_{j}| {\tilde{h}_{j}}}< 18{l_{j}}^{121\tau-{(1+\frac{\delta}{2})k}}.
\end{aligned}
\end{equation*}
Now we impose a rotation $Q_{n^{*}_{j}}(\theta)$ on the cocycle $(\alpha, A'_{j}e^{g^{*}_{j}(\theta)})$, then 
\begin{equation*}
\begin{aligned}
Q_{n^{*}_{j}}(\theta+\alpha)(A^{\prime}_{j} e^{g^{*}_{j}(\theta)}) Q_{n^{*}_{j}}^{-1}(\theta)=\tilde{A}_{j} e^{g^{\prime}_{j}},
\end{aligned}
\end{equation*}
where
\begin{equation*}
\begin{aligned}
\tilde{A}_{j}= Q_{n^{*}_{j}}(\theta+\alpha)A_{j}^{\prime}Q_{n^{*}_{j}}^{-1}(\theta)=&(\begin{array}{cc} e^{{2\pi}i(\rho_{j}-\frac{\langle n^{*}_{j}, \alpha\rangle}{2})} & 0 \\
0 & e^{-{2\pi}i(\rho_{j}-\frac{\langle n^{*}_{j}, \alpha\rangle}{2})}
\end{array}), \\
g^{\prime}_{j}= &Q_{n^{*}_{j}} g^{*}_{j}(\theta) Q_{n^{*}_{j}}^{-1}.
\end{aligned}
\end{equation*}
And we can write
\begin{equation*}
g^{\prime}_{j}=\tilde{C}_{j}  +\widetilde{g}_{j},
\end{equation*}
where $\tilde{C}_{j} \coloneqq(\begin{array}{cc}i a_{j} & b^{*}_{j} \\ \bar{b}^{*}_{j} & -i a_{j}\end{array})$, $ \widetilde{g}_{j} \coloneqq Q_{n^{*}_{j}} (\mathcal{R}_{{\tilde{N}_{l_{j}}}}g^{*}_{j}) Q_{n^{*}_{j}}^{-1} $.

Denote $ B_{j}=Q_{n^{*}_{j}}  e^{Y_{j}}  P_{j} $, $A_{j+1}=\tilde{A}_{j} e^{\tilde{C}_{j}}$, $ e^{f_{j+1}(\theta)}=e^{-\tilde{C}_{j}}e^{\tilde{C}_{j}+\widetilde{g}_{j}}$ and $spec(\tilde{C}_{j})=\pm \lambda_{j}$, 
 one can check that
\begin{equation*}
\begin{aligned}
B_{j}(\theta+\alpha)(A_{j} e^{\tilde{F}_{j}(\theta)}) B_{j}^{-1}(\theta)=A_{j+1} e^{f_{j+1}(\theta)},
\end{aligned}
\end{equation*}
together with \eqref{r19} and Baker-Campbell Theorem yield that
\begin{equation*}
\begin{aligned}
|f_{j+1}|_{\tilde{h}_{j+1}}\leqslant 2|Q_{n^{*}_{j}}|_{\frac{3}{4}h_{j}}^{2} \sum_{|n|>\tilde{N}_{l_{j}}}|g^{*}_{j}(n)| e^{{\frac{3\pi}{2}}|n|h_{j}}\leqslant 2e^{-{\pi}  N_{l_{j}} h_{j}}|g^{*}_{j}|_{\tilde{h}_{j}}<\varepsilon_{l_{j+1}}^{{(2+\delta)k}}.
\end{aligned}
\end{equation*}
 By  $M_{0}$ large enough, we also have $|g_{j}^{*}|_{\tilde{h}_{j+1}}\leqslant\varepsilon_{l_{j}}^{1/7}/2$. Let
$A_{j+1}=e^{C_{j+1}}$. Then
\begin{equation*}
\begin{aligned}
\|C_{j+1}\|\leqslant 2({2\pi}|\rho_{j}-\frac{\langle n^{*}_{j},\alpha\rangle}{2}|+|g_{j}^{*}|_{\tilde{h}_{j+1}})
\leqslant{8}\varepsilon_{l_{j}}^{1/7}.
\end{aligned}
\end{equation*}
and
\begin{equation*}
|\rho_{j+1}|{\leqslant\frac{\|C_{j+1}\|}{2\pi}}
<2\varepsilon_{l_{j}}^{1/7}.
\end{equation*}
For every $n\in\mathbb{Z}^{d}$ with $0<|n|\leqslant N_{l_{j+1}}$, we have
\begin{equation}\label{nr}
\operatorname{dist}\big(2\rho_{j+1}-\langle n,\alpha\rangle,\mathbb{Z}\big)\geqslant\operatorname{dist}\big(\langle n,\alpha\rangle,\mathbb{Z}\big)-2|\rho_{j+1}|>\varepsilon_{l_{j+1}}^{\frac{1}{7}},
\end{equation} 
which implies
(j+1)-th step is non-resonant.

By \eqref{nr}, every resonant step is followed by a non-resonant step. Hence, 
\begin{equation*}
\begin{aligned}
|B^{(j+1)}|_{h_{j+1}} \leqslant2|Q_{n_{j}^{*}}|_{h_{j+1}}\|P_{j}\||B^{(j)}|_{h_{j}} \leqslant4\sqrt3\,l_{j}^{56.25\tau} \leqslant\varepsilon_{l_{j+1}}^{-2}.
\end{aligned}
\end{equation*}
And
\begin{equation*}
|b_{j+1}e^{-{2\pi}i(\rho_{j}-\frac{\langle n_{j}^{*}, \alpha \rangle}{2})}-b_{j}^{*}|\leqslant \|\tilde{C}_{j}\|^{2}\leqslant |g_{j}^{*}|^{2}_{\tilde{h}_{j}}\leqslant 4\|P_{j}\|^{4}|\tilde{F}_{j}|^{2}_{\tilde{h}_{j}}.
\end{equation*}
The resonant projection preserves the upper-right Fourier coefficient at $n_{j}^{*}$. Hence \eqref{two} and $g_{j}=P_{j}\tilde{F}_{j}P_{j}^{-1}$ give
\begin{equation*}
\begin{aligned}
    |b_{j}^{*}-(\widehat{g}_{j}(n_{j}^{*}))_{12}|
    &\leqslant |g_{j}^{*}-\mathbb{P}_{re}g_{j}|_{\tilde{h}_{j}}e^{-2\pi|n_{j}^{*}|\tilde{h}_{j}}\\
    &\leqslant2\varepsilon_{l_{j}}^{-\frac{61}{70}}|\tilde{F}_{j}|_{\tilde{h}_{j}}^{2}\|P_{j}\|^{4}e^{-2\pi|n_{j}^{*}|\tilde{h}_{j}}.
\end{aligned}
\end{equation*}
and
\begin{equation*}
\begin{aligned}
| g_{j}(\theta)- P_{j}Ad(B^{(j)}) V_{j}(\theta) WP_{j}^{-1}|_{\tilde{h}_{j}}\leqslant 2|f_{j}|_{\tilde{h}_{j}}\|P_{j}\|^{2}.
\end{aligned}
\end{equation*}
Applying the triangle inequality to the above expression, we get
\begin{equation*}
\begin{cases}
|b_{j+1}e^{-{2\pi}i(\rho_{j}-\frac{\langle n_{j}^{*}, \alpha \rangle}{2})}-\widehat{T}_{j}(n^{*}_{j})| \leqslant 324l_{j}^{263\tau-{(2+\delta)k}}, \quad \mathcal{K} \cap \mathcal{A}_{j} \neq \emptyset,\\
|b_{j+1}| \leqslant 324l_{j}^{263\tau-{(2+\delta)k}}, \quad \mathcal{K} \cap \mathcal{A}_{j}=\emptyset .
\end{cases}
\end{equation*}
In the non-resonant case the stated bound for $\|A_{j+1}\|$ follows by multiplying the previous bound by $1+12l_{j}^{118\tau-{(1+\frac{\delta}{2})k}}$. In the resonant case $\|A_{j+1}\|\leqslant2$, so the same bound holds. Increasing ${M_{0}}$ makes the full product less than $4/3$. This also verifies the smallness condition involving $\|A_{j}\|$ in \hyperref[remove]{Lemma~\ref*{remove}} at every step. Conjugating the remaining real-torus tail gives \eqref{r14}, with
\begin{equation*}
e^{F_{j+1}}=e^{f_{j+1}}\prod_{p\geqslant j+1}e^{V_{p}\operatorname{Ad}(B^{(j+1)})W}.
\end{equation*}
\end{proof}

\section{\texorpdfstring{Proof of \hyperref[GYGJMGYQ]{Theorem~\ref*{GYGJMGYQ}}(2)}{Proof of Theorem 1.1(2)}}

In this section, we prove part (2) of \hyperref[GYGJMGYQ]{Theorem~\ref*{GYGJMGYQ}} for the potential defined in \eqref{pot}. The main step is the following quantitative reducibility result at the gap endpoints.

\begin{lemma}\label{r26}
Let $n_{J}\in\mathcal{K}$, and set $m_{J}\coloneqq-n_{J}$, $n\coloneqq|n_{J}|$ and $E\coloneqq E_{m_{J}}^{+}$. For every integer $0\leqslant q<(1+\frac{\delta}{2})k-127\tau$, there exist $B\in C^{q}(2\mathbb{T}^{d},\mathrm{SL}(2,\mathbb{R}))$ and $\zeta>0$ such that
\begin{equation}\label{r27}
    B(\theta+\alpha)^{-1}S_{E}^{V}(\theta)B(\theta)=\begin{pmatrix}1&\zeta\\0&1\end{pmatrix}.
\end{equation}
The following estimates hold:
\begin{equation}\label{r28}
\begin{aligned}
    \|B\|_{0}&\leqslant n^{\frac{6}{5}\tau}, \qquad \|B\|_{q}\leqslant n^{q+\frac{6}{5}\tau},\\
    n^{-(1+\frac{\delta}{2})k-\frac{7}{2}\tau}&\leqslant\zeta\leqslant n^{122\tau-(1+\frac{\delta}{2})k}.
\end{aligned}
\end{equation}
\end{lemma}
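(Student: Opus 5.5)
We will run the iteration of Theorem~\ref{GJKDXL} at the fixed energy $E=E_{m_{J}}^{+}$, starting from $A_{1}$ (in $\mathrm{SU}(1,1)$ coordinates), $f_{1}=0$, $B^{(1)}=I$, and follow the steps $j=1,2,\dots$. At this energy the cocycle is not uniformly hyperbolic and $2\rho(\alpha,S_{E}^{V})\equiv\langle m_{J},\alpha\rangle$. The transformation law for rotation numbers shows that the accumulated degree of $B^{(j)}$ must eventually equal $\pm n_{J}$, as in the proof of Theorem~\ref{r8}. The first goal is to locate the resonances. We claim that the only resonant step is $j=J$, with $n_{J}^{*}=\pm n_{J}$ (sign fixed by our convention $\rho_{j}=-\rho$), and that all steps $j<J$ are nonresonant. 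The argument is by contradiction. An earlier resonance at some $j<J$ would produce a resonant vector $n^{*}_{j}$ with $|n^{*}_{j}|\leqslant N_{l_{j}}=l_{j+1}$. After that step the rotation number is within $2\varepsilon_{l_{j}}^{1/7}$ of $0$ by \eqref{nr}. The later degree would then have to make up the difference $n_{J}-n^{*}_{j}$, which forces a second resonance at a step $j'$ with $N_{l_{j'}}\gtrsim|n_{J}|$. Using the Diophantine condition and $1.9l_{J}<|n_{J}|<2l_{J}$, this would force a resonant vector outside the window $N_{l_{J-1}}=l_{J}$ unless $j'=J$. A further count then shows the remaining degree cannot be absorbed, so no resonance occurs before step $J$. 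For $j>J$ every step is nonresonant, since the degree has already reached $\pm n_{J}$ (again copying Theorem~\ref{r8}). Hence $B^{(j)}$ converges, and $\|B\|_{0}$ is controlled by $\|P_{J}\|\cdot|Q_{n_{J}}|\lesssim\varepsilon_{l_{J}}^{-1/14}$ on the real torus. This gives roughly $n^{\frac{3}{2}\tau}$, which must be improved. We will instead use \eqref{pd} directly: $\|P_{J}\|^{2}\leqslant C\gamma^{-1}|n_{J}|^{\tau}$, so $\|B\|_{0}\leqslant Cn^{\tau/2}\leqslant n^{\frac{6}{5}\tau}$. The derivatives of order $q$ cost $n^{q}$ from $Q_{n_{J}}$. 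The nonresonant factors $e^{Y_{j}}$ and $B_{j}$ are $C^{q}$-close to $I$ by Cauchy estimates, as long as $q<(1+\frac{\delta}{2})k-127\tau$, by \eqref{n-r-c}.

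Next we identify the limit constant. By the rotation-number argument, the limiting constant $\overline{A}=\lim A_{j}$ has rotation number $0$ and is non-hyperbolic, so it is parabolic. Since $\overline{A}$ is near $I$, we can write $\overline{A}=e^{h}$ with $h^{2}=0$. Exactly as in the proof of Theorem~\ref{r8}, a constant rotation conjugates $\overline{A}$ to $\begin{pmatrix}1&\zeta\\0&1\end{pmatrix}$, which gives \eqref{r27}. Remark~\ref{r37} shows that $\zeta>0$ at $E_{m}^{+}$. Moreover $|\zeta|$ is comparable to $|b_{J+1}|$ up to the later nonresonant increments. By \eqref{10}, each of those increments is at most $30\|A_{j}\|l_{j}^{299\tau-(2+\delta)k}$ for $j>J$, since $V_{j}=0$ on the even shell $J+1$. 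On the next odd shell, the term $\langle\mathrm{Ad}(B^{(j)})V_{j}W\rangle$ is a single Fourier mode $|n_{j}|^{-(1+\delta/2)k}$ that is far from resonance and hence averages to size $e^{-2\pi|n_{j}|\tilde h}$-small. Summing, the total error after step $J$ is $\ll l_{J}^{263\tau-(2+\delta)k}$.

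The main obstacle is the lower bound $\zeta\geqslant n^{-(1+\frac{\delta}{2})k-\frac{7}{2}\tau}$. By \eqref{12}, $|b_{J+1}|\geqslant|\widehat T_{J}(n_{J}^{*})|-324l_{J}^{263\tau-(2+\delta)k}$. We must bound $|\widehat T_{J}(n_{J}^{*})|$ from below. The quantity $T_{J}$ is the off-diagonal part of $P_{J}\mathrm{Ad}(B^{(J)})(V_{J}W)P_{J}^{-1}$. The plan here has three parts. First, apply Lemma~\ref{lemma2.2} with $B=B^{(J)}$, which is $|B^{(J)}-I|$-small because all earlier steps are nonresonant by \eqref{n-r-c}. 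This keeps the $n_{J}$-coefficient of $\mathrm{Ad}(B^{(J)})V_{J}W$ close to that of $V_{J}W$, namely $\frac{1}{4}|n_{J}|^{-(1+\frac{\delta}{2})k}$ in the $w$-entry. Second, apply the second estimate of Lemma~\ref{lemma2.1} with $P=P_{J}$. The $w$-coefficient at $n_{J}$ dominates $3\max\{|\widehat w(-n_{J})|,|\widehat u(n_{J})|\}$ because of the structure of $W$: the entries of $W$ are equal in size. This will need care, so we may instead exploit that $\|P_{J}\|\geqslant1$ and use the first estimate via the commutator. We therefore expect to use the factor $|a|^{2}\geqslant\frac{1}{4}$, or $\|P\|^{-2}\geqslant C^{-1}n^{-\tau}$ from \eqref{pd}, which together give $|\widehat T_{J}|\geqslant n^{-(1+\frac{\delta}{2})k-\tau-o(1)}$. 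Third, divide by the conjugating factor of the final rotation. The losses from the $\frac{6}{5}\tau$ growth leave $n^{-(1+\frac{\delta}{2})k-\frac{7}{2}\tau}$. The inequality $(2+\delta)k-263\tau>(1+\frac{\delta}{2})k+\frac{7}{2}\tau$ holds since $k>361\tau$, so the error terms are dominated. The upper bound $\zeta\leqslant n^{122\tau-(1+\frac{\delta}{2})k}$ follows from $|b^{*}_{J}|\leqslant18l_{J}^{121\tau-(1+\frac{\delta}{2})k}$ together with $l_{J}\sim n$. Finally, undo the $M$-conjugation to return to $\mathrm{SL}(2,\mathbb{R})$; this changes norms only by constants.
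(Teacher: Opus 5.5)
Your proposal breaks at its first structural claim, that every step $j<J$ is nonresonant. That claim is false in general, and the ``further count'' you invoke does not exist. By \eqref{rho-rho}, $2\rho_j$ lies within $2\varepsilon_{l_j}^{1/6}$ of $\langle n_J-\tilde n^{(j)},\alpha\rangle$ modulo $\mathbb{Z}$. A resonance at step $j$ with some $0<|n|\leqslant l_{j+1}$ therefore only needs $\|\langle n_J-\tilde n^{(j)}-n,\alpha\rangle\|_{\mathbb{R}/\mathbb{Z}}<2l_j^{-3\tau}$. The vector involved is nonzero with length at most $4l_J$, so the Diophantine condition gives only $\gamma(4l_J)^{-\tau}$. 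This beats $2l_j^{-3\tau}$ only when $l_j^3\gtrsim l_J$, i.e.\ $j\geqslant J-2$. For $j\leqslant J-3$ (where $l_j^3\leqslant l_J^{8/9}$), nothing in the hypotheses excludes such accidental resonances. Nothing breaks after one occurs either: step $J$ is still resonant with $n_J^*=n_J-\tilde n^{(J)}$, $|n_J^*|\leqslant N_{l_J}$, and it absorbs the remaining degree (Lemma~\ref{6.5}). This is why the paper works with a first resonant step $j_0\leqslant J$.

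Once $j_0<J$ is allowed, $B^{(J)}$ contains $P_{j_0}$ (with $\|P_{j_0}\|^2$ up to $3\varepsilon_{l_{j_0}}^{-1/7}$) and the rotations $Q_{n_r^*}$. It is then far from $I$, so your use of Lemma~\ref{lemma2.2} with $B=B^{(J)}$ is unavailable. The norm bounds change as well. The extra factors $\prod_{r<J}\|P_r\|$ are controlled via $\sum_{r<J}\ln|n_r^*|\leqslant\frac65\ln l_J$ and cost $l_J^{3\tau/5}$. That cost, not slack, is why the lemma has $\frac65\tau$ rather than your $\frac{\tau}{2}$. For $\|B\|_q$, multiplying the $C^q$ norms of the factors $Q_{n_r^*}$ would cost $\prod_r|n_r^*|^q$. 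The paper instead bounds $|B^{(J+1)}|_{s_J}$ on the strip $s_J=(2\pi n)^{-1}$, where $\sum_r|n_r^*|\leqslant Cn$ keeps all rotations bounded, and pays $s_J^{-q}\sim n^q$ once through a Cauchy estimate.

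Your lower bound for $|\widehat T_J(n_J^*)|$ is also not set up correctly, even when $j_0=J$. The second estimate of Lemma~\ref{lemma2.1} at frequency $n_J$ for $V_JW$ gives nothing: $\widehat w(n_J)$, $\widehat w(-n_J)$ and $\widehat u(n_J)$ all have modulus $\frac14|n_J|^{-(1+\frac{\delta}{2})k}$. The identity you need is \eqref{r25}. Write $\widetilde W_J=\operatorname{Ad}(P_J)W_J$ with its phase $e^{-2\pi i\langle\tilde n^{(J)},\theta\rangle}$ factored out; the two exponentials in $V_J$ then give $\widehat T_J(n_J^*)=\frac{\langle\tilde w_J\rangle+\widehat{\tilde w}_J(2n_J)}{2|n_J|^{(1+\frac{\delta}{2})k}}$. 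So you must bound the \emph{average} of the conjugated constant direction $W$ from below, and its $2n_J$ coefficient from above.

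If $J=j_0$, this follows from the commutator estimate of Lemma~\ref{lemma2.1}. You first need $\|[A_J,\langle W_J\rangle]\|\geqslant\frac12$, obtained by tracking $A_j$ and $W_j$ through the nonresonant steps. The $2n_J$ coefficient is then killed by the strip width.

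If $J>j_0$, you must propagate $\mathcal{Z}_j\geqslant10\mathcal{X}_j+2^{-j}\mathcal{Z}_{j_0+1}$ through the intermediate steps, as in Lemma~\ref{lemma6.4}. Only then does the second estimate of Lemma~\ref{lemma2.1} apply. It works because the nonzero phase $\tilde n^{(J)}$ moves the competing coefficients (those of $u_J$ at $-\tilde n^{(J)}$ and of $w_J$ at $2\tilde n^{(J)}$) to frequencies of length $\geqslant|\tilde n^{(J)}|$, where $\mathcal{X}_J$ controls them.

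Finally, the last conjugation lies in $\mathrm{SO}(2,\mathbb{R})$ and leaves $|\zeta|=2|b_\infty|$ unchanged, so no loss arises from ``dividing by the final rotation''. In the paper the $\frac72\tau$ loss comes from $\mathcal{Z}_{j_0+1}\gtrsim\varepsilon_{l_{j_0}}^{1/7}$.
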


We postpone the proof of \hyperref[r26]{Lemma~\ref*{r26}} to the end of this section and first deduce part (2) of \hyperref[GYGJMGYQ]{Theorem~\ref*{GYGJMGYQ}} from the lower bound in \hyperref[r30]{Lemma~\ref*{r30}}.

\begin{proof}[{Proof of part (2) of {\hyperref[GYGJMGYQ]{Theorem~\ref*{GYGJMGYQ}}}}]
For $n_{J}\in\mathcal{K}$, set $m_{J}\coloneqq-n_{J}$ and $n\coloneqq|n_{J}|$. Apply \hyperref[r26]{Lemma~\ref*{r26}} with $q=q_{0}$. Since $3\tau+d<q_{0}<5\tau<(1+\frac{\delta}{2})k-127\tau$, by \eqref{is} and \eqref{r28},
\begin{equation}\label{gs}
    K^{2}\zeta\|B\|_{q_{0}}^{4}\leqslant K^{2}n^{147\tau-(1+\frac{\delta}{2})k}\leqslant c_{0},\qquad 0<\zeta\leqslant\frac{1}{4}.
\end{equation}
The label $m_{J}\neq0$ gives $0<N_{V,\alpha}(E_{m_{J}}^{+})<1$. Thus \eqref{r27} and \eqref{gs} verify the hypotheses of \hyperref[r30]{Lemma~\ref*{r30}}, and \eqref{r34} and \eqref{r28} yield
\begin{equation*}
\begin{aligned}
    |G_{m_{J}}(V)|&\geqslant c_{1}\zeta\|B\|_{0}^{-2}\geqslant c_{1}n^{-(1+\frac{\delta}{2})k-\frac{59}{10}\tau}\\
    &=\bigg(c_{1}n^{\frac{\delta k}{2}+\frac{\tau}{10}}\bigg)n^{-(1+\delta)k-6\tau}\geqslant n^{-(1+\delta)k-6\tau}.
\end{aligned}
\end{equation*}
The last inequality follows from \eqref{is}. For $M_{0}$ large enough,  $\|V\|_{k}\leqslant\varepsilon_{0}$. Since $\mathcal{K}$ is infinite and $|n_{J}|\to\infty$, the labels $m_{J}=-n_{J}$ give the required sequence.
\end{proof}

Before giving the proof of \hyperref[r26]{Lemma~\ref*{r26}}, we first introduce some useful notations and related lemmata.

 Set
\begin{equation*}
\begin{aligned}
W_{j}&=\operatorname{Ad}(B^{(j)})W=(\begin{array}{cc} i u_{j} & w_{j} e^{-{2\pi}i\langle \tilde{n}^{(j)}, \theta\rangle} \\
\overline{w_{j}} e^{{2\pi}i\langle\tilde{n}^{(j)}, \theta\rangle} & -i u_{j}
\end{array}), \\
\mathcal{Z}_{j} &\coloneqq|\langle w_{j}\rangle|, \quad \mathcal{M}_{j} \coloneqq|w_{j}|_{h_{j}}+|u_{j}|_{h_{j}}, \\
\mathcal{X}_{j} &\coloneqq \sup _{n \in \mathbb{Z}^{d}, |n| \geqslant|\widetilde{n}^{(j)}|} {\max\{|\widehat{w}_{j}(n)|,|\widehat{u}_{j}(n)|\}}, \\
\tilde{n}^{(1)}&=0, \quad \tilde{n}^{(j+1)}=\begin{cases} \tilde{n}^{(j)}, \quad  \text{$j$-th step is non-resonant}, \\
\tilde{n}^{(j)}+{n}^{*}_{j}, \quad  \text{$j$-th step is resonant}.
\end{cases}
\end{aligned}
\end{equation*}
Fix $n_{J}\in\mathcal{K}$ and a spectral energy $E$ with
\begin{equation*}
2\rho(\alpha,S_{E}^{V})\equiv-\langle n_{J},\alpha\rangle\pmod{{\mathbb{Z}}}.
\end{equation*}
The corresponding gap label is $m_{J}=-n_{J}$. Since $\|V\|_{0}$ is small and $E\in\Sigma_{V,\alpha}$, the initial matrix satisfies $\|A_{1}\|<3$. We start with $f_{1}=0$ and $B^{(1)}=I$. By increasing ${M_{0}}$, all initial smallness conditions in \hyperref[GJKDXL]{Theorem~\ref*{GJKDXL}} hold.
Inductively, there exists $B^{(j)}\in C_{h_{j}}^{\omega}(2\mathbb{T}^{d},\mathrm{SU}(1,1))$ conjugating $(\alpha,MS_{E}^{V}M^{-1})$ to
$(\alpha,A_{j}e^{f_{j}}\prod_{p\geqslant j}e^{V_{p}\operatorname{Ad}(B^{(j)})W})$.
{Write $\tilde{\rho}_{j}$ for the negative of the fibered rotation number of this cocycle in real coordinates.} Then
\begin{equation*}
 \tilde{\rho}_{j}\equiv\frac{1}{2}\langle n_{J}-\widetilde{n}^{(j)},\alpha\rangle\pmod{{\frac{1}{2}\mathbb{Z}}}.
\end{equation*}
By \eqref{rho_holder}, using $\|A_{j}\|\leqslant4$ from \hyperref[GJKDXL]{Theorem~\ref*{GJKDXL}}, we obtain
 
\begin{equation}
\label{rho-rho}
\begin{aligned}
|\rho_{j}-\tilde{\rho}_{j}|&\leqslant {C}\|A_{j}-A_{j}e^{f_{j}}\prod_{p\geqslant j}e^{V_{p}W_{j}}\|_{0}^{\frac{1}{2}}\\
&\leqslant {C}\|A_{j}\|^{1/2} {\biggl(\varepsilon_{l_{j}}^{{(2+\delta)k}}+|W_{j}|_{h_{j}} \sum_{n_{p}\in\mathcal{K},\ p\geqslant j}|n_{p}|^{-{(1+\frac{\delta}{2})k}}\biggr)^{1/2}}\\
&\leqslant l_{j}^{43\tau-{\frac{(2+\delta)k}{4}}}< \varepsilon_{l_{j}}^{\frac{1}{6}}.
\end{aligned}
\end{equation}
 Thus, we arrive that
\begin{equation}
\label{rhoooo}
\begin{aligned}
{\operatorname{dist}}(2\rho_{j}-\langle n_{J}-\tilde{n}^{(j)}, \alpha \rangle,{\mathbb{Z}})<\varepsilon_{l_{j}}^{\frac{1}{7}}.
\end{aligned}
\end{equation}

\begin{lemma}\label{n-range}
For all $j\geqslant 1$, we have 
\begin{equation}
\label{23} |\tilde{n}^{(j)}|\leqslant \frac{3}{2}l_{j}, \quad e^{{2\pi}|\tilde{n}^{(j)}|h_{j}}\leqslant l_{j}^{25.5\tau}.
\end{equation}
If $j>1$ and $j$-th step is resonant, we have
\begin{equation}
|\tilde{n}^{(j)}|\leqslant \frac{3}{2}l_{j-1},\quad e^{{2\pi}|\tilde{n}^{(j)}|h_{j}}\leqslant {l_{j}}^{\frac{\tau}{10}},\label{j}
\end{equation}
\begin{equation}
|n_{j}^{*}|>l_{j},\quad e^{-{2\pi}|n_{j}^{*}|h_{j}}\leqslant l_{j}^{-17\tau},\label{r20}
\end{equation}
\begin{equation}
|\tilde{n}^{(j+1)}|\geqslant \frac{99}{100}l_{j},\quad e^{-{2\pi}|\tilde{n}^{(j+1)}|h_{j}}\leqslant l_{j}^{-16.83\tau}.\label{r21}
\end{equation}
\end{lemma}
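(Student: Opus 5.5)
The plan is to prove all four groups of estimates simultaneously by induction on $j$. The two ingredients are the resonance tracking \eqref{rhoooo} and the uniqueness of resonances from the Diophantine condition. The basic mechanism is that step $j$ is resonant exactly when the residual label $n_{J}-\tilde{n}^{(j)}$ has size at most $N_{l_{j}}=l_{j+1}$. Indeed, \eqref{rhoooo} says $2\rho_{j}$ is $\varepsilon_{l_{j}}^{1/7}$-close to $\langle n_{J}-\tilde{n}^{(j)},\alpha\rangle$. If $0<|n_{J}-\tilde{n}^{(j)}|\leqslant N_{l_{j}}$, the step is resonant, and the uniqueness argument in Theorem~\ref{GJKDXL} forces $n_{j}^{*}=n_{J}-\tilde{n}^{(j)}$. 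Then $\tilde{n}^{(j+1)}=n_{J}$, and afterwards everything is nonresonant. If instead $|n_{J}-\tilde{n}^{(j)}|>N_{l_{j}}$, any resonant $n_{j}^{*}$ would satisfy $|n_{j}^{*}-(n_{J}-\tilde{n}^{(j)})|\gtrsim\varepsilon_{l_{j}}^{-1/(7\tau)}$, which is far larger than $N_{l_{j}}$. Since $\varepsilon_{l_{j}}^{-1/(7\tau)}=l_{j}^{3}\gg l_{j+1}=l_{j}^{3/2}$, this contradicts $|n_{j}^{*}|\leqslant N_{l_{j}}$, so the step is nonresonant.

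Consequently, there is at most one resonant step. Since $|n_{J}|<2l_{J}\leqslant N_{l_{J-1}}=l_{J}$ fails only when compared with the wrong index, I will locate it carefully. The resonance occurs at the first $j$ with $|n_{J}|\leqslant l_{j+1}$, where $\tilde{n}^{(j)}=0$. Using $1.9l_{J}<|n_{J}|<2l_{J}$ and $l_{J}=l_{J-1}^{3/2}$, this first index is $j=J-1$, since $1.9l_{J}>l_{J}$ excludes $j=J-1$ only if $l_{J}<|n_{J}|$. I will redo this comparison precisely: $N_{l_{J-1}}=l_{J}<|n_{J}|\leqslant 2l_{J}<l_{J+1}$, so the resonance is at $j=J$.

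Then $\tilde{n}^{(j)}=0$ for $j\leqslant J$ and $\tilde{n}^{(j)}=n_{J}$ for $j>J$. The four claims follow directly. For \eqref{23}: if $j\leqslant J$, then $|\tilde{n}^{(j)}|=0$; if $j>J$, then $|n_{J}|<2l_{J}\leqslant\frac{3}{2}l_{j}$ since $l_{j}\geqslant l_{J}^{3/2}$. The exponential bound comes from $2\pi|\tilde{n}^{(j)}|h_{j}\leqslant\frac32\cdot17\tau\ln l_{j}=25.5\tau\ln l_{j}$. For \eqref{j}, the resonant $j=J$ has $\tilde{n}^{(J)}=0$, which gives the bound trivially, with $e^{0}=1\leqslant l_{j}^{\tau/10}$. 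For \eqref{r20}, $|n_{J}^{*}|=|n_{J}|>1.9l_{J}>l_{J}$, so $e^{-2\pi|n^{*}|h_{J}}\leqslant e^{-17\tau\ln l_{J}}$. For \eqref{r21}, $|\tilde{n}^{(J+1)}|=|n_{J}|\geqslant1.9l_{J}\geqslant\frac{99}{100}l_{J}$, and $2\pi\cdot0.99l_{J}h_{J}=16.83\tau\ln l_{J}$.

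The main obstacle is the circularity: \eqref{rhoooo} itself relies on the bound $|W_{j}|_{h_{j}}\leqslant l_{j}^{O(\tau)}$, which in turn uses $e^{2\pi|\tilde{n}^{(j)}|h_{j}}$ from \eqref{23}. I will therefore run the induction jointly. The hypothesis on $\tilde{n}^{(j)}$ gives the bound on $|B^{(j)}|_{h_{j}}$, through the product structure $Q_{n^{*}}e^{Y}P$ and Theorem~\ref{GJKDXL}. That bound yields \eqref{rho-rho} and \eqref{rhoooo} at step $j$, and these determine whether step $j$ resonates and update $\tilde{n}^{(j+1)}$. Some care is needed for the case $j=1$ and for the representatives modulo $\frac12\mathbb{Z}$. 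The half-integer ambiguity is harmless, since degrees are integers and it only shifts $2\rho_{j}$ by integers.
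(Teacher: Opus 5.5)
Your key claim, that step $j$ is non-resonant whenever $|n_{J}-\tilde{n}^{(j)}|>N_{l_{j}}$, does not follow from the Diophantine argument, and the rest of your proof depends on it. Suppose step $j$ were resonant at $n_{j}^{*}$. Then \eqref{rhoooo} and $\alpha\in\mathrm{DC}_{d}(\gamma,\tau)$ give only $|n_{j}^{*}-(n_{J}-\tilde{n}^{(j)})|>(\gamma/2)^{1/\tau}\varepsilon_{l_{j}}^{-1/(7\tau)}=(\gamma/2)^{1/\tau}l_{j}^{3}$. Combined with $|n_{j}^{*}|\leqslant l_{j+1}$, this contradicts nothing unless $|n_{J}-\tilde{n}^{(j)}|$ is itself much smaller than $l_{j}^{3}$. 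Since $|n_{J}|>1.9l_{J}=1.9l_{j}^{(3/2)^{J-j}}$ and $(3/2)^{3}>3$, the smallness holds for $j\in\{J-2,J-1\}$ but fails for every $j\leqslant J-3$. For those $j$, the Diophantine condition keeps $\langle n_{J}-\tilde{n}^{(j)}-n,\alpha\rangle$ away from $\mathbb{Z}$ only at scale about $\gamma l_{J}^{-\tau}\ll l_{j}^{-3\tau}=\varepsilon_{l_{j}}^{1/7}$. So a resonance at some $0<|n|\leqslant l_{j+1}$ unrelated to $n_{J}$ is entirely possible. Hence ``at most one resonant step'' and $\tilde{n}^{(j)}=0$ for $j\leqslant J$ are unproved. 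This is why the paper works with a first resonant step $j_{0}\leqslant J$ and with $n_{J}^{*}=n_{J}-\tilde{n}^{(J)}$ in Lemma~\ref{6.5}. Your verification of \eqref{j}, \eqref{r20} and \eqref{r21} therefore covers only step $J$, and only under an extra assumption. The lemma, however, is stated and later used for every resonant step.

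What is missing is a local argument that never refers to $n_{J}$.
\begin{itemize}
\item \eqref{23}: induction from $|n_{j}^{*}|\leqslant N_{l_{j}}=l_{j+1}$ and $\frac{3}{2}l_{j}+l_{j+1}\leqslant\frac{3}{2}l_{j+1}$.
\item \eqref{j}: if $j>1$ is resonant, then step $j-1$ is non-resonant, because a resonant step is always followed by a non-resonant one (Theorem~\ref{GJKDXL}). Hence $\tilde{n}^{(j)}=\tilde{n}^{(j-1)}$, and \eqref{23} at $j-1$ together with $l_{j-1}/l_{j}=l_{j-1}^{-1/2}$ gives the bound.
\item \eqref{r20}: argue by contradiction and suppose $|n_{j}^{*}|\leqslant l_{j}=N_{l_{j-1}}$. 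The non-resonant step $j-1$ gives $\|A_{j}-A_{j-1}\|\lesssim l_{j-1}^{118\tau-(1+\frac{\delta}{2})k}$, so $|\rho_{j}-\rho_{j-1}|$ is tiny. Then $\operatorname{dist}(2\rho_{j-1}-\langle n_{j}^{*},\alpha\rangle,\mathbb{Z})<\varepsilon_{l_{j}}^{1/7}+2|\rho_{j}-\rho_{j-1}|<\varepsilon_{l_{j-1}}^{1/7}$, so step $j-1$ would be resonant at $n_{j}^{*}$, which is impossible.
\item \eqref{r21}: follows from $|\tilde{n}^{(j)}|\leqslant\frac{3}{2}l_{j-1}\leqslant\frac{1}{100}|n_{j}^{*}|$ and the triangle inequality.
\end{itemize}
The circularity you worry about does not arise. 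The bound \eqref{rho-rho} uses only $|B^{(j)}|_{h_{j}}\leqslant\varepsilon_{l_{j}}^{-2}$ from Theorem~\ref{GJKDXL}, not \eqref{23}.
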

\begin{proof}
  By definition, \eqref{23} holds for $j=1$. Assume that \eqref{23} is true for $j$. Then if $j$-th step is non-resonant, \eqref{23} is still holds since $\tilde{n}^{(j+1)}=\tilde{n}^{(j)}$. And if $j$-th step is resonant, we have 
    \begin{equation*}
|\tilde{n}^{(j+1)}|\leqslant |\tilde{n}^{(j)}|+|n_{j}^{*}|\leqslant \frac{3}{2}l_{j}+l_{j+1}\leqslant \frac{3}{2}l_{j+1}.
\end{equation*}
If step $j>1$ is resonant, step $j-1$ is non-resonant by \hyperref[GJKDXL]{Theorem~\ref*{GJKDXL}}. Thus the definition of $\tilde{n}^{(j+1)}$ and \eqref{23} at step $j-1$ give \eqref{j}.
    
    Next, we assume that $j$-th step is resonant. If $|n_{j}^{*}|\leqslant l_{j}=N_{l_{j-1}}$, then due to the properties of rotation numbers, we obtain that 
    \begin{equation*}
|\rho_{j}-\rho_{j-1}|\leqslant C\|A_{j}-A_{j-1}\|^{\frac{1}{2}}\leqslant {l_{j-1}}^{60\tau-{\frac{(2+\delta)k}{4}}}.
\end{equation*}
    From the inequality above together with the resonant condition, it follows that
    \begin{equation*}
\begin{aligned}
{\operatorname{dist}}(2\rho_{j-1}-\langle n_{j}^{*}, \alpha \rangle,{\mathbb{Z}})<\varepsilon_{l_{j-1}}^{\frac{1}{7}},
\end{aligned}
\end{equation*}
    which contradicts with $(j-1)$-th step is non-resonant. By simple calculation, \eqref{r20} can be readily deduced.

    The estimate \eqref{j} and \eqref{r20} imply that $|\tilde{n}^{(j)}|\leqslant \frac{|n_{j}^{*}|}{100}$. Combined with the triangle inequality, this yields \eqref{r21}.
\end{proof}

   \begin{lemma}\label{6.5}
We have $J$-th step is resonant and $n^{*}_{J}=n_{J}-\tilde{n}^{(J)}$. Moreover for $j\geqslant J+1$, $j$-th step is non-resonant and $\tilde{\rho_{j}}=0$.
\end{lemma}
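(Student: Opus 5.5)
The plan is to track the quantity $n_{J}-\tilde{n}^{(j)}$ through the iteration, using \eqref{rhoooo} together with the uniqueness of resonances from the proof of \hyperref[GJKDXL]{Theorem~\ref*{GJKDXL}} and the shell structure from \hyperref[n-range]{Lemma~\ref*{n-range}}.

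\textbf{Steps before $J$.} I would argue by induction that no resonant step occurs with $j<J$ in a way that makes $\tilde{n}^{(j)}$ reach $n_{J}$ early, and that $|n_{J}-\tilde{n}^{(J)}|\leqslant N_{l_{J}}=l_{J+1}$. By \eqref{23}, $|\tilde{n}^{(J)}|\leqslant\frac{3}{2}l_{J}$, and we also have $|n_{J}|<2l_{J}$. Hence $0<|n_{J}-\tilde{n}^{(J)}|\leqslant 4l_{J}<l_{J+1}$. To see that this difference is nonzero, suppose $\tilde{n}^{(J)}=n_{J}$. Then $|\tilde{n}^{(J)}|>1.9l_{J}>\frac{3}{2}l_{J}$, which contradicts \eqref{23}.

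\textbf{Resonance at step $J$.} By \eqref{rhoooo} at $j=J$, the vector $n'\coloneqq n_{J}-\tilde{n}^{(J)}$ satisfies $0<|n'|\leqslant N_{l_{J}}$ and $\operatorname{dist}(2\rho_{J}-\langle n',\alpha\rangle,\mathbb{Z})<\varepsilon_{l_{J}}^{1/7}$. So step $J$ is resonant, and by the uniqueness of the resonant vector shown in the resonant case of \hyperref[GJKDXL]{Theorem~\ref*{GJKDXL}}, we get $n^{*}_{J}=n'$. It follows that $\tilde{n}^{(J+1)}=n_{J}$.

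\textbf{Steps after $J$.} Suppose that for some $j\geqslant J+1$ we have $\tilde{n}^{(j)}=n_{J}$. Then \eqref{rhoooo} gives $\operatorname{dist}(2\rho_{j},\mathbb{Z})<\varepsilon_{l_{j}}^{1/7}$. For any $0<|n|\leqslant N_{l_{j}}$, the Diophantine condition then gives
\begin{equation*}
\operatorname{dist}(2\rho_{j}-\langle n,\alpha\rangle,\mathbb{Z})\geqslant\gamma N_{l_{j}}^{-\tau}-\varepsilon_{l_{j}}^{1/7}>\varepsilon_{l_{j}}^{1/7}.
\end{equation*}
This holds because $\varepsilon_{l_{j}}^{1/7}=l_{j}^{-3\tau}\ll\gamma l_{j+1}^{-\tau}=\gamma l_{j}^{-3\tau/2}$. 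So step $j$ is non-resonant, $\tilde{n}^{(j+1)}=n_{J}$, and by induction every later step is non-resonant. Step $J+1$ is already non-resonant by \hyperref[GJKDXL]{Theorem~\ref*{GJKDXL}}. Finally, $\tilde{\rho}_{j}\equiv\frac{1}{2}\langle n_{J}-\tilde{n}^{(j)},\alpha\rangle=0$ modulo $\frac{1}{2}\mathbb{Z}$, and with the local choice of representatives this means $\tilde{\rho}_{j}=0$.

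\textbf{Main obstacle.} I expect the hardest step to be the claim that $\tilde{n}^{(J)}\neq n_{J}$, that is, ruling out an earlier resonant step having already absorbed $n_{J}$. The bound $|\tilde{n}^{(J)}|\leqslant\frac{3}{2}l_{J}<1.9l_{J}<|n_{J}|$ from \eqref{23} is what settles it.

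A second delicate point is the bookkeeping of rotation-number representatives modulo $\frac{1}{2}\mathbb{Z}$ versus $\mathbb{Z}$ in \eqref{rhoooo}. For this I would rely on the compatible local representatives fixed in the proof of \hyperref[GJKDXL]{Theorem~\ref*{GJKDXL}}.
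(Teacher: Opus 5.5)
Your proposal is correct and follows essentially the same route as the paper: \eqref{23} gives $0<|n_J-\tilde n^{(J)}|\leqslant N_{l_J}$, then \eqref{rhoooo} together with uniqueness of the resonant vector forces $n_J^*=n_J-\tilde n^{(J)}$, and an induction using the Diophantine condition shows that every later step is non-resonant with $\tilde n^{(j)}=n_J$. The only minor differences are that you treat $J=1$ together with $J>1$ rather than separately, and that you use the weaker bound $\operatorname{dist}(2\rho_j,\mathbb{Z})<\varepsilon_{l_j}^{1/7}=l_j^{-3\tau}$ from \eqref{rhoooo} instead of the sharper \eqref{rho-rho}; this still suffices, since $\gamma l_j^{-3\tau/2}>2l_j^{-3\tau}$ once $M_0$ is large.
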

\begin{proof}
    In the case $J=1$, by \eqref{rhoooo}, we arrive that
    \begin{equation*}
\begin{aligned}
{\operatorname{dist}}(2\rho_{1}-\langle n_{1}, \alpha \rangle,{\mathbb{Z}})<\varepsilon_{l_{1}}^{\frac{1}{7}},
\end{aligned}
\end{equation*}
    which means the first step is resonant since $0<|n_{1}|\leqslant N_{l_{1}}$.

For $J>1$, \hyperref[n-range]{Lemma~\ref*{n-range}} gives
    \begin{equation*}
\begin{aligned}
N_{l_{J}}&\geqslant |n_{J}-\tilde{n}^{(J)}|\geqslant 1.9l_{J} -\frac{3}{2}l_{J} >0,
\end{aligned}
\end{equation*}
    together with \eqref{rhoooo} imply that $J$-th step is resonant and $n^{*}_{J}=n_{J}-\tilde{n}^{(J)}$.
    Then $\tilde{\rho}_{J+1}=\frac{1}{2} \langle n_{J}-\tilde{n}^{(J+1)}, \alpha \rangle=0$ and $(J+1)$-th step is non-resonant since two adjacent steps cannot be resonant simultaneously.
    Moreover $\tilde{n}^{(J+2)}=\tilde{n}^{(J+1)}$ and $\tilde{\rho}_{J+2}=\tilde{\rho}_{J+1}=0$.

Next, we assume that $j$-th step is non-resonant and $\tilde{\rho}_{j+1}=0$ for some $j\geqslant J+1$ and then we consider $(j+1)$-step. By \eqref{rho-rho} and Diophantine condition, we have
\begin{equation*}
\begin{aligned}
&{\operatorname{dist}}(2\rho_{j+1}-\langle n,\alpha\rangle,{\mathbb{Z}})\\
&\quad\geqslant\frac{\gamma}{N_{l_{j+1}}^{\tau}} -2l_{j+1}^{43\tau-{\frac{(2+\delta)k}{4}}} >\varepsilon_{l_{j+1}}^{1/7},\qquad0<|n|\leqslant N_{l_{j+1}}.
\end{aligned}
\end{equation*}
which deduces that $(j+1)$-th step is non-resonant , $\tilde{n}^{(j+2)}=\tilde{n}^{(j+1)}$ and $\tilde{\rho}_{j+2}=\tilde{\rho}_{j+1}=0$.
\end{proof}
By \hyperref[6.5]{Lemma~\ref*{6.5}}, step $J$ is resonant. Let $j_{0}\leqslant J$ denote the first resonant step. We next estimate $\mathcal{M}_{j}$, $\mathcal{X}_{j}$ and $\mathcal{Z}_{j}$.
\begin{lemma}
    \begin{equation}
\mathcal{M}_{{j_{0}}+1}\leqslant 24\varepsilon_{l_{j_{0}}}^{-\frac{1}{7}},\label{Mj01}
\end{equation}
\begin{equation}
\mathcal{Z}_{{j_{0}}+1}\geqslant \frac{1}{100} \varepsilon_{l_{j_{0}}}^{\frac{1}{7}},\label{Zj01}
\end{equation}
\begin{equation}
\mathcal{X}_{{j_{0}}+1} \leqslant 24 \varepsilon_{l_{j_{0}}}^{\frac{2}{3}}\label{Xj01}.
\end{equation}
    \end{lemma}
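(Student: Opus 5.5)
Before step $j_{0}$ every step is non-resonant, so first we track $B^{(j_{0})}$ and then analyse the single resonant conjugation at step $j_{0}$. We have $B^{(1)}=I$, and each non-resonant factor satisfies $|B_{j}-I|_{\tilde h_{j}}\leqslant12l_{j}^{127\tau-(1+\frac{\delta}{2})k}$ by \eqref{n-r-c}. Since $k>361\tau$, these bounds are summable with a tiny sum. Hence $|B^{(j_{0})}-I|_{h_{j_{0}}}\leqslant Cl_{1}^{127\tau-(1+\frac{\delta}{2})k}$, and also $\tilde n^{(j_{0})}=0$. Consequently $W_{j_{0}}=\operatorname{Ad}(B^{(j_{0})})W$ is within a very small error of the constant matrix $W=\frac12\begin{pmatrix}-i&-i\\ i&i\end{pmatrix}$. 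In the Fourier norm, the error is bounded by $4|B^{(j_{0})}-I|_{h_{j_{0}}}|W|$.

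At step $j_{0}$ we have $B_{j_{0}}=Q_{n^{*}_{j_{0}}}e^{Y_{j_{0}}}P_{j_{0}}$, and therefore
\begin{equation*}
W_{j_{0}+1}=Q_{n^{*}_{j_{0}}}e^{Y_{j_{0}}}\big(P_{j_{0}}W_{j_{0}}P_{j_{0}}^{-1}\big)e^{-Y_{j_{0}}}Q_{n^{*}_{j_{0}}}^{-1}.
\end{equation*}
The rotation $Q_{n^{*}_{j_{0}}}$ only multiplies the off-diagonal entry by $e^{-2\pi i\langle n^{*}_{j_{0}},\theta\rangle}$. Because $\tilde n^{(j_{0}+1)}=n^{*}_{j_{0}}$, this factor is exactly absorbed into the notation for $W_{j_{0}+1}$. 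So $w_{j_{0}+1}$ and $u_{j_{0}+1}$ are the entries of $e^{Y_{j_{0}}}(P_{j_{0}}W_{j_{0}}P_{j_{0}}^{-1})e^{-Y_{j_{0}}}$.

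For \eqref{Mj01}, we bound the Fourier norm by $\|P_{j_{0}}\|^{2}|W_{j_{0}}|_{h_{j_{0}}}|e^{\pm Y_{j_{0}}}|^{2}$. Using $\|P_{j_{0}}\|^{2}\leqslant3\varepsilon_{l_{j_{0}}}^{-1/7}$, $|W_{j_{0}}|\leqslant 2$, and $|e^{\pm Y}|\leqslant 1+36l^{124\tau-(1+\frac{\delta}{2})k}$, this gives at most $8\varepsilon^{-1/7}$. Summing the two entries then gives $24\varepsilon^{-1/7}$. The norms used for $Y$ are on $\tilde h_{j_{0}}$, whereas $\mathcal M$ is measured on $h_{j_{0}+1}$. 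This is harmless because $h_{j_{0}+1}<\tilde h_{j_{0}}$.

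For \eqref{Zj01}, we first use Lemma~\ref{lemma2.2}, with $B=e^{-Y_{j_{0}}}$, to reduce the estimate to the average of the $(1,2)$-entry of $P_{j_{0}}\langle W_{j_{0}}\rangle P_{j_{0}}^{-1}$. The resulting loss is $4|e^{Y}-I||W|\ll\varepsilon^{1/7}$. Lemma~\ref{lemma2.1} then bounds this average from below by $\|[A_{j_{0}},\langle W_{j_{0}}\rangle]\|/(2|\sin2\pi\rho_{j_{0}}|\|P_{j_{0}}\|^{2})$.

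\textbf{Key point.} The commutator $[A_{j_{0}},W]$ is of order one. Indeed, $A_{j_{0}}$ differs from $A_{1}=M\begin{pmatrix}E&-1\\1&0\end{pmatrix}M^{-1}$ by a summable small amount, and $[A_{1},W]$ equals $M$ times $\begin{pmatrix}-1&0\\E&1\end{pmatrix}$ times $M^{-1}$, whose norm is at least $1$. Also $|\sin|\leqslant1$ and $\|P_{j_{0}}\|^{2}\leqslant3\varepsilon^{-1/7}$. Together these give a lower bound of the form $c\,\varepsilon^{1/7}$, with constant at least $1/100$ after the losses are absorbed.

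For \eqref{Xj01}, we need the coefficients at $|n|\geqslant|n^{*}_{j_{0}}|$. Since $\tilde n^{(j_{0})}=0$, the matrix $P_{j_{0}}W_{j_{0}}P_{j_{0}}^{-1}$ has only tiny nonconstant Fourier modes. The constant mode contributes only at $n=0$, while $|n|\geqslant|n^{*}_{j_{0}}|\geqslant1$. So the high coefficients come only from the cross terms with $Y_{j_{0}}$ and from the nonconstant part of $W_{j_{0}}$. These are bounded, using the $e^{-2\pi|n|h}$ weight, by $\|P\|^{2}|e^{Y}-I|$ plus $\|P\|^{2}|B^{(j_{0})}-I|$. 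That is $O\big(\varepsilon^{-1/7}l_{j_{0}}^{127\tau-(1+\frac{\delta}{2})k}\big)$. Recalling $\varepsilon_{l}=l^{-21\tau}$ and $k>361\tau$, this is far smaller than $\varepsilon^{2/3}$.

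\textbf{Main obstacle.} I expect \eqref{Zj01} to be the hardest step: it requires a uniform order-one lower bound on $\|[A_{j_{0}},\langle W_{j_{0}}\rangle]\|$ that is independent of $E$. I would prove this first, by perturbing from the explicit commutator $[A_{1},W]$.
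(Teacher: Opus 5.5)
Your treatment of \eqref{Mj01} and \eqref{Zj01} is essentially the paper's: conjugate by $P_{j_0}$, perturb $[A_{j_0},\langle W_{j_0}\rangle]$ from the explicit $[A_1,W]$, then apply Lemma~\ref{lemma2.1} and Lemma~\ref{lemma2.2}. Your argument for \eqref{Xj01}, however, has a genuine gap when $j_0>1$ (so the real difficulty is there, not in \eqref{Zj01}).

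Your own first paragraph shows that the nonconstant part of $W_{j_0}$ has size only $|B^{(j_0)}-I|_{h_{j_0}}\lesssim l_1^{127\tau-(1+\frac{\delta}{2})k}$. It is created at step $1$ and does not improve with $j_0$. After conjugation by $P_{j_0}$, this contribution is $\lesssim\varepsilon_{l_{j_0}}^{-1/7}l_1^{127\tau-(1+\frac{\delta}{2})k}=l_{j_0}^{3\tau}l_1^{127\tau-(1+\frac{\delta}{2})k}$. It is not $O\big(\varepsilon^{-1/7}l_{j_0}^{127\tau-(1+\frac{\delta}{2})k}\big)$ as you wrote; you silently replaced $l_1$ by $l_{j_0}$. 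The target is $24\varepsilon_{l_{j_0}}^{2/3}=24l_{j_0}^{-14\tau}$, and your bound misses it once $l_{j_0}^{17\tau}\gg l_1^{(1+\frac{\delta}{2})k-127\tau}$. That happens for all large $j_0$, and $j_0$ is not bounded uniformly, since it can be as large as $J$. Appealing to ``the $e^{-2\pi|n|h}$ weight'' does not help if you only know $|n|\geqslant|n^{*}_{j_0}|\geqslant1$, because then the weight is essentially $1$.

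The missing input is \eqref{r20} of Lemma~\ref{n-range}. For a resonant step $j_0>1$, the preceding step is nonresonant, which forces $|n^{*}_{j_0}|>l_{j_0}$. Hence $e^{-2\pi|n|h_{j_0}}\leqslant l_{j_0}^{-17\tau}$ whenever $|n|\geqslant|n^{*}_{j_0}|=|\tilde{n}^{(j_0+1)}|$.

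With this, no decomposition into constant part plus small error is needed. Your proof of \eqref{Mj01} actually bounds $|w_{j_0+1}|_{h_{j_0}}+|u_{j_0+1}|_{h_{j_0}}\leqslant24\varepsilon_{l_{j_0}}^{-1/7}$ on the wider strip $h_{j_0}$. So every coefficient with $|n|\geqslant|n^{*}_{j_0}|$ is at most $24\varepsilon_{l_{j_0}}^{-1/7}l_{j_0}^{-17\tau}=24\varepsilon_{l_{j_0}}^{2/3}$, which is exactly the paper's argument.

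Keep this estimate on $h_{j_0}$ rather than on $h_{j_0+1}$, where $\mathcal{M}_{j_0+1}$ is defined. On $h_{j_0+1}$ one has $2\pi l_{j_0}h_{j_0+1}\to0$, so the weight gives nothing.

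Your near-constancy argument is then needed only for $j_0=1$. There $B^{(1)}=I$ and $W_1=W$ is constant, so it coincides with the paper's bound $4|e^{Y_1}-I|_{h_1}\|P_1\|^{2}\leqslant432l_1^{127\tau-(1+\frac{\delta}{2})k}$.
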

    \begin{proof}
   For $1\leqslant j\leqslant j_{0}$, we have
\begin{equation}\label{wpre}
    |W_{j}|_{h_{j}}\leqslant2, \qquad \|[A_{j},\langle W_{j}\rangle]\|\geqslant\frac{1}{2}.
\end{equation} 
From the definition of $j_{0}$ and \eqref{resonantB_j}, the former inequality holds trivially. Moreover, by simple calculation, we have $\|[A_{1},\langle W_{1}\rangle]\|\geqslant1$. Note that $W_{j+1}=\operatorname{Ad}(B_{j})W_{j}$, together with \eqref{resonantB_j}, we obtain
    \begin{equation*}
\begin{aligned}
|W_{j+1}-W_{j}|_{h_{j}}&\leqslant 2|B_{j}^{-1}|_{h_{j}}|B_{j}-I|_{h_{j}}|W_{j}|_{h_{j}}\\
&\leqslant 96l_{j}^{127\tau-{(1+\frac{\delta}{2})k}},
\end{aligned}
\end{equation*}
    and together with the second inequality in \eqref{n-r-c} and
    \begin{equation*}
\begin{aligned}
\|[A_{j}, \langle W_{j} \rangle]-[A_{1}, \langle W_{1} \rangle]\|\leqslant 2(\|A_{j}-A_{1}\|\|W_{j}\|+\|A_{1}\|\|W_{j}-W_{1}\|),
\end{aligned}
\end{equation*}
    this gives $\|[A_{j}, \langle W_{j} \rangle]\|\geqslant \frac{1}{2}$.
By the first estimate in \eqref{r19}, we have
    \begin{equation*}
\begin{aligned}
|\operatorname{Ad}(e^{Y_{j_{0}}}P_{j_{0}})W_{j_{0}}-\operatorname{Ad}(P_{j_{0}})W_{j_{0}}|_{h_{j_{0}}}&\leqslant 2|e^{-Y_{j_{0}}}|_{h_{j_{0}}}|e^{Y_{j_{0}}}-I|_{h_{j_{0}}}\|P_{j_{0}}\|^{2}|W_{j_{0}}|_{h_{j_{0}}}\\
&\leqslant 144{l_{j_{0}}}^{124\tau-{(1+\frac{\delta}{2})k}}\|P_{j_{0}}\|^{2}|W_{j_{0}}|_{h_{j_{0}}}.
\end{aligned}
\end{equation*}
    Then by the triangle inequality, we obtain that
    \begin{equation*}
\begin{aligned}
\mathcal{M}_{{j_{0}}+1}&\leqslant 2|\operatorname{Ad}(e^{Y_{j_{0}}}P_{j_{0}})W_{j_{0}}|_{h_{j_{0}}}\\
&\leqslant 2(|\operatorname{Ad}(P_{j_{0}})W_{j_{0}}|_{h_{j_{0}}}+|\operatorname{Ad}(e^{Y_{j_{0}}}P_{j_{0}})W_{j_{0}}-\operatorname{Ad}(P_{j_{0}})W_{j_{0}}|_{h_{j_{0}}})\\
&\leqslant 2\|P_{j_{0}}\|^{2}|W_{j_{0}}|_{h_{j_{0}}}(1+144{l_{j_{0}}}^{124\tau-{(1+\frac{\delta}{2})k}})\\
&\leqslant 8\|P_{j_{0}}\|^{2} \leqslant 24\varepsilon_{l_{j_{0}}}^{-\frac{1}{7}}.
\end{aligned}
\end{equation*}
As for $\mathcal{Z}_{{j_{0}}+1}$, \hyperref[lemma2.1]{Lemma~\ref*{lemma2.1}} and \hyperref[lemma2.2]{Lemma~\ref*{lemma2.2}} yield
    \begin{equation*}
\begin{aligned}
\mathcal{Z}_{{j_{0}}+1}&\geqslant \frac{1}{2}{|\sin({2\pi}\rho_{j_{0}})|}^{-1}\|P_{j_{0}}\|^{-2}\|[A_{j_{0}}, \langle W_{j_{0}} \rangle]\|-4|e^{Y_{j_{0}}}-I|_{h_{j_{0}}}\|P_{j_{0}}\|^{2}|W_{j_{0}}|_{h_{j_{0}}}\\
&\geqslant \frac{1}{100}\varepsilon_{l_{j_{0}}}^{\frac{1}{7}}.
\end{aligned}
\end{equation*}
 In the following, we give the estimate of $\mathcal{X}_{{j_{0}}+1}$. If $j_{0}=1$, then 
    \begin{equation*}
\begin{aligned}
\mathcal{X}_{2}&=\sup_{|n|\geqslant |n_{1}^{*}|}\max\{|\widehat{w}_{2}(n)|,|\widehat{u}_{2}(n)|\}\\
&\leqslant |\operatorname{Ad}(e^{Y_{1}}P_{1})(W)-\langle \operatorname{Ad}(e^{Y_{1}}P_{1})(W) \rangle|_{h_{1}}\\
&\leqslant 4|e^{Y_{1}}-I|_{h_{1}}\|P_{1}\|^{2}\\
&\leqslant 432 l_{1}^{127\tau-{(1+\frac{\delta}{2})k}}\leqslant 24 \varepsilon_{l_{1}}^{\frac{2}{3}}.
\end{aligned}
\end{equation*}
    The last inequality holds since ${k}>200\tau$. And if $j_{0}>1$, then
    \begin{equation*}
\begin{aligned}
\mathcal{X}_{{j_{0}}+1}&\leqslant (|w_{{j_{0}}+1}|_{h_{j_{0}}}+|u_{{j_{0}}+1}|_{h_{j_{0}}})e^{-{2\pi}|\tilde{n}^{({j_{0}}+1)}|h_{j_{0}}}\\
&\leqslant {24\varepsilon_{l_{j_{0}}}^{-1/7}}e^{-{2\pi}|n_{j_{0}}^{*}|h_{j_{0}}}\\
&\leqslant 24 \varepsilon_{l_{j_{0}}}^{\frac{2}{3}}.
\end{aligned}
\end{equation*}
\end{proof}

\begin{lemma}
    For $j_{0}+1\leqslant j \leqslant J$, if j-th step is non-resonant, then 
    \begin{equation}
\label{r23}
\begin{aligned}
\mathcal{M}_{j+1}&\leqslant \mathcal{M}_{j}+{200}l_{j}^{{178}\tau-{(1+\frac{\delta}{2})k}}\mathcal{M}_{j},\\
\mathcal{X}_{j+1}&\leqslant \mathcal{X}_{j}+{200}l_{j}^{{178}\tau-{(1+\frac{\delta}{2})k}}\mathcal{M}_{j},\\
\mathcal{Z}_{j+1}&\geqslant \mathcal{Z}_{j}-{200}l_{j}^{{178}\tau-{(1+\frac{\delta}{2})k}}\mathcal{M}_{j}.
\end{aligned}
\end{equation}
    And if j-th step is resonant, then
    \begin{equation}
\label{r24}
\begin{aligned}
\mathcal{M}_{j+1}&\leqslant {24 l_{j}^{3.2\tau}}\mathcal{M}_{j},\\
\mathcal{X}_{j+1}&\leqslant {24 l_{j}^{-13.63\tau}}\mathcal{M}_{j},\\
\mathcal{Z}_{j+1}&\geqslant \mathcal{Z}_{j}-3\mathcal{X}_{j}-{864}l_{j}^{127\tau-{(1+\frac{\delta}{2})k}}\mathcal{M}_{j}.
\end{aligned}
\end{equation}
\end{lemma}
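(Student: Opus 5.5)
We track how $W_{j+1}=\operatorname{Ad}(B_{j})W_{j}$ depends on the step type given by \hyperref[GJKDXL]{Theorem~\ref*{GJKDXL}}. In both cases we write $W_{j+1}$ in the normalized form with phase $e^{\mp2\pi i\langle\tilde n^{(j+1)},\theta\rangle}$ on the off-diagonal entries. We then compare Fourier coefficients of $w_{j+1},u_{j+1}$ with those of $w_{j},u_{j}$, using Lemmas~\ref{lemma2.1}, \ref{lemma2.2} and \ref{n-range}.

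\emph{Non-resonant step.} Here $B_{j}=e^{Y_{j}}$ with $|B_{j}-I|_{\tilde h_{j}}\leqslant12l_{j}^{127\tau-(1+\frac{\delta}{2})k}$ by \eqref{n-r-c}, and $\tilde n^{(j+1)}=\tilde n^{(j)}$. The estimate must be done in the $h_{j}$ norm, which requires care, since $\tilde h_{j}$ may equal $\frac34h_{j-1}$ rather than $h_{j}$.
\begin{enumerate}[label=(\roman*)]
\item We will check that $h_{j}\leqslant\tilde h_{j}$ in both cases; for large $M_{0}$ this holds since $h_{j}/h_{j-1}\approx l_{j-1}/l_{j}\to0$. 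This allows the $\tilde h_{j}$ estimate to be used in the $h_{j}$ norm.
\item Writing $\operatorname{Ad}(B_{j})W_{j}-W_{j}=(B_{j}-I)W_{j}B_{j}^{-1}+W_{j}(B_{j}^{-1}-I)$ gives $|W_{j+1}-W_{j}|_{h_{j}}\leqslant C l_{j}^{127\tau-(1+\frac{\delta}{2})k}|W_{j}|_{h_{j}}$.
\item The phase factor $e^{\mp2\pi i\langle\tilde n^{(j)},\theta\rangle}$ is common to $W_{j}$ and $W_{j+1}$. Stripping it costs at most $e^{2\pi|\tilde n^{(j)}|h_{j}}\leqslant l_{j}^{25.5\tau}$ by \eqref{23}.
\item Passing from $h_{j}$ to $h_{j+1}\leqslant h_{j}$ costs nothing, since the norms decrease.
\end{enumerate}
Then $|W_{j}|_{h_{j}}\leqslant C\mathcal M_{j}$ after restoring the phase, which costs another factor $l_{j}^{25.5\tau}$. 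Collecting the losses gives exponent $127+51=178$, plus absolute constants. This yields the first inequality of \eqref{r23}. The bounds for $\mathcal X_{j+1}$ and $\mathcal Z_{j+1}$ follow coefficientwise, since each Fourier coefficient is bounded by the norm: $|\widehat{\,\cdot\,}(n)|\leqslant|\cdot|_{h}$. Here $\mathcal Z$ is the zero coefficient of $w$, and the $\sup$ over $|n|\geqslant|\tilde n^{(j)}|$ is unchanged.

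\emph{Resonant step.} Here $B_{j}=Q_{n_{j}^{*}}e^{Y_{j}}P_{j}$. We first apply $\operatorname{Ad}(P_{j})$. Lemma~\ref{lemma2.1} gives $|\widehat w_{+}(n)|\geqslant|a|^{2}(|\widehat w(n)|-3\max\{|\widehat w(-n)|,|\widehat u(n)|\})$. We use it at the frequency $n=\tilde n^{(j)}$ corresponding to $\langle w_{j}\rangle$, with $|a|^{2}\geqslant1$ since $\|P\|\geqslant1$. The opposite-frequency term is then bounded by $\mathcal X_{j}$, because $|-\tilde n^{(j)}|\geqslant|\tilde n^{(j)}|$. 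Next, $e^{Y_{j}}$ is handled by Lemma~\ref{lemma2.2}, which gives the $864\,l_{j}^{127\tau-(1+\frac{\delta}{2})k}\mathcal M_{j}$ loss after absorbing $\|P_{j}\|^{2}\leqslant3\varepsilon_{l_{j}}^{-1/7}$ and the phase factors. Finally, $Q_{n_{j}^{*}}$ merely shifts the phase from $\tilde n^{(j)}$ to $\tilde n^{(j+1)}$, so $\mathcal Z_{j+1}$ reads off the same coefficient. This gives the third bound of \eqref{r24}, up to the normalization $|a|^{2}\geqslant1$.

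For $\mathcal M_{j+1}$ we bound $|\operatorname{Ad}(e^{Y_{j}}P_{j})W_{j}|_{h_{j+1}}$ by $C\|P_{j}\|^{2}|W_{j}|_{h_{j}}$. Conjugation by $Q$ preserves these norms after the phase normalization. The losses are $\|P_{j}\|^{2}\leqslant3l_{j}^{3\tau}$ and the phase factor $e^{2\pi|\tilde n^{(j)}|h_{j}}\leqslant l_{j}^{\tau/10}$ from \eqref{j}. Together with $h_{j+1}\leqslant h_{j}$, this gives $24l_{j}^{3.2\tau}\mathcal M_{j}$.

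\emph{Main obstacle.} The bound on $\mathcal X_{j+1}$ is the hardest step. We will bound coefficients with $|n|\geqslant|\tilde n^{(j+1)}|$ by $|W_{j+1}|_{h_{j}}e^{-2\pi|\tilde n^{(j+1)}|h_{j}}$, using the $h_{j}$-analyticity of the conjugated quantity before passing to $h_{j+1}$. Combining this with \eqref{r21}, which gives the factor $l_{j}^{-16.83\tau}$, and with $\mathcal M$-growth $l_{j}^{3.2\tau}$ yields $l_{j}^{-13.63\tau}$. The delicate point is that this requires $W_{j+1}$, rather than only its phase-stripped entries, to be controlled in the wider strip $h_{j}$. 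This is exactly what the $\tilde h$-bookkeeping and \eqref{j} provide, and we expect it to be the step needing most care.
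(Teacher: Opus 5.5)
Your plan follows the paper's proof essentially step by step. In the non-resonant case, the exponent $178\tau=127\tau+51\tau$ comes from $|e^{Y_j}-I|_{h_j}$ together with the phase factor $e^{2\pi|\tilde n^{(j)}|h_j}$ used twice, and $\mathcal X_{j+1},\mathcal Z_{j+1}$ are read off coefficientwise. In the resonant case, the chain is Lemma~\ref{lemma2.1} for $P_j$, then the perturbative estimate for $e^{Y_j}$ with loss $8|e^{Y_j}-I|_{h_j}\|P_j\|^2\mathcal M_j\leqslant 864\,l_j^{127\tau-(1+\frac{\delta}{2})k}\mathcal M_j$, and finally $l_j^{3.2\tau}\cdot l_j^{-16.83\tau}$ for $\mathcal X_{j+1}$. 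This is exactly what the paper does.

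There are two small bookkeeping slips. First, the coefficient of the full entry $w_je^{-2\pi i\langle\tilde n^{(j)},\theta\rangle}$ that carries $\langle w_j\rangle$ sits at $-\tilde n^{(j)}$, not $\tilde n^{(j)}$. So the two error terms in Lemma~\ref{lemma2.1} are $\widehat w_j(2\tilde n^{(j)})$ and $\widehat u_j(-\tilde n^{(j)})$, and both are $\leqslant\mathcal X_j$. Second, the $0.2\tau$ in $24l_j^{3.2\tau}$ is the phase factor counted twice, $e^{4\pi|\tilde n^{(j)}|h_j}\leqslant l_j^{\tau/5}$.

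The one real problem is in the paragraph you flag as the main obstacle: the mechanism is stated backwards.

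The bound $|\widehat w_{j+1}(n)|\leqslant|W_{j+1}|_{h_j}e^{-2\pi|\tilde n^{(j+1)}|h_j}$ does not hold with the full matrix norm. The upper-right entry of $W_{j+1}$ is $w_{j+1}e^{-2\pi i\langle\tilde n^{(j+1)},\theta\rangle}$, so Fourier decay of $W_{j+1}$ only gives $|\widehat w_{j+1}(n)|\leqslant|W_{j+1}|_{h_j}e^{-2\pi|n-\tilde n^{(j+1)}|h_j}$. This has no decay at $n=\tilde n^{(j+1)}$. Indeed, $|W_{j+1}|_{h_j}\geqslant\mathcal Z_{j+1}e^{2\pi|\tilde n^{(j+1)}|h_j}$. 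So controlling $W_{j+1}$ itself on $\mathcal S_{h_j}$, which is trivially possible since $Q_{n_j^*}$ is entire, exactly cancels the gain from \eqref{r21}.

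What you need is control of the phase-stripped functions on the wider strip. Set $X\coloneqq\operatorname{Ad}(e^{Y_j}P_j)W_j$. Since $\operatorname{Ad}(Q_{n_j^*})$ only multiplies the off-diagonal entry by $e^{-2\pi i\langle n_j^*,\theta\rangle}$, you have $iu_{j+1}=X_{11}$ and $w_{j+1}=X_{12}e^{2\pi i\langle\tilde n^{(j)},\theta\rangle}$. Both are holomorphic on $\mathcal S_{h_j}$, because $e^{Y_j}$ lives on $\tilde h_j\geqslant h_j$.

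Your own $\mathcal M_{j+1}$ computation, carried out on $h_j$ instead of $h_{j+1}$, gives $|u_{j+1}|_{h_j}+|w_{j+1}|_{h_j}\leqslant24l_j^{3.2\tau}\mathcal M_j$. Hence $\max\{|\widehat w_{j+1}(n)|,|\widehat u_{j+1}(n)|\}\leqslant24l_j^{3.2\tau}\mathcal M_je^{-2\pi|n|h_j}$, and \eqref{r21} finishes the argument for all $|n|\geqslant|\tilde n^{(j+1)}|$. This is how the paper argues; with this correction your proof is complete.
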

\begin{proof}
{\textbf{Non-resonant case.} Put $m=\widetilde{n}^{(j)}$. Since $\widetilde{n}^{(j+1)}=m$, then we have
\begin{equation*}
\begin{aligned}
|u_{j+1}-u_{j}|_{h_{j}}+|w_{j+1}-w_{j}|_{h_{j}} &\leqslant2e^{{2\pi}|m|h_{j}}|W_{j+1}-W_{j}|_{h_{j}}\\
&\leqslant16|e^{Y_{j}}-I|_{h_{j}}e^{{4\pi}|m|h_{j}}\mathcal{M}_{j}\\
&\leqslant{200}l_{j}^{{178}\tau-{(1+\frac{\delta}{2})k}}\mathcal{M}_{j}.
\end{aligned}
\end{equation*}
Restriction to $h_{j+1}$ proves the upper bound for $\mathcal{M}_{j+1}$. The same estimate bounds the change in every Fourier coefficient, and therefore proves the bounds for $\mathcal{X}_{j+1}$ and $\mathcal{Z}_{j+1}$.

\textbf{Resonant case.} The two adjacent steps cannot both be resonant, so \eqref{j} gives $e^{{2\pi}|m|h_{j}}\leqslant l_{j}^{\tau/10}$. After removing the phase introduced by $Q_{n_{j}^{*}}$, the functions $u_{j+1}$ and $w_{j+1}$ extend to the wider strip $h_{j}$, and satisfy
\begin{equation*}
|u_{j+1}|_{h_{j}}+|w_{j+1}|_{h_{j}} \leqslant8\|P_{j}\|^{2}e^{{4\pi}|m|h_{j}}\mathcal{M}_{j} \leqslant24l_{j}^{3.2\tau}\mathcal{M}_{j}.
\end{equation*}
This proves the bound for $\mathcal{M}_{j+1}$, and Fourier decay on this wider strip gives
\begin{equation*}
\mathcal{X}_{j+1}\leqslant24l_{j}^{3.2\tau}\mathcal{M}_{j} e^{-{2\pi}|\widetilde{n}^{(j+1)}|h_{j}} \leqslant24l_{j}^{-13.63\tau}\mathcal{M}_{j}.
\end{equation*}

Apply \hyperref[lemma2.1]{Lemma~\ref*{lemma2.1}} to the upper-right coefficient at $-m$. The main term involves $\langle w_{j}\rangle$, and the other two coefficients are bounded by $\mathcal{X}_{j}$. Thus we have
\begin{equation*}
    8|e^{Y_{j}}-I|_{h_{j}}\|P_{j}\|^{2}\mathcal{M}_{j}\leqslant864l_{j}^{127\tau-(1+\frac{\delta}{2})k}\mathcal{M}_{j}.
\end{equation*}
Together, these estimates yield the last inequality in \eqref{r24}.}
\end{proof}

\begin{lemma}\label{lemma6.4}
    For all $j_{0}+1\leqslant j \leqslant J$, we have
    \begin{equation}
\mathcal{M}_{j} \leqslant \varepsilon_{l_{j}}^{-\frac{1}{3}} ,\label{Mj}
\end{equation}
\begin{equation}
\mathcal{Z}_{j} \geqslant 10\mathcal{X}_{j}+\frac{1}{2^{j}}\mathcal{Z}_{j_{0}+1}.\label{Zj}
\end{equation}
\end{lemma}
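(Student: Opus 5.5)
We argue by induction on $j$ from $j_{0}+1$ to $J$, proving \eqref{Mj} and \eqref{Zj} together. The base case $j=j_{0}+1$ comes from \eqref{Mj01}, \eqref{Zj01} and \eqref{Xj01}. Since $\varepsilon_{l_{j}}=l_{j}^{-21\tau}$, we have $24\varepsilon_{l_{j_{0}}}^{-1/7}\leqslant\varepsilon_{l_{j_{0}+1}}^{-1/3}$. Indeed $l_{j_{0}+1}=l_{j_{0}}^{3/2}$, so $\varepsilon_{l_{j_{0}+1}}^{-1/3}=l_{j_{0}}^{10.5\tau}\gg l_{j_{0}}^{3\tau}$, and this gives \eqref{Mj}. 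For \eqref{Zj}, the bound \eqref{Xj01} gives $10\mathcal{X}_{j_{0}+1}\leqslant240\varepsilon_{l_{j_{0}}}^{2/3}$. This is negligible compared with $\frac{1}{100}\varepsilon_{l_{j_{0}}}^{1/7}-2^{-(j_{0}+1)}\mathcal{Z}_{j_{0}+1}\geqslant\frac12\mathcal{Z}_{j_{0}+1}\geqslant\frac{1}{200}\varepsilon_{l_{j_{0}}}^{1/7}$, because $2^{-(j_{0}+1)}\leqslant\frac14$.

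For the inductive step from $j$ to $j+1$ we split into two cases.

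\emph{Non-resonant step $j$.} By \eqref{r23}, $\mathcal{M}_{j+1}\leqslant(1+200l_{j}^{178\tau-(1+\frac{\delta}{2})k})\varepsilon_{l_{j}}^{-1/3}\leqslant\varepsilon_{l_{j+1}}^{-1/3}$. For $\mathcal{Z}$ we have
\[
\mathcal{Z}_{j+1}-10\mathcal{X}_{j+1}\geqslant\mathcal{Z}_{j}-10\mathcal{X}_{j}-2200\,l_{j}^{178\tau-(1+\frac{\delta}{2})k}\varepsilon_{l_{j}}^{-1/3}.
\]
The error term equals $2200\,l_{j}^{185\tau-(1+\frac{\delta}{2})k}$, and it has to be absorbed into the slack $2^{-j}\mathcal{Z}_{j_{0}+1}-2^{-(j+1)}\mathcal{Z}_{j_{0}+1}=2^{-(j+1)}\mathcal{Z}_{j_{0}+1}\geqslant2^{-(j+1)}\frac{1}{100}l_{j_{0}}^{-3\tau}$. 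Since $l_{j}$ grows doubly exponentially while $2^{-j}$ and $l_{j_{0}}^{-3\tau}\geqslant l_{j}^{-3\tau}$ decay much more slowly, and since $k>361\tau$, this holds for $M_{0}$ large.

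\emph{Resonant step $j$.} In this case $j\geqslant j_{0}+1$ is a later resonant step and we use \eqref{r24}. For $\mathcal{M}$,
\[
\mathcal{M}_{j+1}\leqslant24l_{j}^{3.2\tau}\varepsilon_{l_{j}}^{-1/3}=24l_{j}^{10.2\tau},
\]
which is at most $\varepsilon_{l_{j+1}}^{-1/3}=l_{j}^{10.5\tau}$, and this gives \eqref{Mj}. The point of \eqref{Zj} is exactly that it survives the $-3\mathcal{X}_{j}$ loss in \eqref{r24}. Using \eqref{Zj} at step $j$ to bound $3\mathcal{X}_{j}\leqslant\frac{3}{10}\mathcal{Z}_{j}$, we get
\[
\mathcal{Z}_{j+1}\geqslant\tfrac{7}{10}\mathcal{Z}_{j}-864l_{j}^{127\tau-(1+\frac{\delta}{2})k}\varepsilon_{l_{j}}^{-1/3}.
\]
On the other side, $10\mathcal{X}_{j+1}\leqslant240l_{j}^{-13.63\tau}\mathcal{M}_{j}\leqslant240l_{j}^{-6.63\tau}$, which is tiny.

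The main obstacle is that the factor $\frac{7}{10}$ loses a constant proportion of $\mathcal{Z}$, whereas the target only permits halving $2^{-j}\mathcal{Z}_{j_{0}+1}$. We handle it as follows. From $\mathcal{Z}_{j}\geqslant2^{-j}\mathcal{Z}_{j_{0}+1}$ we obtain $\frac{7}{10}\mathcal{Z}_{j}\geqslant\frac12\cdot2^{-j}\mathcal{Z}_{j_{0}+1}+\frac15\mathcal{Z}_{j}$. The margin $\frac15\mathcal{Z}_{j}\geqslant\frac15\cdot2^{-j}\frac{1}{100}l_{j_{0}}^{-3\tau}$ must then dominate the errors $240l_{j}^{-6.63\tau}$ and $864l_{j}^{134\tau-(1+\frac{\delta}{2})k}$.

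Since $j$ is resonant and lies after $j_{0}$, we have $l_{j}\geqslant l_{j_{0}+2}=l_{j_{0}}^{9/4}$. Hence $l_{j}^{-6.63\tau}\leqslant l_{j_{0}}^{-14.9\tau}\ll2^{-j}l_{j_{0}}^{-3\tau}$, and the remaining term is even smaller. This closes the induction, provided $2^{-j}$ is dominated by powers of $l_{j}$, which again is guaranteed by the doubly exponential growth of $l_{j}$.
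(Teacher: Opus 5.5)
Your induction is correct and follows the paper's proof. It uses the same base case from \eqref{Mj01}--\eqref{Xj01} and the same non-resonant/resonant split via \eqref{r23} and \eqref{r24}, absorbing the errors into the slack $2^{-(j+1)}\mathcal{Z}_{j_0+1}\geqslant 2^{-(j+1)}\frac{1}{100}l_{j}^{-3\tau}$ through the doubly exponential growth of $l_j$. The only real difference is in the resonant step, where the paper writes $\mathcal{Z}_{j}-3\mathcal{X}_{j}\geqslant 7\mathcal{X}_{j}+2^{-j}\mathcal{Z}_{j_0+1}$ directly, so your ``$\frac{7}{10}$ obstacle'' never arises. In your base case, the quantity you compare against should be $(1-2^{-(j_0+1)})\mathcal{Z}_{j_0+1}$, not $\frac{1}{100}\varepsilon_{l_{j_0}}^{1/7}-2^{-(j_0+1)}\mathcal{Z}_{j_0+1}$.
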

\begin{proof}
    \eqref{Mj} holds trivally for $j=j_{0}+1$ since \eqref{Mj01}. Next we assume \eqref{Mj} holds for $p\geqslant j_{0}+1$, then we need to prove the case $p+1$. If $p$-th step is non-resonant, by the first inequality in \eqref{r23}, it follows that
    \begin{equation*}
\begin{aligned}
\mathcal{M}_{p+1}\leqslant (1+{200}l_{p}^{{178}\tau-{(1+\frac{\delta}{2})k}})\mathcal{M}_{p} \leqslant \varepsilon_{l_{p+1}}^{-\frac{1}{3}}.
\end{aligned}
\end{equation*} 
    If $p$-th step is resonant, from \eqref{r24} and our previous assumptions on $\mathcal{M}_{p}$, we have
    \begin{equation*}
\begin{aligned}
\mathcal{M}_{p+1}\leqslant {24 l_{p}^{3.2\tau}} \mathcal{M}_{p} \leqslant {24 l_{p}^{3.2\tau}} \varepsilon_{l_{p}}^{-\frac{1}{3}} \leqslant \varepsilon_{l_{p+1}}^{-\frac{1}{3}}.
\end{aligned}
\end{equation*}
By \eqref{Zj01}, it follows that 
    \begin{equation}
\label{160t}
\begin{aligned}
l_{p}^{{186}\tau-{(1+\frac{\delta}{2})k}}\leqslant\frac{1}{100}\varepsilon_{l_{p}}^{\frac{1}{7}} \cdot 100l_{p}^{{189}\tau-{(1+\frac{\delta}{2})k}}\leqslant (11\cdot 2^{p+1})^{-1}\mathcal{Z}_{j_{0}+1},
\end{aligned}
\end{equation}
    where the last inequality holds since ${M_{0}}$ is sufficiently large.
    
    As for $\mathcal{Z}_{j}$, by \eqref{Zj01} and \eqref{Xj01}, \eqref{Zj} is true for $j=j_{0}+1$. Assume it holds for $p\geqslant j_{0}+1$, then we consider the case $p+1$. If $p$-th step is non-resonant, by \eqref{r23} and \eqref{Mj}, we have 
    \begin{equation*}
\begin{aligned}
\mathcal{Z}_{p+1}&\geqslant \mathcal{Z}_{p}-{200}l_{p}^{{178}\tau-{(1+\frac{\delta}{2})k}}\mathcal{M}_{p}\geqslant \mathcal{Z}_{p}-l_{p}^{{186}\tau-{(1+\frac{\delta}{2})k}},\\
\mathcal{X}_{p+1} &\leqslant \mathcal{X}_{p}+{200}l_{p}^{{178}\tau-{(1+\frac{\delta}{2})k}}\mathcal{M}_{p} \leqslant \mathcal{X}_{p}+l_{p}^{{186}\tau-{(1+\frac{\delta}{2})k}}.
\end{aligned}
\end{equation*}
Combining the above inequality with the earlier assumption $\mathcal{Z}_{p} \geqslant 10\mathcal{X}_{p}+\frac{1}{2^{p}}\mathcal{Z}_{j_{0}+1}$, we obtain 
    \begin{equation*}
\begin{aligned}
\mathcal{Z}_{p+1}&\geqslant 10\mathcal{X}_{p}+\frac{1}{2^{p}}\mathcal{Z}_{j_{0}+1}-l_{p}^{{186}\tau-{(1+\frac{\delta}{2})k}}\\
&\geqslant 10\mathcal{X}_{p+1}+\frac{1}{2^{p}}\mathcal{Z}_{j_{0}+1}-11l_{p}^{{186}\tau-{(1+\frac{\delta}{2})k}}\\
&\geqslant 10\mathcal{X}_{p+1}+\frac{1}{2^{p+1}}\mathcal{Z}_{j_{0}+1}.
\end{aligned}
\end{equation*}

    Otherwise, if $p$-th step is resonant, by \eqref{r24} and the estimate of $\mathcal{M}_{p}$ we arrive that
    \begin{equation*}
\begin{aligned}
\mathcal{X}_{p+1}\leqslant {24 l_{p}^{-13.63\tau}}\mathcal{M}_{p} \leqslant {24 l_{p}^{-6.63\tau}} \leqslant (11\cdot 2^{p+1})^{-1} \mathcal{Z}_{j_{0}+1}.
\end{aligned}
\end{equation*}
    Combining the above inequality with \eqref{160t}, we have
    \begin{equation*}
\begin{aligned}
\mathcal{Z}_{p+1}&\geqslant \mathcal{Z}_{p}-3\mathcal{X}_{p}-{864}l_{p}^{127\tau-{(1+\frac{\delta}{2})k}}\mathcal{M}_{p}\\
&\geqslant \mathcal{Z}_{p}-3\mathcal{X}_{p}-l_{p}^{{186}\tau-{(1+\frac{\delta}{2})k}}\\
&\geqslant 7\mathcal{X}_{p}+\frac{1}{2^{p}}\mathcal{Z}_{j_{0}+1}-(11\cdot 2^{p+1})^{-1}\mathcal{Z}_{j_{0}+1}\\
&\geqslant 10\mathcal{X}_{p+1}+7\mathcal{X}_{p}+(\frac{12}{11\cdot 2^{p+1}}-\frac{1}{11\cdot 2^{p+1}})\mathcal{Z}_{j_{0}+1}\\
&\geqslant 10\mathcal{X}_{p+1}+\frac{1}{2^{p+1}}\mathcal{Z}_{j_{0}+1}.
\end{aligned}
\end{equation*}
    \end{proof}

\begin{lemma}
Write $A_{j}=\begin{pmatrix}a_{j}&b_{j}\\\overline{b_{j}}&\overline{a_{j}}\end{pmatrix}$. Then $|b_{j}|\geqslant|n_{J}|^{-(1+\frac{\delta}{2})k-3.5\tau}$ for every $j\geqslant J+1$.
\end{lemma}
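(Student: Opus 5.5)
The plan is to read off $b_{J+1}$ from the resonant estimate \eqref{12} at step $J$, and then show that the later non-resonant steps barely change it. By Lemma~\ref{6.5}, step $J$ is resonant with $n_{J}^{*}=n_{J}-\tilde n^{(J)}$, and $\mathcal{K}\cap\mathcal{A}_{J}=\{n_{J}\}$. So the first case of \eqref{12} applies and gives
\begin{equation*}
|b_{J+1}|\geqslant|\widehat{T}_{J}(n_{J}^{*})|-324\,l_{J}^{263\tau-(2+\delta)k}.
\end{equation*}

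First I bound $|\widehat{T}_{J}(n_{J}^{*})|$ from below. Since $V_{J}W=|n_{J}|^{-(1+\frac{\delta}{2})k}\cos(2\pi\langle n_{J},\theta\rangle)W$, the matrix $\operatorname{Ad}(B^{(J)})(V_{J}W)$ equals $|n_{J}|^{-(1+\frac{\delta}{2})k}\cos(2\pi\langle n_{J},\theta\rangle)W_{J}$. Its upper-right entry is
\begin{equation*}
\tfrac12|n_{J}|^{-(1+\frac{\delta}{2})k}\big(e^{2\pi i\langle n_{J},\theta\rangle}+e^{-2\pi i\langle n_{J},\theta\rangle}\big)w_{J}e^{-2\pi i\langle\tilde n^{(J)},\theta\rangle}.
\end{equation*}
The Fourier coefficient at $n_{J}^{*}=n_{J}-\tilde n^{(J)}$ therefore picks up $\tfrac12|n_{J}|^{-(1+\frac{\delta}{2})k}\big(\langle w_{J}\rangle+\widehat{w}_{J}(-2n_{J})\big)$, and $|2n_{J}|\geqslant|\tilde n^{(J)}|$, so the second term is at most $\mathcal{X}_{J}$. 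Applying the second estimate of Lemma~\ref{lemma2.1} with $P=P_{J}$ (so $|a|^{2}\geqslant\frac14\|P_{J}\|^{2}\geqslant\frac14$) gives
\begin{equation*}
|\widehat{T}_{J}(n_{J}^{*})|\geqslant\tfrac18|n_{J}|^{-(1+\frac{\delta}{2})k}\big(\mathcal{Z}_{J}-4\mathcal{X}_{J}\big),
\end{equation*}
where the cross terms from $\widehat{w}(-n)$ and $\widehat{u}(n)$ are again controlled by $\mathcal{X}_{J}$. By \eqref{Zj} and \eqref{Zj01},
\begin{equation*}
\mathcal{Z}_{J}-4\mathcal{X}_{J}\geqslant 2^{-J}\mathcal{Z}_{j_{0}+1}\geqslant 2^{-J}\tfrac{1}{100}\varepsilon_{l_{j_{0}}}^{1/7}\geqslant 2^{-J}\tfrac1{100}\,l_{J}^{-3\tau}.
\end{equation*}
Since $2^{J}$ is only logarithmic in $l_{J}$ and $|n_{J}|\sim 2l_{J}$, this yields $|\widehat{T}_{J}(n_{J}^{*})|\geqslant 2|n_{J}|^{-(1+\frac{\delta}{2})k-3.5\tau}$ for $M_{0}$ large. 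If $J=j_{0}$ (the first resonance), I use \eqref{wpre} and the first estimate of Lemma~\ref{lemma2.1} instead, which give the same bound directly. The error $324l_{J}^{263\tau-(2+\delta)k}$ is much smaller, because $(1+\frac{\delta}{2})k>263\tau$ when $k>361\tau$.

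Next I propagate to $j>J+1$. By Lemma~\ref{6.5}, every step $j\geqslant J+1$ is non-resonant, and $\mathcal{K}\cap\mathcal{A}_{j}$ is either empty or $\{n_{j}\}$ with $|n_{j}|\geqslant1.9l_{j}$. By \eqref{10},
\begin{equation*}
\|A_{j+1}-A_{j}\|\leqslant\|A_{j}\langle\operatorname{Ad}(B^{(j)})V_{j}W\rangle\|+30\|A_{j}\|l_{j}^{299\tau-(2+\delta)k}.
\end{equation*}
The average $\langle\operatorname{Ad}(B^{(j)})V_{j}W\rangle$ only sees the Fourier modes of $W_{j}$ at $\pm n_{j}$ shifted by $\tilde n^{(j)}=n_{J}$. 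Since $|n_{j}\pm n_{J}|\gtrsim l_{j}$, these modes decay like $e^{-2\pi l_{j}h_{j}}\mathcal{M}_{j}\leqslant l_{j}^{-17\tau}|B^{(j)}|^{2}_{h_{j}}$. So each increment is bounded by $l_{j}^{C\tau-(1+\frac{\delta}{2})k}$ with $C\leqslant 200$. Summing over $j\geqslant J+1$ gives a total error of order $l_{J+1}^{200\tau-(1+\frac{\delta}{2})k}=l_{J}^{\frac32(200\tau-(1+\frac{\delta}{2})k)}$. This is $\ll|n_{J}|^{-(1+\frac{\delta}{2})k-3.5\tau}$, because $\frac32(1+\frac{\delta}{2})k-300\tau>(1+\frac{\delta}{2})k+4\tau$ for $k>361\tau$. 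Hence $|b_{j}|\geqslant|b_{J+1}|-\sum\|A_{p+1}-A_{p}\|\geqslant|n_{J}|^{-(1+\frac{\delta}{2})k-3.5\tau}$.

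The main obstacle is the lower bound on $\widehat{T}_{J}(n_{J}^{*})$. It requires keeping track of exactly which Fourier coefficient of $w_{J}$ survives after the phase shift $\tilde n^{(J)}$, and checking that the exponentially decaying factors in $\mathcal{Z}_{J}$ cost only a $\tau$-dependent power of $l_{J}$, here $\varepsilon_{l_{j_{0}}}^{1/7}\geqslant l_{J}^{-3\tau}$, rather than anything depending on $k$.
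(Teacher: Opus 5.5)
Your lower bound for $|b_{J+1}|$ is the paper's argument: Lemma~\ref{lemma2.1} with $P=P_J$, the margin $\mathcal{Z}_J\geqslant10\mathcal{X}_J+2^{-J}\mathcal{Z}_{j_0+1}$ from Lemma~\ref{lemma6.4}, the bound $\varepsilon_{l_{j_0}}^{1/7}\geqslant l_J^{-3\tau}$, and then \eqref{12}. There are two harmless slips here.
\begin{enumerate}
\item The second Fourier coefficient is $\widehat{w}_J(2n_J)$, not $\widehat{w}_J(-2n_J)$.
\item For $V_JW_J$, each of $\widehat{w}(-n_J^*)$ and $\widehat{u}(n_J^*)$ is a sum of two coefficients of $w_J$ or $u_J$. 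So the loss is $7\mathcal{X}_J$ rather than $4\mathcal{X}_J$, which the factor $10$ absorbs.
\end{enumerate}
In the case $J=j_0$, you should also say that $\widehat{\widetilde w}_J(2n_J)$ is controlled by analytic decay, $|\widetilde w_J|_{h_J}e^{-4\pi|n_J|h_J}$. This is needed because $\mathcal{X}_J$ gives no information when $\tilde n^{(J)}=0$.

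The tail step, however, does not close as written. You sum increments $l_j^{C\tau-(1+\frac{\delta}{2})k}$ with $C\leqslant200$ from $j=J+1$. You then claim $\frac32(1+\frac{\delta}{2})k-300\tau>(1+\frac{\delta}{2})k+4\tau$ for $k>361\tau$. That inequality is equivalent to $(1+\frac{\delta}{2})k>608\tau$. So for, e.g., $k=400\tau$ and small $\delta$, your bound $l_J^{\frac32(200\tau-(1+\frac{\delta}{2})k)}$ on the accumulated error is far larger than $|n_J|^{-(1+\frac{\delta}{2})k-3.5\tau}$, and the lower bound on $|b_j|$ is not established.

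Your route can be rescued by keeping the constant your argument actually produces. The bounds $\mathcal{M}_j\lesssim|B^{(j)}|_{h_j}^2\leqslant\varepsilon_{l_j}^{-4}=l_j^{84\tau}$ and the decay factor $l_j^{-17\tau}$ give $C\approx67$. That requires only $(1+\frac{\delta}{2})k>208\tau$ or so.

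The paper avoids estimating these averages altogether. Since $J$ is odd and $\mathcal{K}$ uses only odd indices, $\mathcal{K}\cap\mathcal{A}_{J+1}=\varnothing$. The second case of \eqref{10} then gives $\|A_{J+2}-A_{J+1}\|\leqslant120\,l_{J+1}^{299\tau-(2+\delta)k}$. For $j\geqslant J+2$, the crude bound $12\|A_j\|l_j^{118\tau-(1+\frac{\delta}{2})k}$ from \eqref{n-r-c} suffices, because $l_{J+2}=l_J^{9/4}$. Even the crude bound at $j=J+1$ would close the argument, since $\frac32\big((1+\frac{\delta}{2})k-118\tau\big)>(1+\frac{\delta}{2})k+3.5\tau$ is exactly $(1+\frac{\delta}{2})k>361\tau$.
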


\begin{proof}

By \hyperref[6.5]{Lemma~\ref*{6.5}}, step $J$ is resonant and $n_{J}^{*}=n_{J}-\widetilde{n}^{(J)}$. By the resonant case of \hyperref[GJKDXL]{Theorem~\ref*{GJKDXL}}, there exists $P_{J}\in\mathrm{SU}(1,1)$ such that $P_{J}A_{J}P_{J}^{-1}=\operatorname{diag}(e^{2\pi i\rho_{J}},e^{-2\pi i\rho_{J}})$. Set
\begin{equation}\label{tildeW}
    \widetilde{W}_{J}\coloneqq\operatorname{Ad}(P_{J})W_{J}
    =\begin{pmatrix}
        i\widetilde{u}_{J}&\widetilde{w}_{J}e^{-2\pi i\langle\widetilde{n}^{(J)},\theta\rangle}\\
        \overline{\widetilde{w}_{J}}e^{2\pi i\langle\widetilde{n}^{(J)},\theta\rangle}&-i\widetilde{u}_{J}
    \end{pmatrix}.
\end{equation}
Let $T_{J}$ be the upper-right entry of $V_{J}\widetilde{W}_{J}$. The Fourier expansion of $V_{J}$ gives
\begin{equation}\label{r25}
    \widehat{T}_{J}(n_{J}^{*})=\frac{\langle\widetilde{w}_{J}\rangle+\widehat{\widetilde{w}}_{J}(2n_{J})}{2|n_{J}|^{(1+\frac{\delta}{2})k}}.
\end{equation}
Suppose first that $J>j_{0}$. By \hyperref[lemma2.1]{Lemma~\ref*{lemma2.1}}, \hyperref[lemma6.4]{Lemma~\ref*{lemma6.4}} and \eqref{r25},
\begin{equation*}
\begin{aligned}
    |\widehat{T}_{J}(n_{J}^{*})|
    &\geqslant\frac{|(P_{J})_{11}|^{2}(\mathcal{Z}_{J}-3\mathcal{X}_{J})-\|P_{J}\|^{2}\mathcal{X}_{J}}{2|n_{J}|^{(1+\frac{\delta}{2})k}}\\
    &\geqslant\frac{3\mathcal{Z}_{J}}{20|n_{J}|^{(1+\frac{\delta}{2})k}}
    \geqslant|n_{J}|^{-(1+\frac{\delta}{2})k-3.2\tau}.
\end{aligned}
\end{equation*}
Here  we  used $|(P_{J})_{11}|\geqslant1$, $\|P_{J}\|^{2}\leqslant4|(P_{J})_{11}|^{2}$ and $\mathcal{Z}_{J}\geqslant10\mathcal{X}_{J}$. The last inequality follows from \eqref{Zj01} and \eqref{Zj}, which give $\mathcal{Z}_{J}\geqslant(100\cdot2^{J})^{-1}l_{J}^{-3\tau}.$

If $J=j_{0}$, then $\widetilde{n}^{(J)}=0$ and $n_{J}^{*}=n_{J}$. By \eqref{wpre}, the bound on $P_{J}$ in the resonant case of \hyperref[GJKDXL]{Theorem~\ref*{GJKDXL}}, and \hyperref[lemma2.1]{Lemma~\ref*{lemma2.1}}, we obtain
\begin{equation*}
|\langle\widetilde{w}_{J}\rangle|\geqslant\frac{1}{100}\varepsilon_{l_{J}}^{1/7}, \qquad |\widetilde{w}_{J}|_{h_{J}}\leqslant6\varepsilon_{l_{J}}^{-1/7}.
\end{equation*}
Thus, by  \eqref{r25},
\begin{equation*}
\begin{aligned}
    |\widehat{T}_{J}(n_{J}^{*})|
    &\geqslant\frac{1}{2|n_{J}|^{(1+\frac{\delta}{2})k}}
    \bigg(\frac{1}{100}\varepsilon_{l_{J}}^{1/7}-6\varepsilon_{l_{J}}^{-1/7}e^{-4\pi|n_{J}|h_{J}}\bigg)\\
    &\geqslant|n_{J}|^{-(1+\frac{\delta}{2})k-3.2\tau}.
\end{aligned}
\end{equation*}
In either case, \eqref{12} in \hyperref[GJKDXL]{Theorem~\ref*{GJKDXL}} yields
\begin{equation*}
\begin{aligned}
    |b_{J+1}|
    &\geqslant|\widehat{T}_{J}(n_{J}^{*})|-324l_{J}^{263\tau-(2+\delta)k}\\
    &\geqslant|n_{J}|^{-(1+\frac{\delta}{2})k-3.2\tau}-324l_{J}^{263\tau-(2+\delta)k}\\
    &\geqslant(1+2^{-J-1})|n_{J}|^{-(1+\frac{\delta}{2})k-3.5\tau}.
\end{aligned}
\end{equation*}
By \hyperref[6.5]{Lemma~\ref*{6.5}}, every step after $J$ is non-resonant. Since $\mathcal{K}\cap\mathcal{A}_{J+1}=\varnothing$ by construction, applying \eqref{10} at step $J+1$ and \eqref{n-r-c} thereafter gives
\begin{equation}\label{tail}
\begin{aligned}
    \sum_{j=J+1}^{\infty}\|A_{j+1}-A_{j}\|
    &\leqslant120l_{J+1}^{299\tau-(2+\delta)k}
    +48\sum_{j=J+2}^{\infty}l_{j}^{118\tau-(1+\frac{\delta}{2})k}\\
    &\leqslant2^{-J-1}|n_{J}|^{-(1+\frac{\delta}{2})k-3.5\tau}.
\end{aligned}
\end{equation}
The last inequality follows from $k>361\tau$, $l_{j+1}=l_{j}^{3/2}.$
Consequently, for every $j\geqslant J+1$,
\begin{equation}
\label{bl}
    |b_{j}|\geqslant|b_{J+1}|-\sum_{r=J+1}^{\infty}\|A_{r+1}-A_{r}\|
    \geqslant|n_{J}|^{-(1+\frac{\delta}{2})k-3.5\tau}.
\end{equation}
By \eqref{tail}, $A_{\infty}\coloneqq\lim_{j\to\infty}A_{j}$ exists, and its upper-right entry satisfies the same lower bound.
\end{proof}

We can now complete the proof of \hyperref[r26]{Lemma~\ref*{r26}}.

\begin{proof}[Proof of {\hyperref[r26]{Lemma~\ref*{r26}}}]
At every resonant step $r$, the resonance condition, the Diophantine condition and $|n_{r}^{*}|\leqslant l_{r+1}=l_{r}^{3/2}$ give
\begin{equation*}
    \operatorname{dist}(2\rho_{r},\mathbb{Z})\geqslant\gamma|n_{r}^{*}|^{-\tau}-l_{r}^{-3\tau}\geqslant\frac{\gamma}{2}|n_{r}^{*}|^{-\tau}.
\end{equation*}
 Applying \eqref{pd} at the resonant step $r$ with $\|A_{r}\|\leqslant4$, 
\begin{equation}\label{r29}
    \|P_{r}\|^{2}\leqslant C\frac{\|A_{r}\|}{|\sin(2\pi\rho_{r})|}\leqslant C\gamma^{-1}|n_{r}^{*}|^{\tau}.
\end{equation}
By \hyperref[6.5]{Lemma~\ref*{6.5}}, the last resonant step is $J$, with $n_{J}^{*}=n_{J}-\widetilde{n}^{(J)}$. By the resonant case of \hyperref[GJKDXL]{Theorem~\ref*{GJKDXL}}, the step following a resonant step is non-resonant. Consequently, the bounds on the resonant vectors give
\begin{equation}\label{rv}
\begin{aligned}
    |n_{J}^{*}|&\leqslant2n, \qquad
    \sum_{\substack{r\leqslant J\\r\text{ resonant}}}|n_{r}^{*}|\leqslant Cn,\\
    \sum_{\substack{r<J\\r\text{ resonant}}}\ln|n_{r}^{*}|
    &\leqslant\sum_{a=1}^{\infty}\bigg(\frac{2}{3}\bigg)^{2a-1}\ln l_{J}
    =\frac{6}{5}\ln l_{J}.
\end{aligned}
\end{equation}

Set $s_{J}\coloneqq(2\pi n)^{-1}$. By \eqref{r13}, $h_{J+1}<s_{J}<h_{J}$. Every factor in $B^{(J+1)}=B_{J}\cdots B_{1}$ is holomorphic on $\mathcal{S}_{h_{J}}$. Using $|Q_{n_{r}^{*}}|_{s_{J}}\leqslant2e^{\pi s_{J}|n_{r}^{*}|}$, together with \eqref{n-r-c}, \eqref{r19}, \eqref{r29} and \eqref{rv}, we obtain
\begin{equation}\label{bs}
    |B^{(J+1)}|_{s_{J}}\leqslant C^{J+1}(2n)^{\tau/2}l_{J}^{3\tau/5}\leqslant n^{\frac{23}{20}\tau}.
\end{equation}
The last inequality follows from $C^{J+1}2^{\tau/2}\leqslant n^{\tau/20}$, ensured uniformly in $J$ by the choice of $M_{0}$ and $\ln l_{J}=(3/2)^{J}\ln M_{0}$.

By \hyperref[6.5]{Lemma~\ref*{6.5}}, every step after $J$ is non-resonant. Thus \eqref{n-r-c} and \eqref{bs} give $|B^{(j)}|_{h_{j}}\leqslant2n^{\frac{23}{20}\tau}$ for $j\geqslant J+1$. Applying the Fourier Cauchy estimate, we obtain
\begin{equation}\label{bd}
\begin{aligned}
    \|B^{(j+1)}-B^{(j)}\|_{q}
    &\leqslant C_{q}h_{j}^{-q}|B^{(j+1)}-B^{(j)}|_{h_{j}}\\
    &\leqslant C_{q}n^{\frac{23}{20}\tau}l_{j}^{q+127\tau-(1+\frac{\delta}{2})k}.
\end{aligned}
\end{equation}
Since $q<(1+\frac{\delta}{2})k-127\tau$, \eqref{bd} implies that $B^{(j)}$ converges in $C^{q}$ to a projectively $\mathbb{T}^{d}$-periodic map $B^{(\infty)}$. By \eqref{bs}, \eqref{bd} and the Cauchy estimate on $\mathcal{S}_{s_{J}}$,
\begin{equation}\label{bq}
\begin{aligned}
    \|B^{(\infty)}\|_{q}
    &\leqslant C_{q}s_{J}^{-q}|B^{(J+1)}|_{s_{J}}
    +\sum_{j=J+1}^{\infty}\|B^{(j+1)}-B^{(j)}\|_{q}\\
    &\leqslant C_{q}n^{q+\frac{23}{20}\tau}
    \leqslant n^{q+\frac{6}{5}\tau}.
\end{aligned}
\end{equation}
Since only finitely many integers $q$ are admissible, the initial scale can be fixed so that these bounds hold simultaneously for all such $q$, independently of $J$.

By \eqref{tail}, $A_{\infty}\coloneqq\lim_{j\to\infty}A_{j}$ exists. Passing to the limit in the conjugacy identities \eqref{r14}, using the perturbation bound in \eqref{rho-rho}, gives
\begin{equation}\label{lc}
    B^{(\infty)}(\theta+\alpha)MS_{E}^{V}(\theta)M^{-1}B^{(\infty)}(\theta)^{-1}=A_{\infty}.
\end{equation}

Write $b_{\infty}\coloneqq(A_{\infty})_{12}$. The resonant normal form and \eqref{r19} give $|b_{J+1}|\leqslant Cl_{J}^{121\tau-(1+\frac{\delta}{2})k}$. Together with \eqref{tail} and \hyperref[bl]{Lemma~\ref*{bl}}, this yields
\begin{equation}\label{bi}
    n^{-(1+\frac{\delta}{2})k-\frac{7}{2}\tau}\leqslant|b_{\infty}|\leqslant Cl_{J}^{121\tau-(1+\frac{\delta}{2})k}.
\end{equation}
By \hyperref[6.5]{Lemma~\ref*{6.5}}, the reduced rotation number is zero. Since $E\in\Sigma_{V,\alpha}$, $A_{\infty}$ is not hyperbolic. The resonant normal form and \eqref{tail} also imply that $A_{\infty}$ is close to $I$. Together with \eqref{bi}, these facts show that $A_{\infty}$ is a nontrivial parabolic matrix.

Choose $R\in\mathrm{SO}(2,\mathbb{R})$ such that
\begin{equation}\label{sh}
    R^{-1}M^{-1}A_{\infty}MR=\begin{pmatrix}1&\zeta\\0&1\end{pmatrix}, \qquad |\zeta|=2|b_{\infty}|.
\end{equation}
Set $B(\theta)\coloneqq M^{-1}B^{(\infty)}(\theta)^{-1}MR$. Then \eqref{lc} and \eqref{sh} give \eqref{r27}, and $B$ is projectively $\mathbb{T}^{d}$-periodic. Since inversion on determinant-one matrices is the linear adjugate map in dimension two, $\|B\|_{q}=\|B^{(\infty)}\|_{q}$, so \eqref{bq} gives the required norm bounds. Moreover, \eqref{bi}, \eqref{sh} and $1.9l_{J}<n<2l_{J}$ imply
\begin{equation}\label{zb}
    n^{-(1+\frac{\delta}{2})k-\frac{7}{2}\tau}\leqslant|\zeta|\leqslant n^{122\tau-(1+\frac{\delta}{2})k}.
\end{equation}
Since $q_{0}<(1+\frac{\delta}{2})k-127\tau$, the same limiting map belongs to $C^{q_{0}}$ by \eqref{bd}. Thus \eqref{is}, \eqref{bq} and \eqref{zb} give
\begin{equation}\label{zs}
    K^{2}|\zeta|\|B\|_{q_{0}}^{4}\leqslant K^{2}n^{147\tau-(1+\frac{\delta}{2})k}\leqslant c_{0},\qquad 0<|\zeta|\leqslant\frac{1}{4}.
\end{equation}
Since $E=E_{m_{J}}^{+}$, \hyperref[r37]{Remark~\ref*{r37}} and \eqref{zs} imply $\zeta>0$.

 This completes the proof.
\end{proof}

\section*{Acknowledgments}
 Ao Cai was supported by the National Natural Science Foundation of China (Grant Nos. 12671227 and 12271091) and grant JSTJ-2025-674. Jiawei He was supported by the National Natural Science Foundation of China (Grant No. 12501247), the Natural Science Foundation of Fujian Province (Grant No. 2025J08232), the Startup Fund for Advanced Talents of Putian University (Grant No. 2023120), and the Fujian Alliance of Mathematics (Grant No. 2025SXLMMS07).

\section*{Statements and Declarations}
{\bf Conflict of Interest} 
The authors declare no conflicts of interest.
				
\vspace{0.2in}
{\bf Data Availability}
Data sharing is not applicable to this article as no new data were created or analyzed in this study.

\raggedbottom

\bibliographystyle{plain}
\bibliography{references}

\end{document}